\numberwithin{equation}{section}
\newtheorem{thm}{Theorem}[section]
\newtheorem*{thm*}{Theorem}
\newtheorem{cor}[thm]{Corollary}
\newtheorem{lem}[thm]{Lemma}
\newtheorem{prop}[thm]{Proposition}
\theoremstyle{definition}
\newtheorem{defn}[thm]{Definition}
\theoremstyle{remark}
\newtheorem{rem}[thm]{Remark}
\theoremstyle{example}
\newtheorem{exa}[thm]{Example}
\theoremstyle{conjecture}
\numberwithin{equation}{section}
\newcommand{\norm}[1]{\left\Vert#1\right\Vert}
\newcommand{\abs}[1]{\left\vert#1\right\vert}
\newcommand{\re}{{\rm Re}\,}
\newcommand{\ra}{{\rm Im}\,}
\newcommand{\zer}{{\rm Zero}\,}
\newcommand{\inner}[2]{\left\langle {#1}, {#2} \right\rangle}
\newcommand{\calB}{\mathcal B}
\newcommand{\calC}{\mathcal C}
\newcommand{\calF}{\mathcal F}
\newcommand{\calH}{\mathcal H}
\newcommand{\calJ}{\mathcal J}
\newcommand{\calQ}{\mathcal Q}
\newcommand {\A} {\mathbb A}
\newcommand {\B} {\mathbb B}
\newcommand {\C} {\mathbb C}
\newcommand {\bS} {\mathbb S}
\newcommand {\R} {\mathbb R}
\newcommand {\Z} {\mathbb Z}
\newcommand {\M} {\mathbb M}
\begin{document}

\title[]{Multi-valued weighted composition operators on Fock space}

\author{Pham Viet Hai}%
\address[P. V. Hai]{Faculty of Mathematics, Mechanics and Informatics, University of Science, Vietnam National University, Hanoi.}
\email{phamviethai86@gmail.com}

\author{Mihai Putinar}
\address[M. Putinar]{University of California at Santa Barbara, and
Newcastle University, UK.}
\email{mputinar@math.ucsb.edu, mihai.putinat@ncl.ac.uk}

\subjclass[2010]{Primary: 05C??. Secondary: 05C??}

\keywords{Weighted composition operator, multi-valued operator, Fock space, Segal-Bargmann space, complex symmetry}

\begin{abstract}
Multivalued linear operators, also known as linear relations, are studied on a specific class of weighted, composition transforms on Fock space. Basic properties of this class of linear relations, such as
closed graph, boundedness, complex symmetry, real symmetry, or isometry are characterized in simple algebraic terms, involving their symbols.
\end{abstract}

\maketitle

\section{Introduction}
The definition of the adjoint of a closed graph linear operator, not necessarily having dense domain of definition, encounters the uninviting observation that it is multi-valued. A natural way to circumvent this obstacle is to accept multi-valued operators whose graph is simply a linear subspace. It was von Neumann himself who laid the foundations of this necessary generalization
 \cite{vonNeumann-1932,vonNeumann-1950}. The clear benefits of the new concept soon bear fruit, first in the study of non-standard boundary value problems for differential equations \cite{Coddington-1973}, and second in abstract duality theory \cite{Arens-1961}. Without entering through the front gate in modern linear analysis, the theory of multi-valued linear operators has reached maturity \cite{Cross-1998,Lee-Nashed-1990,ABJT-2009} offering solid support for a variety of applications. The article \cite{Langer-textorius-1977} stands aside by clarity and depth.

A second source of our investigation is the emerging theory of complex symmetric linear operators. The second author has contributed at isolating a theoretical framework for this class of Hilbert space transforms motivated by particular phenomena arising in function theory, matrix analysis and mathematical physics \cite{GP1,GP2,garcia2014mathematical}. 

We recall the concept of complex symmetry for \emph{single-valued} linear operators.
\begin{defn}\label{defn-cso-conj}
Let $\calC$ be a conjugation (i.e. anti-linear isometric involution) on a separable, complex Hilbert space $\calH$. A closed, densely defined, single-valued linear operator $T\colon\text{dom}(T)\subseteq\calH \to \calH$ is called
\begin{enumerate}
\item \emph{$\calC$-symmetric} if its graph is contained in the graph of $\calC T^*\calC$;
\item \emph{$\calC$-selfadjoint} if $T=\calC T^*\calC$.
\end{enumerate}
\end{defn}

Examples abound:  hermitian operators, normal operators, Hardy space model operators, Toeplitz and Hankel finite matrices, Jordan forms. In addition, a class of linear operators relevant in non-hermitian quantum mechanics obeying a parity and time symmetry, so called $\mathcal{PT}$-symmetry, fits into the same category.  The last two decades witnessed a tremendous activity aimed at unveiling the spectral analysis of $\mathcal{PT}$-symmetric operators \cite{Bender-1999,Znojil-1999,Znojil-2017,Bender-2018}. To be more precise, $\mathcal{PT}$-symmetric operators are those operators on Lebesgue space $L^2(\R)$ which are complex symmetric with respect to the canonical conjugation $\mathcal{PT}f(x)=\overline{f(-x)}$. 

The favorite models for linear transforms come from to multipliers or composition operators acting on Hilbert spaces of holomorphic functions. In this respect, a natural investigation of complex symmetric transforms was undertaken in a series of recent works \cite{GH,JKKL}. More precisely \emph{bounded} weighted composition operators
\begin{equation}\label{wco-formula}
W_{\psi,\varphi}f=\psi\cdot f\circ\varphi
\end{equation}
which are complex symmetric on Hardy spaces with respect to the standard conjugation $\calJ f(z)=\overline{f(\overline{z})}$ were elucidated in the cited articles. 

Fock space, also known as Segal-Bargmann space of entire functions in the complex plane which are square summable with respect to the Gaussian weight is distinguished by the fact that the adjoint of multiplication by the variable is the derivative operation. For this simple reason, Fock space is the preferred ground for quantum mechanics, signal processing and micro-local analysis. Linear transforms on Fock space were throughly studied, with much benefit for all applied ramifications \cite{KZ}.

Exploiting the structure of the conjugation $\calJ$, the first author classified \emph{weighted composition conjugations} acting on Fock space \cite{HK1}. These are the conjugations entering into the present article. They are defined as follows: for complex constants $a$, $b$, $c$ satisfying
\begin{equation}\label{abc-cond}
|a|=1, \quad \bar{a}b+\bar{b}=0, \quad |c|^2e^{|b|^2}=1,
\end{equation}
the \emph{weighted composition conjugation} is defined by
\begin{equation}\label{Ca,b,c-wcc}
\calC_{a,b,c}f(z):=ce^{bz}\overline{f\left(\overline{az+b}\right)},\quad f\in\calF^2.
\end{equation}
The operators $\calC_{a,b,c}$, given by \eqref{Ca,b,c-wcc}, with parameters subject to \eqref{abc-cond} are really conjugations in the sense explained in Definition \ref{defn-cso-conj}. In \cite{hai2018complex} the authors realized the canonical conjugation $\mathcal{PT}$ as $\calC_{-1,0,0}$, thus establishing a direct link between $\mathcal{PT}$-symmetry and $\calC_{a,b,c}$-symmetry on Fock space. See also \cite{hai2018unbounded} where the first author characterized $\calC_{a,b,c}$-selfadjointness of \emph{unbounded} weighted composition operators. In addition, hermitian, unitary or normal weighted composition operators are all $\calC_{a,b,c}$-selfadjoint, with respect to an adapted choice of constants $a,b,c$.

It is the goal of this paper to describe complex symmetric multi-valued weighted composition operators on Fock space.  While this might look a very narrow and technical endeavor, the complete picture offered by our study is a sign of well-posedness, with possible applications beyond the mere computational challenge. The linear relations occupying the present article can be cast in the equation
\begin{equation}\label{subspace-wco}
\psi\cdot f\circ\varphi=\phi\cdot g^{(m)},
\end{equation}
where $\psi,\varphi,\phi$ are fixed entire functions and $m\geq 0$ is an integer. Associated to equation \eqref{subspace-wco} is the \emph{maximal multi-valued weighted composition operator} in the following precise terms:
\begin{eqnarray*}
\text{dom}(\bS_{\max})
&=&\{f\in\calF^2:\,\text{there exists $g\in\calF^2$ such that}\\
&&\text{$(f,g)$ satisfies equation \eqref{subspace-wco}}\},
\end{eqnarray*}
$$
\bS_{\max}(f)=\{g\in\calF^2:\,\text{the pair $(f,g)$ satisfies equation \eqref{subspace-wco}}\}.
$$
This operator is ``maximal" in the sense that it cannot be extended as an operator in $\calF^2$ generated by equation \eqref{subspace-wco}. Usually, the maximal domain is too big and we choose as a domain a subset of $\text{dom}(\bS_{\max})$. In this view, an operator is called a \emph{non-maximal multi-valued weighted composition operator} if its graph is contained in the graph of $\bS_{\max}$. 

The article is organized as follows. Section \ref{basic-sec} is devoted to recalling basic properties of multi-valued operators and Fock spaces. In Section \ref{rins} some properties of a multi-valued weighted composition operator in Fock space $\calF^2$ are discussed: dense domain, boundedness in the sense of multi-valued operators, closed graph, computation of the adjoint. We characterize in Section \ref{ajfmzuoqr} multi-valued weighted composition operators which are $\calC$-selfadjoint with respect to a weighted composition conjugation (or simply $\calC_{a,b,c}$-selfadjoint). In Section \ref{sec-her}, a similar computation is done for selfadjoint operators in the classical sense (or simply hermitian). Our main results indicate that the $\calC_{a,b,c}$-selfadjointness or hermitian property cannot be separated from the maximality of the domain. It should be noted that the class of complex symmetric operators obtained in this paper is both \emph{multi-valued} and \emph{unbounded}, and more interestingly it contains \emph{properly} single-valued operators studied in \cite{hai2018unbounded}. Although larger than the single-valued setting of  \cite{hai2018unbounded}, multi-valued operators appearing in Sections \ref{ajfmzuoqr} and \ref{sec-her} inherit similar properties. Such as: their domains are \emph{never} equal to the whole Fock space when $m\geq 1$ (see Proposition \ref{dom-Smax} for a detailed explanation). In addition, the case $m=0$ treated in \cite{hai2016boundedness} shows that for $\psi(z)=Ce^{Dz}$ and $\varphi(z)=Az+B$, relation $\text{dom}(W_{\psi,\varphi})=\calF^2$ is valid if and only if:
\begin{equation*}
\begin{cases}
\text{either $|A|<1$},\\
\text{or $|A|=1$, $A\overline{B}+D=0$}.
\end{cases}
\end{equation*}
In the last section we prove that a unitary weighted composition operator must necessarily be single-valued; in other words, $m=0$.

We dedicate this work to Franek Szafraniec, master of unbounded subnormality. His works touched both multi-valued linear operators \cite{HSS-2009}, as well as spectral analysis on Fock space
\cite{Szafraniec-2000,Szafraniec-2003}.

\section{Preliminaries}\label{basic-sec}
The present section introduces some notation and recalls basic concepts related to multi-valued operators and Fock space.

Let $\Z$ be the set of integers and $\Z_{\geq q}=\{m\in\Z:m\geq q\}$. The symbol $\C_p[z]$ indicates the set of all polynomials of degree at most $p$. Let $2^\calH$ be the collection of nonempty subsets of $\calH$. The symbol $\text{\bf Clo}(S)$ indicates the closure of a set $S$. For an entire function $\phi(\cdot)$, we denote by $\zer(\phi)$ its zero set. For $\alpha\in\zer(\phi)$, $\text{ord}(\alpha,\phi)$ stands for the order of the zero $\alpha$ (i.e. $\phi$ is of form $\phi(x)=(x-\alpha)^{\text{ord}(\alpha,\phi)}\phi_*(x)$, where $\phi_*(\alpha)\ne 0$). Regardless to mention that the set $\zer(\phi)$ is closed and discrete for a non-trivial $\phi$.

\subsection{Multi-valued operators}

We begin by reviewing the concept of multi-valued operator and provide by a few examples.
\begin{defn}
A \emph{multi-valued linear operator} $\A$ is a mapping from a subspace $\text{dom}(\A)\subset\calH$, called the \emph{domain} of $\A$, into $2^\calH$ such that
$$
\A(\alpha_1 x_1+\alpha_2 x_2)=\alpha_1\A(x_1)+\alpha_2\A(x_2),\quad\forall\alpha_1,\alpha_2\in\C,x_1,x_2\in\text{dom}(\A).
$$
If $\A(0)=\{0\}$, $\A$ is called the \emph{single-valued linear operator}. The \emph{range} of $\A$ is denoted:
$$
\ra(\A)=\{y\in\calH:\,\text{there exists $x\in\text{dom}(\A)$ such that $y\in\A(x)$}\}.
$$
\end{defn}

\begin{exa}
The simplest naturally occurring examples of a multi-valued operators are the inverse, closure, completion and adjoint of single-valued operators.
\end{exa}

\begin{rem}
Similar to a single-valued linear transform, a multi-valued linear operator $\A$ is determined by its \emph{graph}:
$$
G(\A):=\{(x,y)\in\calH\times\calH:x\in\text{dom}(\A),y\in\A(x)\}.
$$
Further on, denote by $\A^{-1}$ the flip operation:
$$
G(\A^{-1}):=\{(x,y):(y,x)\in G(\A)\}.
$$
For any multi-valued linear operator $\A$ on $\calH$, $\A(0)$ is a linear subspace of $\calH$ and for any $x\in\text{dom}(\A)$, $\A(x)$ is an affine space, that is a parallel translation of $\A(0)$: $\A(x)=y_0+\A(0)$ for any $y_0\in\A(x)$.
\end{rem}

\begin{defn}\label{S-adjoint}
Let $\A$ be a multi-valued linear operator and $S$ be a (either linear or anti-linear) bounded, single-valued linear operator. Its \emph{$S$-adjoint}, denoted as $\A^*_S$, is defined by
$$
G(\A^*_S)=\{(u,v)\in\calH\times\calH:\langle g,Su\rangle=\langle f,Sv\rangle,\quad\forall (f,g)\in G(\A)\}.
$$
To simplify notation we write $\A^*$ in the case when $S$ is the identity operator  on $\calH$.
\end{defn}

\begin{rem}\label{equi-AC*=CAC}
Let $\calC$ be a conjugation and $\A:\text{dom}(\A)\subset\calH\to 2^\calH$. It was already indicated in \cite[Lemma 2.2]{sun2013j} that $G(\A_\calC^*)=\{(\calC x,\calC y):(x,y)\in G(\A^*)\}$.
\end{rem}

For a closed subspace $E$ of $\calH$, the quotient space
\begin{equation}\label{quo-spa}
\calH/E=\{[x]:x\in\calH\},\quad [x]=\{x\}+E
\end{equation}
is a Hilbert space with the inner product
$$
\inner{[x]}{[y]}=\inner{x_1}{y_1},
$$
where $x=x_0+x_1$, $y=y_0+y_1$ with $x_0,y_0\in E$ and $x_1,y_1\in E^\perp$. The norm induced by this inner product is precisely
\begin{equation}\label{norm-quo-spa}
\norm{[x]}=\inf\{\norm{x-y}:y\in E\}.
\end{equation}
Define the following natural quotient map
\begin{equation}\label{quo-map}
\calQ_E:\calH\to\calH/E,\quad\calQ_E(x)=[x].
\end{equation}
For a multi-valued operator $\A$, we write $\calQ_{\A}$ instead of $\calQ_{\text{\bf Clo}(\A(0))}$ and define the following operator
\begin{equation}\label{bfA=QA}
\mathbf A:\text{dom}(\A)\to\calH/E,\quad\mathbf A=\calQ_{\A}\A;
\end{equation}
namely, for $x\in\text{dom}(\A)$, $\mathbf A(x)=[y]$, where $y\in\A(x)$.

\begin{defn}
Let $x\in\text{dom}(\A)$ and define
\begin{equation}\label{norm-A(x)}
\|\A(x)\|=\|\mathbf A(x)\|
\end{equation}
The \emph{norm} of $\A$ is defined as expected:
\begin{equation}\label{norm-A}
\|\A\|=\|\mathbf A\|.
\end{equation}
\end{defn}

\begin{rem}\label{rem-bdd}
Remark that formula \eqref{norm-A} is not separating points, since, in general $\|\A\|=0$ does not imply $\A=0$ (see \cite{cross2002multivalued}).
\end{rem}

\begin{defn}
A multi-valued linear operator $\A:\text{dom}(\A)\subset\calH\to 2^\calH$ is called
\begin{enumerate}
\item \emph{bounded} if $\|A\|<\infty$.
\item \emph{continuous at $x\in\text{dom}(\A)$} if the inverse image of any neighbourhood of $\A(x)$ is a neighbourhood of $x$.
\item \emph{continuous} if it is continuous at every point in its domain.
\item \emph{injective} if $x\ne y$ implies $\A(x)\cap\A(y)=\emptyset$.
\item \emph{surjective} if for every $x\in\calH$ there exists $y\in\text{dom}(\A)$ such that $x\in\A(y)$.
\item \emph{closed} if its graph $G(\A)$ is closed.
\item \emph{$\calC$-selfadjoint} if $G(\A_{\calC}^*)=G(\A)$.
\item \emph{hermitian} if $G(\A^*)=G(\A)$.
\item \emph{unitary} if $G(\A^*)=G(\A^{-1})$.
\end{enumerate}
\end{defn}

\begin{prop}[{\cite[Proposition 3.1]{cross2002multivalued}}]
The operator $\A:\text{dom}(\A)\subset\calH\to 2^\calH$ is continuous if and only if it is bounded.
\end{prop}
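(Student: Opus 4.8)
The plan is to transfer the whole question to the single-valued linear operator $\mathbf A=\calQ_\A\A\colon\text{dom}(\A)\to\calH/E$, where $E=\text{\bf Clo}(\A(0))$. Since by definition $\|\A\|=\|\mathbf A\|$, boundedness of $\A$ means exactly that $\mathbf A$ is a bounded linear operator; and for single-valued linear operators between normed spaces boundedness is classically equivalent to continuity. So the real content to establish is the pointwise equivalence that $\A$ is continuous at $x$ if and only if $\mathbf A$ is continuous at $x$, for each $x\in\text{dom}(\A)$; one then quantifies over $x$. Throughout, ``inverse image'' is read in the multi-valued sense $\A^{-1}(W)=\{x\in\text{dom}(\A):\A(x)\cap W\neq\emptyset\}$, and neighbourhoods of $x$ are taken relative to $\text{dom}(\A)$, consistently with $\A^{-1}(W)\subseteq\text{dom}(\A)$.

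For ``$\mathbf A$ continuous at $x\Rightarrow\A$ continuous at $x$'', I would take an open neighbourhood $V$ of $\A(x)$ and push it through the quotient map: since $\calQ_E$ is continuous and open, $\calQ_E(V)$ is an open neighbourhood of $\mathbf A(x)$ in $\calH/E$, so $\mathbf A^{-1}(\calQ_E(V))\supseteq B(x,\delta)\cap\text{dom}(\A)$ for some $\delta>0$. For such an $x'$ and a choice of $y'\in\A(x')$, one has $[y']\in\calQ_E(V)$, hence some $v\in V$ with $v-y'\in E$; then $v\in y'+E=\text{\bf Clo}(\A(x'))$, and since $V$ is open it meets $\A(x')$. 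This yields $B(x,\delta)\cap\text{dom}(\A)\subseteq\A^{-1}(V)$. For the converse, given an open neighbourhood $U$ of $\mathbf A(x)=[y_0]$ in $\calH/E$, I would set $V=\calQ_E^{-1}(U)$; this is open and contains the fibre $y_0+E\supseteq\A(x)$, so continuity of $\A$ at $x$ gives $\A^{-1}(V)\supseteq B(x,\delta)\cap\text{dom}(\A)$, and picking $v\in\A(x')\cap V$ for $x'$ in this ball gives $\mathbf A(x')=[v]\in\calQ_E(V)=U$ by surjectivity of $\calQ_E$. A small observation that streamlines the end: by linearity of $\A$ (so $\A(z+x)\supseteq\A(z)+\{y_0\}$ whenever $y_0\in\A(x)$), continuity at one point forces continuity everywhere, and likewise for $\mathbf A$, so it actually suffices to run the argument at $x=0$.

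The step I expect to be the main obstacle --- and the only point where the passage from $\A(0)$ to its closure $E$ really matters --- is the first implication above: an open neighbourhood of $\A(x)$ generally fails to contain the affine subspace $y_0+E$, so one cannot simply ``lift $E$''; instead one uses that an open set which meets the larger set $y'+E=\text{\bf Clo}(y'+\A(0))$ necessarily meets $y'+\A(0)=\A(x')$ itself. Once this is in place, the remainder is the routine interplay between $\calQ_E$ and the quotient norm \eqref{norm-quo-spa}, together with the textbook equivalence of boundedness and continuity for single-valued linear maps.
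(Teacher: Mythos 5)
The paper offers no proof of this proposition --- it is imported verbatim from Cross's monograph --- so there is nothing internal to compare against; judged on its own, your argument is correct and is essentially the canonical one, since the paper's very definition of $\|\A\|$ via $\mathbf A=\calQ_{\A}\A$ already points to reducing everything to the single-valued quotient operator. Your handling of the one genuinely delicate step is right: an open set meeting the coset $y'+E=\text{\bf Clo}(\A(x'))$ must already meet $\A(x')$ itself, which is exactly what bridges the gap between $\A(0)$ and its closure; the rest (openness and continuity of $\calQ_E$, translation by linearity, the classical boundedness--continuity equivalence for single-valued linear maps) is routine. The only caveat is interpretive rather than mathematical: the paper never spells out what ``inverse image'' and ``neighbourhood of $\A(x)$'' mean, and your reading ($\A^{-1}(W)=\{x\in\text{dom}(\A):\A(x)\cap W\neq\emptyset\}$, neighbourhoods relative to $\text{dom}(\A)$) is the one under which the statement is true and agrees with the cited source, so it is worth stating explicitly, as you did.
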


For $u\in\calH$, we introduce the mapping
\begin{equation}
\vartheta_u(f)=\inner{g}{u},\quad g\in\A(f).
\end{equation}

\begin{prop}[{\cite[Proposition 2.6.3]{wilcox2002multivalued}}]
Given a multi-valued operator $\A$, its adjoint $\A^*$ is always closed and
$$
\text{dom}(\A^*)=\{u\in\calH:\,\text{$\vartheta_u$ is continuous and single-valued}\}.
$$
\end{prop}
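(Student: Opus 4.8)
The plan is to establish the two assertions by elementary Hilbert-space arguments, handling them independently. For the closedness of $\A^*$, I would rewrite the defining relation of its graph as an intersection: from Definition \ref{S-adjoint} with $S$ the identity,
$$
G(\A^*)=\bigcap_{(f,g)\in G(\A)} N_{f,g},\qquad N_{f,g}=\set{(u,v)\in\calH\times\calH:\inner{g}{u}=\inner{f}{v}}.
$$
Each $N_{f,g}$ is the kernel of the bounded functional $(u,v)\mapsto\inner{g}{u}-\inner{f}{v}$ on $\calH\times\calH$, hence a closed subspace; an arbitrary intersection of closed subspaces is a closed subspace, so $G(\A^*)$ is closed, i.e.\ $\A^*$ is a closed multi-valued operator.

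For the equality of domains I would prove two inclusions. If $u\in\text{dom}(\A^*)$, choose $v$ with $(u,v)\in G(\A^*)$; then $\inner{g}{u}=\inner{f}{v}$ for every $(f,g)\in G(\A)$. Since the right-hand side does not depend on the choice of $g\in\A(f)$, the set $\vartheta_u(f)$ collapses to the single number $\inner{f}{v}$, so $\vartheta_u$ is single-valued, and from $|\vartheta_u(f)|=|\inner{f}{v}|\le\norm{v}\,\norm{f}$ it is bounded, hence continuous on $\text{dom}(\A)$ with the subspace topology.

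Conversely, suppose $\vartheta_u$ is single-valued and continuous. Writing $\A(f)=g_0+\A(0)$, single-valuedness of $\vartheta_u$ at one (hence every) point $f$ forces $\inner{h}{u}=0$ for all $h\in\A(0)$, i.e.\ $u\perp\A(0)$; once this holds, the linearity of $\A$ transfers (by choosing representatives) to linearity of the now genuine functional $\vartheta_u$ on the subspace $\text{dom}(\A)$. Continuity of a linear functional is boundedness, so $|\vartheta_u(f)|\le C\norm{f}$; extending $\vartheta_u$ by continuity to $\text{\bf Clo}(\text{dom}(\A))$ and invoking the Riesz representation theorem yields $v\in\calH$ with $\vartheta_u(f)=\inner{f}{v}$ for all $f\in\text{dom}(\A)$, whence $\inner{g}{u}=\inner{f}{v}$ for every $(f,g)\in G(\A)$, that is $(u,v)\in G(\A^*)$ and $u\in\text{dom}(\A^*)$.

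The calculations here are routine; the point I would be most careful about — and the only genuine obstacle — is that the two requirements on $\vartheta_u$ in the statement are independent and must both be invoked. Continuity alone does not force single-valuedness: if $u\not\perp\A(0)$ then $\vartheta_u(0)$ is a nonzero subspace of $\C$, hence $\vartheta_u(f)=\C$ for every $f\in\text{dom}(\A)$, and such a constant set-valued map is continuous in the sense of the definition (the only neighbourhood of $\C$ being $\C$ itself). Thus keeping the bookkeeping straight — single-valuedness $\Leftrightarrow u\perp\A(0)$, continuity $\Leftrightarrow$ boundedness of the induced linear functional — is what the proof really turns on.
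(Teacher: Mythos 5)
Your proof is correct. The paper does not prove this proposition at all — it simply cites \cite[Proposition 2.6.3]{wilcox2002multivalued} — so there is no internal argument to compare against; your argument is the standard one, and the reduction of continuity to the bound $\abs{\inner{g}{u}}\leq M_u\norm{f}$ via the Riesz representation theorem is exactly what the paper records separately as Remark \ref{rem-dom-S*}. Two small points worth keeping straight: under the paper's convention the map $(u,v)\mapsto\inner{g}{u}-\inner{f}{v}$ is conjugate-linear rather than linear, but its kernel is still a closed subspace, so the intersection argument for closedness of $G(\A^*)$ is unaffected; and in the converse direction the Riesz theorem should be applied on the Hilbert space $\text{\bf Clo}(\text{dom}(\A))$, as you do, which produces some representative $v$ (unique only up to $\text{dom}(\A)^\perp$) with $(u,v)\in G(\A^*)$ — that ambiguity is harmless since only membership of $u$ in $\text{dom}(\A^*)$ is claimed. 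Your closing observation that continuity alone does not imply single-valuedness (the constant set-valued map $f\mapsto\C$ is continuous) correctly identifies why both hypotheses on $\vartheta_u$ are needed.
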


According to Riesz Lemma, we outline the following remark which will be later on referred to.
\begin{rem}\label{rem-dom-S*}
$u\in\text{dom}(\A^*)$ if and only if there is a constant $M_u>0$ such that $\abs{\inner{g}{u}}\leq M_u\norm{f}$ for all $(f,g)\in G(\A)$.
\end{rem}

\begin{lem}\label{extend->multivalued}
Let $\A:\text{dom}(\A)\subset\calH\to 2^\calH$ and $\B:\text{dom}(\B)\subset\calH\to 2^\calH$ be multi-valued linear operators. Suppose that $\B$ is injective and $\A$ is surjective. If $G(\A)\subseteq G(\B)$, then $\A=\B$.
\end{lem}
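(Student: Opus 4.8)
The plan is to exploit the fact that the hypothesis $G(\A)\subseteq G(\B)$ already gives half of what we want; it remains only to prove the reverse inclusion $G(\B)\subseteq G(\A)$. So I would fix an arbitrary pair $(x,y)\in G(\B)$ and try to produce the \emph{same} pair inside $G(\A)$. First I would use surjectivity of $\A$: since $\A$ is surjective, there exists some $x'\in\text{dom}(\A)$ with $y\in\A(x')$, i.e. $(x',y)\in G(\A)$. Now I would push this back up into $\B$ using the inclusion $G(\A)\subseteq G(\B)$, obtaining $(x',y)\in G(\B)$. At this point I have two pairs $(x,y)$ and $(x',y)\in G(\B)$ sharing the second coordinate $y$.

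The next step is to bring in injectivity of $\B$. By definition, $\B$ injective means $x\ne x'$ would force $\B(x)\cap\B(x')=\emptyset$; but $y$ lies in both $\B(x)$ and $\B(x')$, so that intersection is nonempty, whence $x=x'$. Therefore $(x,y)=(x',y)\in G(\A)$, which establishes $G(\B)\subseteq G(\A)$. Combined with the hypothesis, $G(\A)=G(\B)$, i.e. $\A=\B$ (two multi-valued operators with the same graph are equal by the Remark identifying a multi-valued operator with its graph).

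I do not expect any serious obstacle here; the argument is a short diagram chase through the definitions of surjective and injective for multi-valued operators. The one point requiring a moment's care is the formulation of injectivity: the paper defines it via the contrapositive ($x\ne y \implies \A(x)\cap\A(y)=\emptyset$), so one must phrase the conclusion correctly, namely that a common element in $\B(x)$ and $\B(x')$ forces $x=x'$. A secondary sanity check worth noting is that $x'$ produced by surjectivity indeed lies in $\text{dom}(\A)\subseteq\text{dom}(\B)$ (which follows from $G(\A)\subseteq G(\B)$), so that the injectivity hypothesis on $\B$ applies to both $x$ and $x'$.
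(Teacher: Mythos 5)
Your argument is correct and coincides with the paper's own proof: both fix $(x,y)\in G(\B)$, use surjectivity of $\A$ to find $x'$ with $y\in\A(x')$, push $(x',y)$ into $G(\B)$ via the inclusion, and invoke injectivity of $\B$ to conclude $x=x'$, hence $(x,y)\in G(\A)$. Nothing is missing.
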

\begin{proof}
It is sufficient to show that $G(\B)\subseteq G(\A)$. Indeed, let $(x,y)\in G(\B)$ and then $y\in\B(x)$. Since $\A$ is surjective, there exists $z\in\text{dom}(\A)$ such that $y\in\A(z)$, which implies, as $G(\A)\subseteq G(\B)$, that $(z,y)\in G(\B)$ and furthermore $y\in\B(z)$. Hence, we get $y\in\B(z)\cap\B(x)$, which implies, as $\B$ is injective, that $z=x$. Thus, we conclude $(x,y)=(z,y)\in G(\A)$.
\end{proof}

\subsection{Fock space}

By definition, Fock space, also called Segal-Bargmann space, $\calF^2$ is a class of entire functions with a specific growth at infinity.
More precisely, $\calF^2$ is a reproducing kernel Hilbert space endowed with inner product
\begin{equation}\label{inner-pro-Fock}
\langle f,g\rangle=\dfrac{1}{\pi}\int_\C f(z)\overline{g(z)}e^{-|z|^2}\;dV(z),
\end{equation}
and associated kernel functions
$$
K_{z,a,b}^{[k]}(x)=(ax+b)^k e^{x\overline{z}},\quad k\in\Z,\ z,x,a,b\in\C.
$$
Henceforth $\langle f,g\rangle$ denotes the scalar product of functions $f,g\in\calF^2$, whereas $(f,g)$ stands for an ordered pair. To simplify notation we write $K_z$, $K_z^{[k]}$, $b_k$ instead of $K_{z,1,0}^{[0]}$, $K_{z,1,0}^{[k]}$, $K_{0,1,0}^{[k]}$, respectively. Note that
$$
f^{(k)}(z)=\langle f,K_z^{[k]}\rangle,\quad\forall f\in\calF^2,z\in\C.
$$
For $y\in\C$ and $m\in\Z_{\geq 1}$, 
$$
\calF^2(m,y)=\{f\in\calF^2:f(y)=f'(y)=\cdots=f^{(m-1)}(y)=0\}.
$$
More information about Fock space can be found in the monograph \cite{KZ}. We collect below some relevant inequalities for our work.

\begin{lem}[{\cite{hu2013equivalent}}]
The norm $\|f\|$ induced by inner product \eqref{inner-pro-Fock} is comparable to
$$
\sum\limits_{j=0}^{n-1}|f^{(j)}(0)|+\left(\int\limits_{\C}|f^{(n)}(z)|^2(1+|z|)^{-2n}e^{-|z|^2}\,dV(z)\right)^{1/2};
$$
namely, for every $n\in\Z_{\geq 1}$ there are constants $\Delta_1=\Delta_1(n)>0$ and $\Delta_2=\Delta_2(n)>0$ satisfying
\begin{eqnarray}\label{equi-norm-Fock}
\nonumber&&\Delta_1\|f\|\\
\nonumber&&\leq\sum\limits_{j=0}^{n-1}|f^{(j)}(0)|+\left(\int\limits_{\C}|f^{(n)}(z)|^2(1+|z|)^{-2n}e^{-|z|^2}\,dV(z)\right)^{1/2}\\
&&\leq\Delta_2\|f\|,\quad\quad\forall f\in\calF^2.
\end{eqnarray}
\end{lem}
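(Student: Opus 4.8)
The plan is to translate the statement into an estimate on Taylor coefficients. Write $f(z)=\sum_{k\ge 0}a_kz^k$; since $\{z^k/\sqrt{k!}:k\ge 0\}$ is an orthonormal basis of $\calF^2$ (see \cite{KZ}), we have $\norm f^2=\sum_{k\ge 0}\abs{a_k}^2 k!$, while $f^{(j)}(0)=j!\,a_j$ for $j<n$ and $f^{(n)}(z)=\sum_{l\ge 0}a_{l+n}\tfrac{(l+n)!}{l!}z^l$. The weight $(1+\abs z)^{-2n}e^{-\abs z^2}$ is radial, so distinct monomials stay pairwise orthogonal in $L^2\!\big(\C,(1+\abs z)^{-2n}e^{-\abs z^2}\,dV\big)$; applying Parseval on each circle $\abs z=r$ and then Tonelli in $r$ (the integrand is nonnegative), followed by the substitution $t=r^2$, gives
$$
\int_\C\abs{f^{(n)}(z)}^2(1+\abs z)^{-2n}e^{-\abs z^2}\,dV(z)=\pi\sum_{l\ge 0}\abs{a_{l+n}}^2\Big(\tfrac{(l+n)!}{l!}\Big)^2 c_l,
\qquad
c_l:=\int_0^\infty t^l(1+\sqrt t)^{-2n}e^{-t}\,dt .
$$
Since $\frac{(l+n)!}{l!}=(l+1)\cdots(l+n)$ satisfies $(l+1)^n\le\frac{(l+n)!}{l!}\le n^n(l+1)^n$, everything reduces to the single weight comparison $c_l\asymp l!\,(l+1)^{-n}$, with constants depending only on $n$; granting it, $\big(\tfrac{(l+n)!}{l!}\big)^2 c_l\asymp(l+1)^{2n}\cdot l!\,(l+1)^{-n}=(l+1)^nl!\asymp(l+n)!$, so the left-hand integral is comparable to $\sum_{k\ge n}\abs{a_k}^2 k!$.

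For the weight comparison I would split the $t$-integral against the density $t^le^{-t}\,dt$ (whose mass concentrates near $t=l$) at $(l+1)/2$ and $2(l+1)$. Lower bound: on $0\le t\le 2(l+1)$ one has $(1+\sqrt t)^{-2n}\ge\big((1+\sqrt2)\sqrt{l+1}\,\big)^{-2n}=(1+\sqrt2)^{-2n}(l+1)^{-n}$, and $\int_0^{2(l+1)}t^le^{-t}\,dt\ge\tfrac12 l!$ by the Markov-type bound $\int_{2(l+1)}^\infty t^le^{-t}\,dt\le\frac1{2(l+1)}\int_0^\infty t^{l+1}e^{-t}\,dt=\tfrac12 l!$; multiplying gives $c_l\ge c_n\,l!\,(l+1)^{-n}$. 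Upper bound: for $t\ge(l+1)/2$ one has $(1+\sqrt t)^{-2n}\le((l+1)/2)^{-n}=2^n(l+1)^{-n}$, contributing at most $2^n(l+1)^{-n}l!$; for $0\le t<(l+1)/2$ one bounds $(1+\sqrt t)^{-2n}\le1$ and uses the Chernoff/Laplace estimate $\int_0^{(l+1)/2}t^le^{-t}\,dt\le l!\,(\sqrt e/2)^{\,l+1}$ (minimize $e^{s(l+1)/2}(1+s)^{-(l+1)}$ over $s>0$ at $s=1$), and since $\sup_{l\ge0}(\sqrt e/2)^{l+1}(l+1)^n<\infty$ this part is $\le C_n(l+1)^{-n}l!$. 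I expect this Laplace-type estimate to be the only genuinely analytic step; the point needing care is that the tail bounds be uniform in $l$ down to $l=0$, which the exponential decay makes routine.

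Finally I would assemble the pieces. The displayed expression in the statement separates into a "low" part $\sum_{j=0}^{n-1}\abs{f^{(j)}(0)}=\sum_{j=0}^{n-1}j!\abs{a_j}$, which is a norm of the vector $(a_0,\dots,a_{n-1})\in\C^{n}$ and hence comparable (constants depending on $n$) to $\big(\sum_{j=0}^{n-1}\abs{a_j}^2 j!\big)^{1/2}$, together with a "high" part just shown comparable to $\big(\sum_{k\ge n}\abs{a_k}^2 k!\big)^{1/2}$. Adding and invoking $\sqrt A+\sqrt B\le\sqrt{2(A+B)}\le\sqrt2\,(\sqrt A+\sqrt B)$ yields a quantity comparable to $\big(\sum_{k\ge 0}\abs{a_k}^2 k!\big)^{1/2}=\norm f$, which is precisely \eqref{equi-norm-Fock}; in particular finiteness of $\norm f$ forces the integral there to converge, so the estimate is meaningful for every $f\in\calF^2$.
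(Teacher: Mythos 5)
Your argument is correct, but note that the paper itself offers no proof of this lemma: it is quoted directly from \cite{hu2013equivalent}, where the equivalence is established in greater generality by real-variable methods of the same flavour as the local estimates the paper separately imports (Lemma \ref{Guo-ineq}, Lemma \ref{lem-Delta7}). You instead give a self-contained Hilbert-space proof tailored to $p=2$: radiality of the weight $(1+|z|)^{-2n}e^{-|z|^2}$ makes distinct monomials orthogonal, so Parseval plus Tonelli reduces \eqref{equi-norm-Fock} to the moment comparison $c_l=\int_0^\infty t^l(1+\sqrt t\,)^{-2n}e^{-t}\,dt\asymp l!\,(l+1)^{-n}$, and your two-sided estimate of $c_l$ checks out: the Markov-type bound gives $\int_0^{2(l+1)}t^le^{-t}\,dt\ge l!/2$ for the lower bound, while for the upper bound the region $t\ge (l+1)/2$ contributes at most $2^n(l+1)^{-n}l!$ and the Laplace/Chernoff estimate $\int_0^{(l+1)/2}t^le^{-t}\,dt\le l!\,(\sqrt e/2)^{l+1}$ (optimum at $s=1$) handles the rest uniformly in $l$ because $\sqrt e/2<1$. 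The bookkeeping $(l+1)^n\le (l+n)!/l!\le n^n(l+1)^n$, the finite-dimensional norm equivalence for the terms $|f^{(j)}(0)|$ with $j<n$, and the $\sqrt A+\sqrt B\asymp\sqrt{A+B}$ assembly are all fine, and finiteness of the integral for every $f\in\calF^2$ indeed falls out of the same identity. What your route buys is a short, elementary argument with explicit constants covering exactly the case the paper uses (the Hilbert space $\calF^2$ in one variable); what it gives up is the scope of the cited result, since orthogonality of monomials has no analogue for $p\ne 2$ or for the more general weighted settings treated in \cite{hu2013equivalent}.
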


\begin{lem}[{\cite[Claim, page 814]{guo2008composition}}]\label{Guo-ineq}
Let $\alpha\geq 0$ and $h$ be an entire function satisfying
$$
\int\limits_\C|h(z)|^2(1+|z|)^{-\alpha}e^{-|z|^2}\,dV(z)<\infty.
$$
Then for every $R>0$, there is $\Delta_3=\Delta_3(R,\alpha)>0$ with the property 
$$
\int\limits_\C|h(z)|^2(1+|z|)^{-\alpha}e^{-|z|^2}\,dV(z)\leq\Delta_3\int\limits_{|z|\geq R}|h(z)|^2(1+|z|)^{-\alpha}e^{-|z|^2}\,dV(z).
$$
\end{lem}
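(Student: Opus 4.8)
The plan is to reduce the claimed inequality to a pointwise estimate for $\abs{h}^2$ on the closed disk $\set{\abs{z}\leq R}$ in terms of an integral over its complement. Abbreviate $d\mu(z)=(1+\abs{z})^{-\alpha}e^{-\abs{z}^2}\,dV(z)$ and set $I_1=\int_{\abs{z}<R}\abs{h}^2\,d\mu$ and $I_2=\int_{\abs{z}\geq R}\abs{h}^2\,d\mu$, so that the assertion reads $I_1+I_2\leq\Delta_3 I_2$; hence it is enough to produce a constant $\Delta_3=\Delta_3(R,\alpha)$ with $I_1\leq(\Delta_3-1)I_2$. Since $\int_\C(1+\abs{z})^{-\alpha}e^{-\abs{z}^2}\,dV(z)<\infty$ for every $\alpha\geq 0$, we have $\mu(\set{\abs{z}<R})<\infty$, and therefore it suffices to bound $\max_{\abs{z}\leq R}\abs{h(z)}^2$ by a constant multiple of $I_2$. (If $I_2=\infty$ the inequality is trivial; if $I_2=0$ then $h$ vanishes on a set with an accumulation point, hence $h\equiv 0$ and both sides vanish; so we may assume $0<I_2<\infty$, although this is not really needed.)

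First I would invoke the maximum modulus principle together with the monotonicity of $r\mapsto\max_{\abs{z}=r}\abs{h(z)}$: fixing any point $z_*$ with $\abs{z_*}=R+1$ at which the maximum of $\abs{h}$ over the circle $\abs{z}=R+1$ is attained, one gets $\max_{\abs{z}\leq R}\abs{h(z)}=\max_{\abs{z}=R}\abs{h(z)}\leq\max_{\abs{z}=R+1}\abs{h(z)}=\abs{h(z_*)}$. Next, since $\abs{h}^2$ is subharmonic, the sub-mean-value (area) inequality over the disk $D(z_*,1/2)$ yields $\abs{h(z_*)}^2\leq\frac{4}{\pi}\int_{D(z_*,1/2)}\abs{h(w)}^2\,dV(w)$. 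The key geometric point is that $D(z_*,1/2)\subseteq\set{R<\abs{w}<R+3/2}$, so this disk lies entirely in the exterior region $\set{\abs{w}\geq R}$; moreover on it the weight is bounded below, $(1+\abs{w})^{-\alpha}e^{-\abs{w}^2}\geq(R+5/2)^{-\alpha}e^{-(R+3/2)^2}=:c(R,\alpha)>0$ (here $\alpha\geq 0$ is used precisely so that $s\mapsto s^{-\alpha}$ is non-increasing). Consequently $\int_{D(z_*,1/2)}\abs{h}^2\,dV\leq c(R,\alpha)^{-1}\int_{D(z_*,1/2)}\abs{h}^2\,d\mu\leq c(R,\alpha)^{-1}\int_{\abs{w}\geq R}\abs{h}^2\,d\mu=c(R,\alpha)^{-1}I_2$.

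Combining the two estimates gives $\max_{\abs{z}\leq R}\abs{h(z)}^2\leq\frac{4}{\pi c(R,\alpha)}I_2$, whence $I_1\leq\mu(\set{\abs{z}<R})\cdot\frac{4}{\pi c(R,\alpha)}I_2$, and one may take $\Delta_3=1+\frac{4\,\mu(\set{\abs{z}<R})}{\pi c(R,\alpha)}$, which depends only on $R$ and $\alpha$ as required. I do not expect a genuine obstacle: the argument is a routine combination of the maximum principle and the subharmonic mean-value inequality, and the only thing demanding a little care is choosing the averaging radius (here $1/2$) small enough that $D(z_*,1/2)$ stays inside $\set{\abs{w}\geq R}$ while still enclosing a fixed amount of area, together with checking that the lower bound $c(R,\alpha)$ on the weight is uniform over $\alpha\geq 0$ on the relevant compact annulus.
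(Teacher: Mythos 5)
Your argument is correct. Note, however, that the paper does not prove this lemma at all: it is imported verbatim from the cited source ({\cite[Claim, page 814]{guo2008composition}}), so there is no in-paper proof to compare against; what you have produced is a self-contained elementary substitute for that citation. Your route is the natural one: splitting the integral as $I_1+I_2$ and reducing the matter to bounding $\sup_{|z|\le R}|h|^2$ by a constant multiple of $I_2$, then combining the maximum modulus principle (to move the sup from $\{|z|\le R\}$ to a point $z_*$ on the circle $|z|=R+1$) with the area sub-mean-value inequality for the subharmonic function $|h|^2$ over $D(z_*,1/2)$, which sits inside the annulus $\{R+1/2<|w|<R+3/2\}\subset\{|w|\ge R\}$ where the weight $(1+|w|)^{-\alpha}e^{-|w|^2}$ is bounded below by a constant $c(R,\alpha)$ independent of the location of $z_*$ on that circle. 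All the quantitative details check out: the factor $4/\pi$ from the averaging disk of radius $1/2$, the finiteness of $\mu(\{|z|<R\})$, the uniformity of $c(R,\alpha)$ in $\alpha\ge 0$ via monotonicity of $s\mapsto s^{-\alpha}$, and the resulting constant $\Delta_3=1+\tfrac{4\mu(\{|z|<R\})}{\pi c(R,\alpha)}$ depends only on $R$ and $\alpha$, as the statement requires. The only cosmetic remark is that the degenerate-case discussion ($I_2=0$ or $I_2=\infty$) is dispensable: finiteness of the full integral already gives $I_2<\infty$, and your main estimate $I_1\le\mathrm{const}\cdot I_2$ handles $I_2=0$ automatically.
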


\begin{lem}\label{lem-Delta7}
Let $0<p<\infty$, $0<b<\infty$, $0<t<\infty$, and $0<q<\infty$. Then there is a constant $\Delta_7=\Delta_7(b,t)>0$ such that
$$
|f(z)|^p(1+|z|)^{-2q}e^{-b|z|^2}\leq\Delta_7\int\limits_{|z-x|<t}|f(x)|^p(1+|x|)^{-2q}e^{-b|x|^2}\,dV(x)
$$
for all entire functions $f$ and all $z\in\C$.
\end{lem}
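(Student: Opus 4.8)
The plan is to reduce the inequality to the classical sub-mean value property of subharmonic functions, after correcting the Gaussian weight by a harmonic factor so that the integrand becomes subharmonic.

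First I would dispose of the trivial case $f\equiv 0$ and fix $z\in\C$. Since $x\mapsto|f(x)|^p e^{-b|x|^2}$ is \emph{not} subharmonic, I would instead work with
\[
w(x)=p\log|f(x)|+b|z|^2-2b\,\re(\bar z x),
\]
observing that $p\log|f|$ is subharmonic on $\C$ while $b|z|^2-2b\,\re(\bar z x)=\re\big(b|z|^2-2b\bar z x\big)$ is harmonic in $x$; hence $w$, and therefore $e^{w}$, is subharmonic. The point of this choice is the identity $b|z|^2-2b\,\re(\bar z x)=-b|x|^2+b|x-z|^2$, which yields $e^{w(x)}=|f(x)|^p e^{-b|x|^2}e^{b|x-z|^2}$ and $e^{w(z)}=|f(z)|^p e^{-b|z|^2}$. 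Equivalently, one replaces $f$ by the entire function $x\mapsto f(x)e^{-(2b/p)\bar z x}$ and invokes subharmonicity of $|\cdot|^p$ of an entire function.

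Next I would apply the area version of the sub-mean value inequality to $e^{w}$ on the disc $\{x:|x-z|<t\}$, which gives
\[
|f(z)|^p e^{-b|z|^2}=e^{w(z)}\leq\frac{1}{\pi t^2}\int\limits_{|x-z|<t}|f(x)|^p e^{-b|x|^2}e^{b|x-z|^2}\,dV(x).
\]
On the disc $e^{b|x-z|^2}\leq e^{bt^2}$, so the twist contributes only a constant depending on $b$ and $t$. Finally I would restore the polynomial weight: for $|x-z|<t$ one has $1+|z|\leq 1+|x|+t\leq(1+t)(1+|x|)$, hence $(1+|z|)^{-2q}\leq(1+t)^{2q}(1+|x|)^{-2q}$; multiplying the previous display by $(1+|z|)^{-2q}$ and estimating under the integral sign yields the claim with $\Delta_7=e^{bt^2}(1+t)^{2q}/(\pi t^2)$.

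I do not expect a genuine obstacle. The one point that needs care is recognising at the outset that \emph{neither} the Gaussian factor \emph{nor} $(1+|x|)^{-2q}$ preserves subharmonicity, so the Gaussian weight must first be turned into a harmonic perturbation before any mean-value argument is invoked, while the polynomial weight is absorbed by the crude two-sided comparison of $1+|z|$ with $1+|x|$ on the disc. One should also note that $e^{w}$ is continuous, hence locally integrable on $\bar D(z,t)$, so the sub-mean value inequality applies verbatim, and that every constant above depends only on $b$, $t$ and the (fixed) exponent $q$, not on $f$.
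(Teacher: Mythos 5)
Your argument is correct, and it diverges from the paper only in how the Gaussian pointwise estimate is obtained: the paper simply cites Lemma~3 of the Cho--Zhu Fock-space paper (\cite{cho2012fock}), which asserts exactly the bound $|f(z)|^p e^{-b|z|^2}\leq D(b,t)\int_{|z-x|<t}|f(x)|^p e^{-b|x|^2}\,dV(x)$, and then absorbs the polynomial weight by the same comparison of $1+|z|$ with $1+|x|$ on the disc that you use. You instead prove that cited estimate from scratch, by twisting $f$ with the harmonic exponent $-2b\,\re(\bar z x)+b|z|^2$ so that the integrand becomes $|g|^p$ for an entire $g$ (hence subharmonic, with no issue at the zeros of $f$), applying the area sub-mean value inequality on $D(z,t)$, and paying a factor $e^{bt^2}$ for the twist; this buys a self-contained proof with an explicit constant $e^{bt^2}(1+t)^{2q}/(\pi t^2)$, at the cost of a few extra lines, and it is essentially the standard proof of the quoted lemma. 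One cosmetic remark: to pass from $(1+|z|)^{-2q}$ inside the integral to $(1+|x|)^{-2q}$ you need $1+|x|\leq(1+t)(1+|z|)$ (which follows from $|x|\leq|z|+|x-z|<|z|+t$), whereas you quote the reverse comparison $1+|z|\leq(1+t)(1+|x|)$; both are true on the disc, so the conclusion you state is correct, but the justification should cite the former. Note also that your constant (like the paper's) depends on $q$ through $(1+t)^{2q}$, so the statement's claim that $\Delta_7=\Delta_7(b,t)$ is slightly loose in both treatments.
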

\begin{proof}
By \cite[Lemma 3]{cho2012fock}, there exists a constant $D=D(b,t)>0$ such that
\begin{eqnarray*}
|f(z)|^pe^{-b|z|^2}
&\leq& D\int\limits_{|z-x|<t}|f(x)|^pe^{-b|x|^2}\,dV(x)\\
&=& D\int\limits_{|z-x|<t}|f(x)|^p(1+|x|)^{-2q}(1+|x|)^{2q}e^{-b|x|^2}\,dV(x).
\end{eqnarray*}
By taking into account the bounds
$$
(1+|x|)^{2q}\leq (1+|x-z|+|z|)^{2q}\leq (1+t+|z|)^{2q}\leq (1+t)^{2q}(1+|z|)^{2q},
$$
we infer
$$
|f(z)|^pe^{-b|z|^2} \leq D\int\limits_{|z-x|<t}|f(x)|^p(1+|x|)^{-2q}(1+t)^{2q}(1+|z|)^{2q}e^{-b|x|^2}\,dV(x),
$$
as desired.
\end{proof}

Given $\kappa\in\Z_{\geq 1}$, the Sobolev type Fock space $\calF\calB^2_\kappa$ of order $\kappa$ consists of all functions $f\in\calF^2$ satisfying
$$
\sum_{j=0}^\kappa\norm{f^{(j)}}^2<\infty.
$$

\begin{prop}[{\cite[Proposition 2.4]{cho2014linear}}]\label{Fock-sobolev-sp}
Given $\kappa\in\Z_{\geq 1}$, $f\in\calF\calB^2_\kappa$ if and only if $b_\kappa\cdot f\in\calF^2$, where $b_\kappa(z)=z^\kappa$. Moreover, there are constants $\Delta_5=\Delta_5(\kappa)>0$ and $\Delta_6=\Delta_6(\kappa)>0$ such that
$$
\Delta_5\norm{b_\kappa\cdot f}^2\leq\sum_{j=0}^\kappa\norm{f^{(j)}}^2\leq\Delta_6\norm{b_\kappa\cdot f}^2.
$$
\end{prop}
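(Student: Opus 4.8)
The plan is to diagonalize both quantities appearing in the statement over the orthonormal monomial basis $e_n(z)=z^n/\sqrt{n!}$ of $\calF^2$, turning the asserted two-sided estimate into a comparison of two explicit sequences of weights. Writing an entire function as $f(z)=\sum_{n\geq 0}c_nz^n$, Parseval gives $\norm{f}^2=\sum_{n\geq 0}|c_n|^2\,n!$, with the convention that this equals $+\infty$ when $f\notin\calF^2$. Differentiating term by term, $f^{(j)}(z)=\sum_{n\geq j}c_n\frac{n!}{(n-j)!}z^{n-j}$, hence $\norm{f^{(j)}}^2=\sum_{n\geq j}|c_n|^2\frac{(n!)^2}{(n-j)!}$, while $b_\kappa\cdot f=\sum_{n\geq 0}c_nz^{n+\kappa}$ yields $\norm{b_\kappa\cdot f}^2=\sum_{n\geq 0}|c_n|^2\,(n+\kappa)!$. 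Since every term is non-negative, I may sum over $0\leq j\leq\kappa$ and interchange the order of summation freely by monotone convergence, obtaining $\sum_{j=0}^\kappa\norm{f^{(j)}}^2=\sum_{n\geq 0}|c_n|^2\,w_n$ with $w_n:=\sum_{j=0}^{\min(n,\kappa)}\frac{(n!)^2}{(n-j)!}$.

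The core of the argument is then to show that $w_n$ is comparable, uniformly in $n$, to $v_n:=(n+\kappa)!$. Factoring $n!$ out of each sequence, $v_n/n!=(n+1)(n+2)\cdots(n+\kappa)=:Q(n)$, and — because for $j>n$ the falling factorial $n(n-1)\cdots(n-j+1)$ already vanishes — one has $w_n/n!=\sum_{j=0}^\kappa n(n-1)\cdots(n-j+1)=:P(n)$ for every $n\geq 0$. Now $P$ and $Q$ are polynomials of degree $\kappa$ with the same leading coefficient $1$, and $P(n)>0$, $Q(n)>0$ for every integer $n\geq 0$ (the $j=0$ summand already contributes $1$ to $P$). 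Hence $P(n)/Q(n)$ is a sequence of positive numbers converging to $1$, so $\Delta_5:=\inf_{n\geq 0}P(n)/Q(n)>0$ and $\Delta_6:=\sup_{n\geq 0}P(n)/Q(n)<\infty$ are finite positive constants depending only on $\kappa$, and $\Delta_5v_n\leq w_n\leq\Delta_6v_n$. Multiplying by $|c_n|^2$ and summing over $n$ gives exactly $\Delta_5\norm{b_\kappa\cdot f}^2\leq\sum_{j=0}^\kappa\norm{f^{(j)}}^2\leq\Delta_6\norm{b_\kappa\cdot f}^2$.

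It remains to read off the equivalence. If $b_\kappa\cdot f\in\calF^2$, then — since $\kappa\geq 1$ forces $(n+\kappa)!\geq n!$ — one has $\norm{f}^2\leq\norm{b_\kappa\cdot f}^2<\infty$, so $f\in\calF^2$, and the upper estimate gives $\sum_{j=0}^\kappa\norm{f^{(j)}}^2<\infty$, i.e. $f\in\calF\calB^2_\kappa$. Conversely, if $f\in\calF\calB^2_\kappa$, the lower estimate gives $\norm{b_\kappa\cdot f}^2\leq\Delta_5^{-1}\sum_{j=0}^\kappa\norm{f^{(j)}}^2<\infty$, and as $b_\kappa\cdot f$ is entire it lies in $\calF^2$. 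I do not anticipate a real obstacle here: the only point requiring a moment's care is the justification of termwise differentiation and of the rearrangement of the double sum, which is routine once one works in $[0,+\infty]$ and appeals to monotone convergence, the low-order terms $n<\kappa$ being harmless precisely because the falling factorials $n(n-1)\cdots(n-j+1)$ vanish there as soon as $j>n$. (Alternatively one could give a coefficient-free analytic proof using the equivalent norm from \cite{hu2013equivalent} together with Lemma \ref{lem-Delta7}, but the power-series computation above is the most direct route.)
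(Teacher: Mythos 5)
Your proof is correct. Note that the paper itself offers no argument for this proposition: it is quoted verbatim from \cite[Proposition 2.4]{cho2014linear}, so there is no internal proof to compare against. Your route is a clean, self-contained alternative that exploits the Hilbert-space structure: expanding $f$ in the orthogonal monomials, the three quantities $\norm{f^{(j)}}^2$, $\norm{b_\kappa\cdot f}^2$ become diagonal with explicit weights, and the whole statement reduces to the comparability of the two monic degree-$\kappa$ polynomials $P(n)=\sum_{j=0}^{\kappa}n(n-1)\cdots(n-j+1)$ and $Q(n)=(n+1)\cdots(n+\kappa)$ on the nonnegative integers, which is immediate since $P/Q$ is positive and tends to $1$. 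The coefficient computations check out (including the vanishing of the falling factorials for $j>n$, which handles the low-order terms), the Tonelli interchange is justified by nonnegativity, and Parseval in the extended sense $[0,+\infty]$ legitimately covers entire $f$ not assumed a priori to lie in $\calF^2$, so both directions of the equivalence are genuinely established. What the citation buys in the source literature is generality -- the Fock--Sobolev embedding there is proved for all $p$ via pointwise and integral estimates rather than orthogonality -- whereas your argument is tied to $p=2$ but is more elementary and yields the two-sided norm estimate with explicit constants $\Delta_5=\inf_{n\geq 0}P(n)/Q(n)$ and $\Delta_6=\sup_{n\geq 0}P(n)/Q(n)$, which is all this paper uses.
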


\begin{cor}\label{f=z^mg}
Let $f\in\calF^2$ be of form $f(z)=(Ez+F)^m g(z)$, where $E,F$ are complex constants with $E\ne 0$, $m\in\Z_{\geq 1}$ and $g$ is an entire function. Then $g^{(j)}\in\calF^2$ for all $j\in\{0,\cdots,m\}$ and moreover, there exists a constant $\Delta_4=\Delta_4(m,E,F)>0$ such that
\begin{equation}
\norm{g^{(j)}}\leq\Delta_4\norm{f},\quad\forall j\in\{0,\cdots,m\}.
\end{equation}
\end{cor}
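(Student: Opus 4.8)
The plan is to derive everything from one application of the Fock--Sobolev comparison of Proposition~\ref{Fock-sobolev-sp}. Write $\alpha=-F/E$, so that $Ez+F=E(z-\alpha)$ and hence $f=E^{m}(z-\alpha)^{m}g$, and fix $R:=|\alpha|+1$. If I can show that $b_{m}\cdot g=z^{m}g$ lies in $\calF^{2}$ with $\norm{z^{m}g}\le C\norm f$ for some constant $C=C(m,E,F)$, then Proposition~\ref{Fock-sobolev-sp} (with $\kappa=m$, applied to $g$) yields at once $g\in\calF\calB^{2}_{m}$ together with $\sum_{j=0}^{m}\norm{g^{(j)}}^{2}\le\Delta_{6}(m)\norm{z^{m}g}^{2}\le\Delta_{6}(m)\,C^{2}\norm f^{2}$; taking square roots gives $\norm{g^{(j)}}\le\sqrt{\Delta_{6}(m)}\,C\norm f=:\Delta_{4}\norm f$ for every $j\in\{0,\dots,m\}$, which is the assertion (and in particular each $g^{(j)}$ is in $\calF^{2}$).

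The first step toward $\norm{z^{m}g}\le C\norm f$ is a pointwise estimate for $g$ on the closed disk $\overline{D(0,R)}$. Since $g$ is entire it is holomorphic there, so by the maximum modulus principle $\sup_{|z|\le R}|g(z)|=\max_{|z|=R}|g(z)|$. On the circle $|z|=R$ one has $|g(z)|=|E|^{-m}|z-\alpha|^{-m}|f(z)|\le|E|^{-m}(R-|\alpha|)^{-m}|f(z)|$, while the reproducing property of $\calF^{2}$, combined with $\norm{K_{z}}^{2}=K_{z}(z)=e^{|z|^{2}}$, gives $|f(z)|=|\inner{f}{K_{z}}|\le e^{|z|^{2}/2}\norm f=e^{R^{2}/2}\norm f$. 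Hence $\sup_{|z|\le R}|g(z)|\le M_{1}\norm f$ with $M_{1}=M_{1}(m,E,F)$.

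The second step is a split of $\int_{\C}|z^{m}g(z)|^{2}e^{-|z|^{2}}\,dV(z)$ into the parts over $\{|z|\le R\}$ and $\{|z|>R\}$. On $\{|z|\le R\}$ the integrand is at most $R^{2m}M_{1}^{2}\norm f^{2}$ by the first step, so that part is at most $\pi R^{2m+2}M_{1}^{2}\norm f^{2}$. On $\{|z|>R\}$ I write $z^{m}g=\bigl(z/(z-\alpha)\bigr)^{m}E^{-m}f$ and use that $t\mapsto t/(t-|\alpha|)$ is decreasing on $(|\alpha|,\infty)$, so $|z/(z-\alpha)|\le R/(R-|\alpha|)$ there; that part is therefore at most $\bigl(R/(R-|\alpha|)\bigr)^{2m}|E|^{-2m}\int_{\C}|f|^{2}e^{-|z|^{2}}\,dV=\pi\bigl(R/(R-|\alpha|)\bigr)^{2m}|E|^{-2m}\norm f^{2}$. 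Adding the two contributions gives $\norm{z^{m}g}^{2}\le C^{2}\norm f^{2}$ with $C=C(m,E,F)$. The same split, using only $|z-\alpha|\ge 1$ on $\{|z|>R\}$ so that $|g(z)|^{2}e^{-|z|^{2}}\le|E|^{-2m}|f(z)|^{2}e^{-|z|^{2}}$ there, likewise shows $g\in\calF^{2}$, which is what allows Proposition~\ref{Fock-sobolev-sp} to be invoked. This completes the argument.

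The only point requiring any care is that one cannot simply translate $z\mapsto z-\alpha$ to move the zero of $Ez+F$ to the origin and then quote Proposition~\ref{Fock-sobolev-sp} verbatim: the shift $h(z)\mapsto h(z+\alpha)$ is \emph{unbounded} on $\calF^{2}$ (as one sees already on $h(z)=e^{cz}$, letting $c$ vary). The device above circumvents this by exploiting that $z^{m}/(z-\alpha)^{m}$ is bounded near infinity---so it is harmless on the Gaussian-weighted bulk $\{|z|>R\}$---while the compact disk $\{|z|\le R\}$, the only region where this ratio is large, is absorbed by the reproducing kernel bound. Every remaining inequality is routine.
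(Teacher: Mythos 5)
Your argument is correct, and it follows exactly the route the paper intends: Corollary \ref{f=z^mg} is stated there without proof as a direct consequence of Proposition \ref{Fock-sobolev-sp}, and your proof supplies precisely the missing reduction, namely that $b_m\cdot g\in\calF^2$ with $\norm{b_m\cdot g}\lesssim\norm{f}$, handled by comparing $z^m$ with $(Ez+F)^m$ off a compact disk and controlling the disk via the maximum modulus principle and the reproducing-kernel bound. The closing remark about the unboundedness of plain translation on $\calF^2$ correctly identifies why one cannot shortcut the argument by moving the zero $-F/E$ to the origin.
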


\section{Auxiliary results}\label{rins}
\subsection{Multi-valued maximal operators}
The following observation asserts that a maximal weighted composition operator is always multi-valued provided its order satisfies $m\geq 1$.

\begin{prop}\label{prop-must-multi}
Let $\bS$ be a multi-valued weighted composition operator induced by equation \eqref{subspace-wco}. If $m\in\Z_{\geq 1}$, then
\begin{enumerate}
\item $\bS(0)\subset\C_{m-1}[z]$.
\item $\bS_{\max}(0)=\C_{m-1}[z]$.
\end{enumerate}
\end{prop}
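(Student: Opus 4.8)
The plan is to unravel what it means for a pair $(0,g)$ to lie in the graph of $\bS$. By the defining equation \eqref{subspace-wco} with $f=0$, we need $\phi\cdot g^{(m)}=\psi\cdot 0\circ\varphi=0$ identically. Since $\phi$ is a fixed non-trivial entire function, it is not identically zero, so $\phi\cdot g^{(m)}\equiv 0$ forces $g^{(m)}\equiv 0$ as an entire function. An entire function whose $m$-th derivative vanishes identically is exactly a polynomial of degree at most $m-1$; hence any such $g$ lies in $\C_{m-1}[z]$. This gives part (1), after the side remark that every polynomial in $\C_{m-1}[z]$ indeed belongs to $\calF^2$ (polynomials are in Fock space), so the condition $g\in\calF^2$ imposes nothing extra. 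The containment $\bS(0)\subseteq\C_{m-1}[z]$ then holds for \emph{any} $\bS$ generated by \eqref{subspace-wco}, maximal or not, because its graph sits inside $G(\bS_{\max})$.

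For part (2) I would show the reverse inclusion $\C_{m-1}[z]\subseteq\bS_{\max}(0)$. By definition of the maximal operator, $g\in\bS_{\max}(0)$ precisely when $g\in\calF^2$ and the pair $(0,g)$ satisfies \eqref{subspace-wco}, i.e. $\phi\cdot g^{(m)}\equiv 0$. Given any $g\in\C_{m-1}[z]$ we have $g\in\calF^2$ and $g^{(m)}\equiv 0$, so $\phi\cdot g^{(m)}\equiv 0$ trivially; thus $g\in\bS_{\max}(0)$. Combined with part (1) applied to $\bS_{\max}$ itself, this yields $\bS_{\max}(0)=\C_{m-1}[z]$.

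The argument is essentially a one-line observation at each step; the only point requiring a moment of care is the elementary fact that $g^{(m)}\equiv 0$ on $\C$ characterizes $\C_{m-1}[z]$, which follows by integrating $m$ times (or by looking at the Taylor expansion at $0$: all coefficients of index $\geq m$ vanish). There is no real obstacle here — the proposition is a structural warm-up clarifying that the multi-valued part $\bS(0)$ is finite-dimensional of dimension exactly $m$ in the maximal case, and I do not anticipate any delicate analytic estimate being needed.
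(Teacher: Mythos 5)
Your proposal is correct and follows essentially the same route as the paper: from $(0,g)\in G(\bS)$ one gets $\phi\cdot g^{(m)}\equiv 0$, hence $g^{(m)}\equiv 0$ and $g\in\C_{m-1}[z]$, with the reverse inclusion for $\bS_{\max}(0)$ being immediate since every element of $\C_{m-1}[z]$ lies in $\calF^2$ and pairs with $0$ to satisfy \eqref{subspace-wco}. The paper's proof is just a more compressed version of exactly this argument, so there is nothing to add.
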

\begin{proof}
Let $g\in\bS(0)$. Then the pair $(0,g)$ satisfies equation \eqref{subspace-wco} and so
$$\phi(z)g^{(m)}(z)=0$$
for every $z\in\C$. The conclusion follows.
\end{proof}

\subsection{Closed graph}
The following result might be known. We include its proof for completeness of exposition.
\begin{prop}\label{W-closed}
Let $m\in\Z_{\geq 0}$. The operator $\bS_{\max}$ is closed on Fock space $\calF^2$.
\end{prop}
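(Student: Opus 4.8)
The plan is to verify that $G(\bS_{\max})$ is closed by a direct sequential argument: take $(f_n,g_n)\in G(\bS_{\max})$ with $f_n\to f$ and $g_n\to g$ in $\calF^2$, and show $(f,g)\in G(\bS_{\max})$, i.e. that $\psi\cdot f\circ\varphi=\phi\cdot g^{(m)}$ as entire functions. The decisive observation is that norm convergence in $\calF^2$ forces \emph{locally uniform convergence} on $\C$, since point evaluations $f\mapsto f(z)$ (and, via the kernels $K_z^{[k]}$, also $f\mapsto f^{(k)}(z)$) are bounded functionals, with bound locally uniform in $z$. Concretely, $|f_n(z)-f(z)|=|\langle f_n-f,K_z\rangle|\le \|f_n-f\|\,e^{|z|^2/2}$, and similarly $|g_n^{(m)}(z)-g^{(m)}(z)|\le\|g_n-g\|\,\|K_z^{[m]}\|$, so $f_n\circ\varphi\to f\circ\varphi$ and $g_n^{(m)}\to g^{(m)}$ pointwise on $\C$.

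First I would fix an arbitrary $z\in\C$ and pass to the limit in the identity $\psi(z)f_n(\varphi(z))=\phi(z)g_n^{(m)}(z)$, which holds for every $n$ because $(f_n,g_n)\in G(\bS_{\max})$. Using the pointwise convergences just described, the left side tends to $\psi(z)f(\varphi(z))$ and the right side to $\phi(z)g^{(m)}(z)$; hence $\psi(z)f(\varphi(z))=\phi(z)g^{(m)}(z)$ for all $z\in\C$. Since $f\in\calF^2$ and $g\in\calF^2$ are entire, the functions $\psi\cdot f\circ\varphi$ and $\phi\cdot g^{(m)}$ are entire and agree everywhere, so $(f,g)$ satisfies equation \eqref{subspace-wco}. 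By definition of $\bS_{\max}$ this gives $f\in\text{dom}(\bS_{\max})$ and $g\in\bS_{\max}(f)$, i.e. $(f,g)\in G(\bS_{\max})$. Therefore $G(\bS_{\max})$ is closed and $\bS_{\max}$ is a closed multi-valued operator.

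The argument is essentially routine; the only point deserving care is the uniform control of the evaluation functionals, i.e. checking that $\|K_z^{[m]}\|$ is finite and depends only on $z$ (and $m$), so that convergence in $\calF^2$-norm really does yield $g_n^{(m)}(z)\to g^{(m)}(z)$ for each fixed $z$. This follows from $g^{(m)}(z)=\langle g,K_z^{[m]}\rangle$ together with $K_z^{[m]}=K_{z,1,0}^{[m]}\in\calF^2$, a standard fact about Fock space recalled in Section \ref{basic-sec}. I do not expect any genuine obstacle here; the case distinction on $m$ (whether $m=0$ or $m\ge 1$) plays no role in this proof, since the reproducing property $f^{(k)}(z)=\langle f,K_z^{[k]}\rangle$ covers $k=0$ as well.
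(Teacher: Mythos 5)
Your proof is correct and follows essentially the same route as the paper: a sequential argument showing the graph is closed by passing to the limit in the identity $\psi(z)f_n(\varphi(z))=\phi(z)g_n^{(m)}(z)$ for each fixed $z$. The only (harmless) difference is that you justify $g_n^{(m)}(z)\to g^{(m)}(z)$ directly from the reproducing property $g^{(m)}(z)=\langle g,K_z^{[m]}\rangle$ and the boundedness of the evaluation functionals, whereas the paper cites an external lemma for the convergence of derivatives; your version is self-contained and equally valid.
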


\begin{proof}
Let $\{(f_n,g_n)\}\subseteq G(\bS_{\max})$ be a convergent sequence. Suppose that
$$
f_n\to f \quad\text{and}\quad g_n\to g\quad\hbox{in $\calF^2$},
$$
and so
$$
f_n(z)\to f(z) \quad\text{and}\quad g_n(z)\to g(z),\quad\forall z\in\C.
$$
In view of \cite[Lemma 2.5]{hai2018complex}, 
$$
g_n^{(j)}(z)\to g^{(j)}(z),\quad\forall z\in\C, j\in\Z_{\geq 0}.
$$
On the other hand,
$$
\psi(z)f_n(\varphi(z))=\phi(z)g_n^{(m)}(z),\quad\forall z\in\C.
$$
Therefore,
$$
\psi(z)f(\varphi(z))=\phi(z)g^{(m)}(z),\quad\forall z\in \C,\ \text{which means}\ (f,g)\in G(\bS_{\max}).
$$
\end{proof}

\subsection{Some distinguished elements in the domains of our multi-valued operators}
In this subsection we investigate qualitative properties of some special elements belonging to the domains of $\bS$ or $\bS^*$. This is an initial and rather important step towards the study of deeper characteristics of multi-valued weighted composition operators.

\begin{prop}\label{basic-lemma}
Let $m\in\Z_{\geq 0}$ and $\bS$ be a multi-valued weighted composition operator induced by equation \eqref{subspace-wco}. For every $z\in\C$, 
$$
(\overline{\phi(z)}K_z^{[m]},\overline{\psi(z)}K_{\varphi(z)})\in G(\bS^*).
$$
\end{prop}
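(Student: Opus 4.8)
The plan is to verify the defining relation of $G(\bS^*)$ from Definition \ref{S-adjoint} directly: a pair $(u,v)$ lies in $G(\bS^*)$ precisely when $\inner{g}{u}=\inner{f}{v}$ for every $(f,g)\in G(\bS)$. So fix $z\in\C$ and take $u=\overline{\phi(z)}K_z^{[m]}$, $v=\overline{\psi(z)}K_{\varphi(z)}$. Let $(f,g)$ be an arbitrary pair in $G(\bS)$, so that $(f,g)$ satisfies \eqref{subspace-wco}, i.e. $\psi(w)f(\varphi(w))=\phi(w)g^{(m)}(w)$ for all $w\in\C$. Evaluating this identity at the single point $w=z$ gives $\psi(z)f(\varphi(z))=\phi(z)g^{(m)}(z)$, which is exactly the numerical identity we need to convert into an inner-product statement.

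The key computation is then to recognize both sides of $\psi(z)f(\varphi(z))=\phi(z)g^{(m)}(z)$ as inner products against the stated kernel functions. Using the reproducing property $f(\varphi(z))=\inner{f}{K_{\varphi(z)}}$ and $g^{(m)}(z)=\inner{g}{K_z^{[m]}}$ recalled in the Fock space subsection, one has
$$
\inner{g}{\overline{\phi(z)}K_z^{[m]}}=\phi(z)\inner{g}{K_z^{[m]}}=\phi(z)g^{(m)}(z)=\psi(z)f(\varphi(z))=\psi(z)\inner{f}{K_{\varphi(z)}}=\inner{f}{\overline{\psi(z)}K_{\varphi(z)}},
$$
where the scalars come out of the antilinear slot as complex conjugates, which is why the conjugates appear on $\phi(z)$ and $\psi(z)$ in the definitions of $u$ and $v$. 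Since $(f,g)\in G(\bS)$ was arbitrary, this shows $(u,v)\in G(\bS^*)$, which is the claim.

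There is essentially no obstacle here; the only point requiring a moment's care is bookkeeping the antilinearity of the second argument of $\inner{\cdot}{\cdot}$ so that the conjugate bars land correctly, together with noting that $K_z^{[m]}$ and $K_{\varphi(z)}$ genuinely belong to $\calF^2$ (they do, being of the form $K_{z,1,0}^{[m]}$ and $K_{\varphi(z),1,0}^{[0]}$, finite linear combinations of reproducing kernels and their derivatives). It is worth remarking that we never needed $(f,g)$ to satisfy the full functional identity on all of $\C$ — only its value at the fixed point $z$ — which is precisely what makes these particular kernel pairs land in the adjoint's graph for every choice of $z$.
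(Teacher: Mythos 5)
Your argument is correct and is essentially identical to the paper's proof: both evaluate the defining identity \eqref{subspace-wco} at the point $z$ and recognize $\phi(z)g^{(m)}(z)=\inner{g}{\overline{\phi(z)}K_z^{[m]}}$ and $\psi(z)f(\varphi(z))=\inner{f}{\overline{\psi(z)}K_{\varphi(z)}}$ via the reproducing property, so the pair lies in $G(\bS^*)$ by Definition \ref{S-adjoint}. Your extra remarks on the antilinear slot and on membership of the kernels in $\calF^2$ are fine but add nothing beyond the paper's one-line computation.
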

\begin{proof}
For any $(f,g)\in\bS$, 
\begin{eqnarray*}
\langle g,\overline{\phi(z)}K_z^{[m]}\rangle=\phi(z)g^{(m)}(z)=\psi(z)f(\varphi(z))=\langle f,\overline{\psi(z)}K_{\varphi(z)}\rangle,
\end{eqnarray*}
which gives the desired conclusion.
\end{proof}

Next we prove a quite surprising remark: the range $\ra(\bS^*)$ is dense.
\begin{prop}\label{prop-zero-phi-f}
Let $m\in\Z_{\geq 0}$ and $\bS$ be a multi-valued weighted composition operator defined by \eqref{subspace-wco}, where $\psi\not\equiv 0$, $\phi\not\equiv 0$, and $\varphi\not\equiv\text{const}$. Then
\begin{enumerate}
\item If $f\in\text{dom}(\bS)$, then $\varphi(\zer(\phi))\subset\zer(f)$.
\item The operator $\bS$ is injective.
\item $\text{\bf Clo}(\ra(\bS^*))=\calF^2$.
\end{enumerate}
\end{prop}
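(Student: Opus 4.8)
The plan is to handle part $(1)$ by evaluating the defining equation at the zeros of $\phi$, and to deduce parts $(2)$ and $(3)$ from a single elementary remark, used twice, together with Proposition \ref{basic-lemma} for $(3)$. The remark: if $h$ is an entire function with $\psi(z)h(\varphi(z))\equiv 0$, then $h\equiv 0$. Indeed, $\psi\not\equiv 0$ forces $h\circ\varphi\equiv 0$, and since $\varphi$ is nonconstant, $\varphi(\C)$ is a nonempty open subset of $\C$ on which $h$ vanishes, so $h\equiv 0$ by the identity theorem.

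For part $(1)$ I would fix $f\in\text{dom}(\bS)$, choose $g\in\calF^2$ with $\psi(z)f(\varphi(z))=\phi(z)g^{(m)}(z)$ for all $z$, and evaluate this identity at each $\alpha\in\zer(\phi)$ to get $\psi(\alpha)f(\varphi(\alpha))=0$; since $\psi(\alpha)\ne 0$ this gives $f(\varphi(\alpha))=0$, that is $\varphi(\zer(\phi))\subseteq\zer(f)$. (If one does not wish to assume $\psi$ nonvanishing at $\alpha$, the same conclusion follows from an order comparison: since $g^{(m)}$ is entire, $(z-\alpha)^{\text{ord}(\alpha,\phi)}$ must divide $\psi(z)f(\varphi(z))$, which is incompatible with $f(\varphi(\alpha))\ne 0$ once $\text{ord}(\alpha,\psi)<\text{ord}(\alpha,\phi)$.) For part $(2)$ I would first recall that a linear relation $\bS$ is injective exactly when $(f,0)\in G(\bS)$ forces $f=0$ (if $g\in\bS(f_1)\cap\bS(f_2)$ then $(f_1-f_2,0)\in G(\bS)$, and conversely any nonzero $w$ with $0\in\bS(w)$ defeats injectivity), and then note that $(f,0)\in G(\bS)$ means precisely $\psi(z)f(\varphi(z))=\phi(z)\cdot 0=0$ for all $z$, so $f\equiv 0$ by the remark.

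For part $(3)$ I would start from Proposition \ref{basic-lemma}, which supplies, for every $z\in\C$, the pair $(\overline{\phi(z)}K_z^{[m]},\overline{\psi(z)}K_{\varphi(z)})\in G(\bS^*)$; hence $\overline{\psi(z)}K_{\varphi(z)}\in\ra(\bS^*)$ for all $z$. Since $\ra(\bS^*)$ is a linear subspace, it contains $\Span\{\overline{\psi(z)}K_{\varphi(z)}:z\in\C\}$, and it suffices to prove this span is dense. If $h\in\calF^2$ is orthogonal to it, the reproducing property $\inner{h}{K_w}=h(w)$ yields $0=\inner{h}{\overline{\psi(z)}K_{\varphi(z)}}=\psi(z)h(\varphi(z))$ for every $z\in\C$, whence $h\equiv 0$ by the remark. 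Therefore $\ra(\bS^*)^{\perp}=\{0\}$ and $\text{\bf Clo}(\ra(\bS^*))=\calF^2$.

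The main --- and essentially only --- point that needs care is part $(1)$: the bare identity $\psi(\alpha)f(\varphi(\alpha))=0$ does not by itself give $f(\varphi(\alpha))=0$, so one must first bring in the behaviour of $\psi$ at the zeros of $\phi$ (its non-vanishing there, or the order inequality above) before cancelling $\psi(\alpha)$. Parts $(2)$ and $(3)$ are then two-line consequences of $\psi\not\equiv 0$, $\varphi$ nonconstant, Proposition \ref{basic-lemma}, and the identity theorem; in particular the ``surprising'' density in $(3)$ is accounted for entirely by the abundance of kernel vectors $\overline{\psi(z)}K_{\varphi(z)}$ produced by Proposition \ref{basic-lemma}.
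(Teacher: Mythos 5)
Your proposal is correct and follows essentially the same route as the paper: part (1) by evaluating the defining equation at the zeros of $\phi$, part (2) from $\psi\not\equiv 0$ together with the open image of the nonconstant $\varphi$ and the identity theorem (you merely route it through the trivial-kernel characterization of injectivity instead of the paper's direct two-function contradiction), and part (3) by combining Proposition \ref{basic-lemma} with the reproducing property of the kernels $K_{\varphi(z)}$. Your caveat on (1) is well taken: the paper's proof silently cancels $\psi(\alpha)$, which is only legitimate when $\psi$ does not vanish on $\zer(\phi)$ --- in effect the zero-free hypothesis imposed later as the standing assumption \eqref{rem-vary-impot} --- or under the order comparison you indicate.
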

\begin{proof}
(1) Let $f\in\text{dom}(\bS)$, so there exits $g\in\calF^2$ with $(f,g)$ satisfying  \eqref{subspace-wco}; that is
$$
\psi(z)f(\varphi(z))=\phi(z)g^{(m)}(z),\quad\forall z\in\C.
$$

(2) Assume by contradiction that there are $f,g\in\text{dom}(\bS)$ with $f\not\equiv g$ and $\bS(f)\cap\bS(g)\ne\emptyset$. Let $h\in\bS(f)\cap\bS(g)$ subject to
$$
\psi(z)f(\varphi(z))=\phi(z)h^{(m)}(z)=\psi(z)g(\varphi(z)),\quad\forall z\in\C.
$$
Since $\psi\not\equiv 0$, we get
$$
f(\varphi(z))=g(\varphi(z)),\quad\forall z\in\C,
$$
and so by the Identity Theorem, $f\equiv g$. Contradiction!

(3) Assume there exists $f\in\calF^2$ with $f\notin\text{\bf Clo}(\ra(\bS^*))$ and
$$
\inner{f}{g}=0,\quad\forall g\in\ra (\bS^*).
$$
By Proposition \ref{basic-lemma}, we can choose $g=\overline{\psi(z)}K_{\varphi(z)}$, whence
$$
0=\inner{f}{\overline{\psi(z)}K_{\varphi(z)}}=\psi(z)f(\varphi(z)),\quad\forall z\in\C.
$$
Since $\varphi\not\equiv\text{const}$ and $\psi\not\equiv 0$, we must have $f\equiv 0$.
\end{proof}

The following observation shows that the domain of a multi-valued weighted composition operator is not the full Fock space. This is in sharp contrast to the single-valued operator situation. In addition, the result below is an important step toward the computation of adjoints and symmetries (see Theorems \ref{formula-adjoint}, \ref{thm-Cabc-self-maximal} and \ref{thm-hermitian-maximal}).
\begin{prop}\label{dom-Smax}
Let $m\in\Z_{\geq 1}$ and $a,b,A,B,C,D$ be complex constants with $A,C\ne 0$. Furthermore, let $\bS_{\max}$ be a maximal multi-valued weighted composition operator induced by equation \eqref{subspace-wco}, where 
\begin{equation}\label{explicit-form}
\varphi(z)=Az+B,\quad\psi(z)=Ce^{Dz},\quad\phi(z)=(aAz+aB+b)^m,\quad z\in\C.
\end{equation}
For every $z\in\C$ the following assertions hold.
\begin{enumerate}
\item If $f\in\text{dom}(\bS_{\max})$, then it has a zero at $-b/a$ of order at least $m$.
\item $K_{z,a,b}^{[k]}\notin\text{dom}(\bS_{\max})$ if $k\in\Z$ with $0\leq k<m$.
\item $K_{z,a,b}^{[k]}\in\text{dom}(\bS_{\max})$ if $k\in\Z_{\geq m}$ and moreover
\begin{equation*}
\bS_{\max}(K_{z,a,b}^{[k]})=\{\vartheta_{z,a,b,k}(\cdot):\text{satisfying \eqref{vartheta}}\},
\end{equation*}
where
\begin{equation}\label{vartheta}
\vartheta_{z,a,b,k}^{(m)}(x)=Ce^{B\overline{z}}(aAx+aB+b)^{k-m} K_{\overline{A}z+\overline{D}}(x),\quad x\in\C.
\end{equation}
In other words, $\bS_{\max}(K_{z,a,b}^{[k]})$ is an $m$-dimensional affine set of entire functions with the same $m$-th derivative, given in \eqref{vartheta}. In particular,
\begin{equation}\label{vartheta-form}
\vartheta_{z,a,b,m}(x)=
\begin{cases}
\dfrac{1}{m!}Ce^{B\overline{z}}x^m+\lambda(x),\quad \overline{A}z+\overline{D}=0,\\
\\
Ce^{B\overline{z}}(A\overline{z}+D)^{-m}K_{\overline{A}z+\overline{D}}(x)+\lambda(x),\quad \overline{A}z+\overline{D}\ne 0,
\end{cases}
\end{equation}
where $\lambda\in\C_{m-1}[z]$.
\item $\text{\bf Clo}(\text{dom}(\bS_{\max}))=\calF^2(m,-b/a)$.
\end{enumerate}
\end{prop}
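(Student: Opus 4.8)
The plan is to prove the four assertions in order, each building on the previous.

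\medskip

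\textbf{Step 1 (the zero at $-b/a$).} Here $\zer(\phi) = \{-b/a\}$ with $\text{ord}(-b/a,\phi) = m$, and $\varphi(\zer(\phi)) = \{A(-b/a)+B\}$. Applying Proposition \ref{prop-zero-phi-f}(1) only gives that $f$ vanishes at $\varphi(-b/a)$, which is not what we want; instead I would argue directly from \eqref{subspace-wco}. Write $\psi(z)f(\varphi(z)) = \phi(z)g^{(m)}(z) = (aA)^m(z+b/a)^m g^{(m)}(z)$. Since $g\in\calF^2$ and hence $g^{(m)}$ is entire, the right side vanishes to order at least $m$ at $z = -b/a$. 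As $\psi(z) = Ce^{Dz}$ is nonvanishing, $f(\varphi(z))$ must vanish to order at least $m$ at $z = -b/a$; since $\varphi'= A\ne 0$, the change of variables $w=\varphi(z)$ is a local biholomorphism and therefore $f$ vanishes to order at least $m$ at $w = \varphi(-b/a) = B - Ab/a$. Wait — this gives the zero at $\varphi(-b/a)$, not at $-b/a$. The statement says the zero is at $-b/a$, so either $\varphi(-b/a) = -b/a$ is forced by the structure, or I have mismatched the roles; I would re-examine the substitution $f\mapsto f\circ\varphi$ carefully, noting that in \eqref{subspace-wco} it is $f\circ\varphi$ that appears, so the zeros of $\phi$ pull back through $\varphi$ to zeros of $f$ only after composing, i.e. $f$ has a zero at $\varphi(\alpha)$ for each $\alpha\in\zer(\phi)$. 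Thus the assertion as literally written should read that $f$ vanishes at $\varphi(-b/a)$; I would reconcile this with the normalization in \eqref{explicit-form} (note $aAz+aB+b = a(\varphi(z)) + b$, so $\phi(z) = (a\varphi(z)+b)^m$, meaning $\phi\circ\varphi^{-1}(w) = (aw+b)^m$ vanishes at $w=-b/a$) — this confirms $f$ vanishes to order $\geq m$ at $w = -b/a$ directly, because $\psi(z)f(\varphi(z)) = (a\varphi(z)+b)^m g^{(m)}(z)$ and substituting $w=\varphi(z)$ gives $\psi(\varphi^{-1}(w))f(w) = (aw+b)^m g^{(m)}(\varphi^{-1}(w))$.

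\medskip

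\textbf{Step 2 (kernels of low order are not in the domain).} For $0\le k<m$, the function $K_{z,a,b}^{[k]}(x) = (ax+b)^k e^{x\bar z}$ has a zero of order exactly $k < m$ at $x=-b/a$ (since $e^{x\bar z}$ is nonvanishing), so by assertion (1) it cannot lie in $\text{dom}(\bS_{\max})$.

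\medskip

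\textbf{Step 3 (kernels of high order are in the domain, with explicit image).} For $k\ge m$, I would solve \eqref{subspace-wco} explicitly for $g$ given $f = K_{z,a,b}^{[k]}$. Using $\psi(w')f(\varphi(w')) = \phi(w')g^{(m)}(w')$ with the substitutions from \eqref{explicit-form}: $\psi(w')K_{z,a,b}^{[k]}(\varphi(w')) = Ce^{Dw'}(a(Aw'+B)+b)^k e^{(Aw'+B)\bar z}$ and $\phi(w') = (aAw'+aB+b)^m$, so $g^{(m)}(w') = Ce^{Dw'}(aAw'+aB+b)^{k-m} e^{(Aw'+B)\bar z} = Ce^{B\bar z}(aAw'+aB+b)^{k-m} e^{(A\bar z + D)w'}$, which is exactly \eqref{vartheta} after renaming $w'\to x$; here I recognize $e^{(A\bar z+D)w'} = K_{\overline{A}z+\overline{D}}(w')$. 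I then need to check that any entire primitive of order $m$ of this right-hand side lies in $\calF^2$. The right side is a polynomial times an exponential $e^{(A\bar z+D)x}$, which is in $\calF^2$; to get $g$ I integrate $m$ times, which multiplies by at worst a polynomial and possibly divides by $(A\bar z+D)^m$ (when $A\bar z+D\ne 0$) — still a polynomial times exponential, hence in $\calF^2$. The $m$ constants of integration produce the $m$-dimensional affine family, and the ambiguity is exactly a polynomial $\lambda\in\C_{m-1}[z]$, matching $\bS_{\max}(0) = \C_{m-1}[z]$ from Proposition \ref{prop-must-multi}(2). The two cases in \eqref{vartheta-form} are $k=m$ specializations: when $A\bar z+D=0$ the right side of \eqref{vartheta} is the constant $Ce^{B\bar z}$, whose $m$-fold primitive is $\frac{1}{m!}Ce^{B\bar z}x^m$; when $A\bar z+D\ne 0$ one integrates the exponential $m$ times producing the factor $(A\bar z+D)^{-m}$.

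\medskip

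\textbf{Step 4 (density).} By assertion (1), $\text{dom}(\bS_{\max})\subseteq\calF^2(m,-b/a)$, hence $\text{\bf Clo}(\text{dom}(\bS_{\max}))\subseteq\calF^2(m,-b/a)$ since the latter is closed (it is the intersection of kernels of the bounded evaluation functionals $f\mapsto f^{(j)}(-b/a)$, $0\le j<m$). For the reverse inclusion, I would show the span of $\{K_{z,a,b}^{[k]}: k\ge m,\ z\in\C\}$ is dense in $\calF^2(m,-b/a)$; by assertion (3) this span is contained in $\text{dom}(\bS_{\max})$. If $f\in\calF^2(m,-b/a)$ is orthogonal to all $K_{z,a,b}^{[k]}$ with $k\ge m$, then in particular $\inner{f}{K_{z,a,a b}^{[m]}} = 0$ for all $z$; writing $f(x) = (ax+b)^m h(x)$ with $h$ entire (possible since $f$ vanishes to order $\ge m$ at $-b/a$, and $h\in\calF^2$ by Corollary \ref{f=z^mg} since $ax+b$ is linear with nonzero leading coefficient), a computation of $\inner{(ax+b)^m h}{(ax+b)^m e^{x\bar z}}$ should reduce, after expanding, to a statement forcing $h\equiv 0$: indeed $\inner{f}{K_{z,a,b}^{[k]}}$ is (up to nonvanishing factors) a derivative-type evaluation of a transform of $f$ at $z$, and vanishing for all $z\ge m$ and all $z\in\C$ forces $f\equiv 0$. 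More cleanly, I expect $\langle f, K^{[k]}_{z,a,b}\rangle$ to be expressible via the reproducing property as a linear combination of $f^{(j)}(\bar z)$-type quantities; requiring this to vanish for all $k\ge m$, $z$, together with $f\in\calF^2(m,-b/a)$, pins down $f=0$.

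\medskip

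\textbf{Main obstacle.} The delicate points are: (i) getting the substitution direction right in Step 1 so the zero genuinely lands at $-b/a$ (the normalization $\phi(z) = (a\varphi(z)+b)^m$ is what makes this work, and I would state it explicitly); and (ii) the density argument in Step 4 — verifying that orthogonality to all high-order kernels forces $f\equiv 0$ within the constrained space $\calF^2(m,-b/a)$, rather than merely $f\in$ some larger space. I expect (ii) to be the real work: one must compute $\langle (ax+b)^m h, (ax+b)^m e^{x\bar z}\rangle$ and identify it with $e^{\text{something}}$ times a polynomial in $z$ times values of $h$ and its derivatives, then invoke the Identity Theorem and an induction on the order of vanishing. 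Corollary \ref{f=z^mg} and Proposition \ref{Fock-sobolev-sp} are the tools that keep everything inside $\calF^2$ throughout.
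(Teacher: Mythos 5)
Your route is the same as the paper's, and items (1)--(3) are essentially fine. For (1), your first pass miscomputes the zero set of $\phi$ (writing $\phi(z)=(aA)^m(z+b/a)^m$, which would put $\zer(\phi)$ at $-b/a$; in fact $\zer(\phi)=\{\varphi^{-1}(-b/a)\}$), but your corrected version is exactly right: since $\phi=(a\varphi+b)^m$, substituting $w=\varphi(z)$ in \eqref{subspace-wco} and using that $\psi$ is zero free and $\varphi$ is an affine bijection shows $f$ vanishes to order at least $m$ at $-b/a$. Item (2) then follows as you say, and item (3) is the same explicit solution for $g^{(m)}$ that the paper performs, with the correct identification of the $m$-dimensional ambiguity $\C_{m-1}[z]$ (via Proposition \ref{prop-must-multi}) and the two cases of \eqref{vartheta-form}; your membership check (polynomial times exponential, and all its iterated primitives, lie in $\calF^2$) is an acceptable substitute for the paper's verification through \eqref{equi-norm-Fock}.

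The genuine gap is the end of item (4). You correctly reduce density to: if $f\in\calF^2(m,-b/a)$ is orthogonal to every $K^{[k]}_{z,a,b}$ with $k\ge m$ (these lie in $\text{dom}(\bS_{\max})$ by item (3)), then $f\equiv 0$ --- but at that point you only assert that the computation ``should'' force this, hedging between a factorization $f=(ax+b)^m h$ (an unnecessary detour) and a kernel expansion, and you carry out neither; you even flag it yourself as ``the real work.'' The missing step, which is precisely what the paper does, is short: expanding $(ax+b)^k$ and using $f^{(j)}(z)=\langle f,K^{[j]}_z\rangle$ gives $\langle f,K^{[k]}_{z,a,b}\rangle=\sum_{j=0}^{k}\binom{k}{j}\overline{a}^{\,j}\,\overline{b}^{\,k-j}f^{(j)}(z)$ for all $z\in\C$. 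Setting $z=-b/a$ and inducting on $k\ge m$: for $k=m$ the terms with $j<m$ vanish because $f\in\calF^2(m,-b/a)$, leaving $\overline{a}^{\,m}f^{(m)}(-b/a)=0$, hence $f^{(m)}(-b/a)=0$ (as $a\ne 0$); for $k=m+1$ the terms with $j\le m$ now vanish, giving $f^{(m+1)}(-b/a)=0$; and so on. Thus $f^{(j)}(-b/a)=0$ for all $j\ge 0$ and $f\equiv 0$ by the Identity Theorem. (Equivalently, the single relation for $k=m$ says $\bigl(\overline{a}\tfrac{d}{dz}+\overline{b}\bigr)^{m}f\equiv 0$, so $f(z)=p(z)e^{-\overline{b}z/\overline{a}}$ with $p\in\C_{m-1}[z]$, and the $m$ vanishing conditions at $-b/a$ kill $p$.) Note the induction needs all $k\ge m$, not just $k=m$, so it matters that item (3) supplies the whole family of kernels; as written, your proposal stops short of this closing argument.
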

\begin{proof}
The first and second items follow directly from equation \eqref{subspace-wco}. The remaining items are proved as follows. 

(3) A direct computation gives
$$
\int\limits_{\C}|\vartheta_{z,a,b,k}^{(m)}(x)|^2(1+|x|)^{-2m}e^{-|x|^2}\,dV(x)<\infty,
$$
which implies, by \eqref{equi-norm-Fock}, that $\vartheta_{z,a,b,k}\in\calF^2$. We infer
\begin{eqnarray*}
(\psi\cdot K_{z,a,b}^{[k]}\circ\varphi)(u)
&=&\psi(u)K_{z,a,b}^{[k]}(\varphi(u))=Ce^{Du}(aAu+aB+b)^k e^{(Au+B)\overline{z}}\\
&=& Ce^{B\overline{z}}e^{(D+A\overline{z})u}(aAu+aB+b)^k\\
&=&(aAu+aB+b)^m\vartheta_{z,a,b,k}^{(m)}(u)\\
&=&\phi(u)\vartheta_{z,a,b,k}^{(m)}(u).
\end{eqnarray*}

(4) It follows from the first item that $\text{dom}(\bS_{\max})\subset\calF^2(m,-b/a)$ and so
$$\text{\bf Clo}(\text{dom}(\bS_{\max}))\subset\calF^2(m,-b/a).$$
Assume that there is $f\in\calF^2(m,-b/a)$ with
$$
\inner{f}{g}=0,\quad\forall g\in\text{dom}(\bS_{\max}).
$$
By the second item we can take in the above equality $g=K_{z,a,b}^{[k]}$ for $k\in\Z_{\geq m-1}$ to obtain
$$
0=\inner{f}{K_{z,a,b}^{[k]}}=\sum_{j=0}^k\binom{k}{j}\overline{a}^j\overline{b}^{k-j}f^{(j)}(z),\quad\forall z\in\C.
$$
Using the above equation and $f\in\calF^2(m,-b/a)$, one finds $f^{(j)}(-b/a)=0$ for every $j\in\Z_{\geq 0}$ and so $f\equiv 0$.
\end{proof}

\subsection{Boundedness}
Although formula \eqref{norm-A} does not define a true seminorm, we state a observation which closes the gap between this notion and a standard norm. For entire functions $f$ and $g$, the following quantities play an important role in our boundedness investigation:
$$
\M_z(f,g)=|f(z)|^2e^{|g(z)|^2-|z|^2},\quad z\in\C,
$$
and
$$
\M(f,g)=\sup\{\M_z(f,g),z\in\C\}.
$$

\begin{thm}
Let $m\in\Z_{\geq 0}$ and $\bS_{\max}$ be a maximal multi-valued weighted composition operator induced by equation \eqref{subspace-wco}, where $\psi,\varphi,\phi$ are entire functions with $\psi,\phi\not\equiv 0$. Then $\bS_{\max}$ is bounded, if the following conditions hold.
\begin{enumerate}
\item The function
$$\widehat{\psi}:\C\to\C,\quad\widehat{\psi}(z)=\psi(z)[\varphi(z)]^m [\phi(z)]^{-1}$$
is entire.
\item $\M(\widehat{\psi},\varphi)<\infty$.
\end{enumerate}
In this case, the symbol $\varphi$ takes of the form $\varphi(z)=Az+B$ with $|A|\leq 1$.
\end{thm}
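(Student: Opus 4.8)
The plan is to prove the two assertions separately: first that conditions (1) and (2) force $\varphi$ to be an affine map $Az+B$ with $|A|\le 1$, and then that under (1) and (2) the operator $\bS_{\max}$ has finite norm $\|\bS_{\max}\|=\|\bfS_{\max}\|$ in the sense of \eqref{norm-A}, where $\bfS_{\max}=\calQ_{\bS_{\max}}\bS_{\max}$. For the shape of $\varphi$: if $\widehat{\psi}\equiv 0$, then $\psi\varphi^m=\widehat{\psi}\phi\equiv 0$, and since the ring of entire functions is an integral domain and $\psi\not\equiv 0$, this forces $m\ge 1$ and $\varphi\equiv 0$, so that $\varphi(z)=0\cdot z+0$ with $|A|=0\le 1$. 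So assume $\widehat{\psi}\not\equiv 0$. Taking logarithms in condition (2) gives $2\log|\widehat{\psi}(z)|+|\varphi(z)|^2-|z|^2\le\log\M(\widehat{\psi},\varphi)$ for every $z\in\C$ (vacuously where $\widehat{\psi}$ vanishes). I would average this over the circle $|z|=r$: Jensen's formula for the entire function $\widehat{\psi}$ bounds $\tfrac1{2\pi}\int_0^{2\pi}\log|\widehat{\psi}(re^{i\theta})|\,d\theta$ from below by a constant uniformly for $r\ge 1$, while Parseval's identity gives $\tfrac1{2\pi}\int_0^{2\pi}|\varphi(re^{i\theta})|^2\,d\theta=\sum_{k\ge 0}|c_k|^2 r^{2k}$ when $\varphi(z)=\sum_{k\ge 0}c_k z^k$. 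Hence $\sum_{k\ge 0}|c_k|^2 r^{2k}\le r^2+C$ for all large $r$; dividing by $r^2$ and letting $r\to\infty$ forces $c_k=0$ for $k\ge 2$ and $|c_1|^2\le 1$, that is $\varphi(z)=Az+B$ with $A=c_1$ and $|A|\le 1$.

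For the boundedness, fix $f\in\text{dom}(\bS_{\max})$ and $g\in\bS_{\max}(f)$; using $\widehat{\psi}=\psi\varphi^m/\phi$, equation \eqref{subspace-wco} becomes the identity of entire functions $g^{(m)}(z)=\widehat{\psi}(z)f(\varphi(z))/\varphi(z)^m$. The degenerate case $A=0$ (i.e.\ $\varphi$ constant) and the case $\widehat{\psi}\equiv 0$ I would dispose of directly: there $g^{(m)}$ is a fixed scalar multiple of $f(B)$ times an entire function not depending on $f$ — namely $\widehat{\psi}$, or $\psi/\phi$, which is entire by (1) when $B\ne 0$ — so on $\text{dom}(\bS_{\max})$ one has $\|\bS_{\max}(f)\|\le c_0|f(B)|\le c_0 e^{|B|^2/2}\|f\|$ for a suitable constant $c_0\ge 0$ ($c_0=0$ when $\text{dom}(\bS_{\max})$ contracts to $\{f:f(B)=0\}$). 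So assume $A\ne 0$. When $m=0$, $\bS_{\max}(0)=\{0\}$ and $\|\bS_{\max}(f)\|=\|g\|$ with $g=\widehat{\psi}\cdot(f\circ\varphi)$; from $|\widehat{\psi}(z)|^2 e^{-|z|^2}=\M_z(\widehat{\psi},\varphi)e^{-|\varphi(z)|^2}\le\M(\widehat{\psi},\varphi)e^{-|\varphi(z)|^2}$ and the substitution $w=\varphi(z)$ one obtains $\|\bS_{\max}(f)\|^2\le|A|^{-2}\M(\widehat{\psi},\varphi)\|f\|^2$. When $m\ge 1$, Proposition \ref{prop-must-multi} gives $\bS_{\max}(0)=\C_{m-1}[z]$ (here $\phi\not\equiv 0$ is used), a finite-dimensional, hence closed, subspace, so $\|\bS_{\max}(f)\|=\text{dist}(g,\C_{m-1}[z])$; subtracting from $g$ its Taylor polynomial $p_g$ at $0$ of degree $\le m-1$ (so $(g-p_g)^{(j)}(0)=0$ for $j<m$ and $(g-p_g)^{(m)}=g^{(m)}$) and applying the norm equivalence \eqref{equi-norm-Fock} with $n=m$ yields
\[
\|\bS_{\max}(f)\|\le\|g-p_g\|\le\frac1{\Delta_1}\left(\int_\C|g^{(m)}(z)|^2(1+|z|)^{-2m}e^{-|z|^2}\,dV(z)\right)^{1/2}.
\]

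It remains to bound this last integral by a fixed multiple of $\|f\|^2$. Write $N=\M(\widehat{\psi},\varphi)$ and let $z_*=-B/A$ be the only zero of $\varphi$. On $\{|z-z_*|\ge 1\}$ I would use $|\widehat{\psi}(z)|^2 e^{-|z|^2}=\M_z(\widehat{\psi},\varphi)e^{-|\varphi(z)|^2}\le N e^{-|\varphi(z)|^2}$ and substitute $w=\varphi(z)=A(z-z_*)$, which keeps the product $|f(w)|^2 e^{-|w|^2}$ intact; since $|w|\ge|A|$ there, $1+|z|\ge\delta(1+|w|)$ for a suitable $\delta=\delta(A,B)>0$, and $|w|^{-2m}\le|A|^{-2m}$ and $(1+|w|)^{-2m}\le 1$ on $\{|w|\ge|A|\}$, this portion is at most a constant times $\int_\C|f(w)|^2 e^{-|w|^2}\,dV(w)=\pi\|f\|^2$. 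On the compact disk $\{|z-z_*|<1\}$, where the singular factor $\varphi^{-m}$ blocks such an estimate, I would instead note that $g^{(m)}$, being entire, is dominated on the disk — by a Cauchy estimate — by its maximum modulus over $|\zeta-z_*|=2$; on that circle $|\varphi(\zeta)|=2|A|\ne 0$, so $|g^{(m)}(\zeta)|=(2|A|)^{-m}|\widehat{\psi}(\zeta)|\,|f(\varphi(\zeta))|\le(2|A|)^{-m}e^{2|A|^2}\bigl(\max_{|\zeta-z_*|=2}|\widehat{\psi}|\bigr)\|f\|$, whence $|g^{(m)}(z)|\le C_2\|f\|$ on the disk and this portion is at most $\pi C_2^2\|f\|^2$. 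Adding the two contributions and feeding them into the previous display gives $\|\bS_{\max}(f)\|\le M\|f\|$ with $M$ independent of $f$, so $\bS_{\max}$ is bounded.

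The step I expect to cost the most is controlling the singular factor $\varphi(z)^{-m}$: it blows up precisely at $z_*$, where $g^{(m)}$ remains entire only because of a cancellation between the zeros of $\widehat{\psi}$ and those of $f\circ\varphi$, and the convenient inequality $|\widehat{\psi}|^2 e^{-|z|^2}\le\M(\widehat{\psi},\varphi)e^{-|\varphi|^2}$ throws that cancellation away — so a naive estimate near $z_*$ fails, and the Cauchy-integral detour on a fixed circle around $z_*$ (on which $\varphi$ is zero-free) is what saves the argument. The degenerate configurations ($\widehat{\psi}\equiv 0$, or $\varphi$ constant) are a minor nuisance, handled separately since there $\text{dom}(\bS_{\max})$ contracts and boundedness — often with norm $0$ — is essentially automatic.
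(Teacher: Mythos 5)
Your proposal is correct, and for the central estimate it follows the same skeleton as the paper: bound $\|\bS_{\max}(f)\|$ by $\|g-p\|$ with $p$ the degree-$(m-1)$ Taylor polynomial of $g$ at $0$ (using $\bS_{\max}(0)=\C_{m-1}[z]$ from Proposition \ref{prop-must-multi}), pass via \eqref{equi-norm-Fock} to $\int_\C|g^{(m)}(z)|^2(1+|z|)^{-2m}e^{-|z|^2}\,dV(z)$, insert $|\widehat{\psi}(z)|^2e^{-|z|^2}\leq\M(\widehat{\psi},\varphi)e^{-|\varphi(z)|^2}$, and change variables $w=\varphi(z)$. You diverge at two points. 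First, the paper obtains $\varphi(z)=Az+B$ with $|A|\leq 1$ by citing \cite[Proposition 2.1]{le2014normal}, whereas you reprove it by circle averages (Jensen's formula from below, Parseval for $|\varphi|^2$); this makes the argument self-contained at essentially no cost. Second, and more substantively, the singular factor $|\varphi(z)|^{-2m}$ is tamed differently: the paper invokes Lemma \ref{Guo-ineq} to dominate the full integral by the integral over $|z|\geq R$, where $|\varphi(z)|^{-2m}(1+|z|)^{-2m}\leq 1$, thus discarding a neighbourhood of the zero $-B/A$ wholesale, while you split near/far from $-B/A$ and control the near part by the maximum principle on the fixed circle $|\zeta+B/A|=2$ together with $|f(\varphi(\zeta))|\leq e^{|\varphi(\zeta)|^2/2}\|f\|$; both work, Guo's inequality giving a one-line reduction already in the paper's toolkit, your local estimate being more elementary and making explicit why the zero of $\varphi$ is harmless. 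You also treat the degenerate configurations ($A=0$ or $\widehat{\psi}\equiv 0$), which the paper's proof passes over silently (its final substitution tacitly assumes $A\neq 0$); your treatment there is terse --- when $\varphi\equiv 0$, condition (1) does not by itself make $\psi/\phi$ entire, and one should add that either $\mathrm{dom}(\bS_{\max})\subseteq\{f:f(0)=0\}$, giving norm zero, or one witness $g_0\in\calF^2$ forces $\psi/\phi=g_0^{(m)}/f_0(0)$ and the finiteness of the relevant integral --- but the conclusion is right, and the unused inequality $1+|z|\geq\delta(1+|w|)$ in your far-region estimate can simply be dropped.
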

\begin{proof}
It follows from item (2) and \cite[Proposition 2.1]{le2014normal}, that $\varphi(z)=Az+B$, where $A,B$ are complex constants with $|A|\leq 1$. For $(f,g)\in G(\bS_{\max})$ one finds
$$
\psi(z)f(\varphi(z))=\phi(z)g^{(m)}(z),\quad\forall z\in\C.
$$
Let $p\in\C_{m-1}[z]$ with
$$
p^{(j)}(0)=g^{(j)}(0),\quad\forall j\in\{0,\cdots,m-1\}.
$$
Consequently
\begin{eqnarray*}
&&\|[g]\|^2\\
&&\leq\|g-p\|^2\leq\Delta_1^{-1}\left(\int\limits_{\C}|g^{(m)}(z)|^2(1+|z|)^{-2m}e^{-|z|^2}\,dV(z)\right)\quad\text{(by \eqref{equi-norm-Fock})}\\
&&\leq\Delta_1^{-1}\Delta_3\left(\int\limits_{|z|\geq R}|g^{(m)}(z)|^2(1+|z|)^{-2m}e^{-|z|^2}\,dV(z)\right)\quad\text{(by Lemma \ref{Guo-ineq})}\\
&&=\Delta_1^{-1}\Delta_3\left(\int\limits_{|z|\geq R}|\widehat{\psi}(z)f(\varphi(z))|^2|\varphi(z)|^{-2m}(1+|z|)^{-2m}e^{-|z|^2}\,dV(z)\right)\\
&&\leq\Delta_1^{-1}\Delta_3\left(\int\limits_{|z|\geq R}|\widehat{\psi}(z)f(\varphi(z))|^2e^{-|z|^2}\,dV(z)\right),
\end{eqnarray*}
which implies, as $\M(\widehat{\psi},\varphi)<\infty$, that
\begin{eqnarray*}
\|[g]\|^2
&\leq&\Delta_1^{-1}\Delta_3\M(\widehat{\psi},\varphi)\left(\int\limits_{|z|\geq R}|f(\varphi(z))|^2e^{-|\varphi(z)|^2}\,dV(z)\right).
\end{eqnarray*}
We reach the conclusion via the change of variables $x=\varphi(z)$ and Lemma \ref{Guo-ineq}.
\end{proof}

\subsection{Zeros of the symbols}

\begin{prop}\label{basic-lemma-psi-ne-va}
Let $m\in\Z_{\geq 0}$, $\calC$ be any conjugation on $\calF^2$ and $\bS$ be a multi-valued weighted composition operator induced by equation \eqref{subspace-wco}, where $\psi\not\equiv 0$, $\phi\not\equiv 0$ and $\varphi\not\equiv\text{const}$.
\begin{enumerate}
\item For every $z\in\C$, we have $\phi(z)\calC(K_z^{[m]})\in\text{dom}(\bS_{\calC}^*)$, and furthermore,
$$
\psi(z)\calC(K_{\varphi(z)})\in\bS_{\calC}^*\left(\phi(z)\calC(K_z^{[m]})\right).
$$
\item If the inclusion 
\begin{equation*}
\begin{cases}
\text{either $G(\bS_{\calC}^*)\subset G(\bS)$},\\
\text{or $G(\bS^*)\subset G(\bS)$},
\end{cases}
\end{equation*}
holds, then 
\begin{enumerate}
\item $\zer(\psi)\subset\zer(\phi)$.
\item for every $\alpha\in\zer(\psi)$ we have $\emph{ord}(\alpha,\psi)\leq\emph{ord}(\alpha,\phi)$.
\end{enumerate}
\end{enumerate}
\end{prop}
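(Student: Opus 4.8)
The plan splits along the two statements. For assertion (1), I would simply transport the generic pair produced in Proposition~\ref{basic-lemma} through the conjugation. That proposition gives $(\overline{\phi(z)}K_z^{[m]},\overline{\psi(z)}K_{\varphi(z)})\in G(\bS^*)$ for every $z\in\C$, and Remark~\ref{equi-AC*=CAC} states $G(\bS^*_\calC)=\{(\calC x,\calC y):(x,y)\in G(\bS^*)\}$. Applying $\calC$ to both coordinates and using that $\calC$ is conjugate-linear (so it turns the scalars $\overline{\phi(z)},\overline{\psi(z)}$ into $\phi(z),\psi(z)$) produces $(\phi(z)\calC(K_z^{[m]}),\psi(z)\calC(K_{\varphi(z)}))\in G(\bS^*_\calC)$, which is exactly what (1) asserts, in graph form.

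For (2), assume first that $G(\bS^*)\subseteq G(\bS)$. Then the pair of Proposition~\ref{basic-lemma} lies in $G(\bS)\subseteq G(\bS_{\max})$, hence it solves \eqref{subspace-wco}. Writing this out for $f(x)=\overline{\phi(z)}x^m e^{x\overline z}$ and $g(w)=\overline{\psi(z)}e^{w\overline{\varphi(z)}}$ yields the two--parameter identity
$$\psi(w)\,\varphi(w)^m\,\overline{\phi(z)}\,e^{\varphi(w)\overline z}=\phi(w)\,\overline{\psi(z)}\,\overline{\varphi(z)}^m\,e^{w\overline{\varphi(z)}},\qquad w,z\in\C.$$
Since $\psi,\phi\not\equiv0$ and $\varphi$ is nonconstant, their zero sets are discrete, so I can fix $z_0$ with $\psi(z_0)\phi(z_0)\varphi(z_0)\neq0$; the identity then reads $\phi(w)=c\,\psi(w)\,\varphi(w)^m e^{\ell(w)}$ with $c\neq0$ and $\ell$ entire, hence $e^{\ell}$ zero-free. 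Comparing orders of vanishing at a point gives $\text{ord}(\alpha,\phi)=\text{ord}(\alpha,\psi)+m\,\text{ord}(\alpha,\varphi)$, which contains both (a), namely $\zer(\psi)\subseteq\zer(\phi)$, and (b), the order inequality.

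Now assume instead $G(\bS^*_\calC)\subseteq G(\bS)$. By (1) the pair $(\phi(z)\calC(K_z^{[m]}),\psi(z)\calC(K_{\varphi(z)}))$ lies in $G(\bS)$, hence solves \eqref{subspace-wco}:
$$\psi(w)\,\phi(z)\,(\calC K_z^{[m]})(\varphi(w))=\phi(w)\,\psi(z)\,(\calC K_{\varphi(z)})^{(m)}(w),\qquad w,z\in\C.$$
Fix $\alpha\in\zer(\psi)$ and set $p=\text{ord}(\alpha,\psi)\geq1$. The crux is to produce $z_0$ with $\psi(z_0)\phi(z_0)\neq0$ and $(\calC K_{\varphi(z_0)})^{(m)}(\alpha)\neq0$. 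If no such $z_0$ existed, then $\langle\calC K_{\varphi(z)},K_\alpha^{[m]}\rangle=(\calC K_{\varphi(z)})^{(m)}(\alpha)=0$ on the connected open set $\Omega=\{z:\psi(z)\phi(z)\neq0\}$; since $\varphi|_\Omega$ is nonconstant, $\varphi(\Omega)$ contains a nonempty open set $V$ by the open mapping theorem, while $\{K_u:u\in V\}$—and hence $\{\calC K_u:u\in V\}$, because $\calC$ is a conjugation—has dense linear span in $\calF^2$ (reproducing property plus the identity theorem). This forces $K_\alpha^{[m]}=0$, which is absurd. With such a $z_0$ fixed, setting $z=z_0$ in the displayed identity gives $\phi(w)\,(\calC K_{\varphi(z_0)})^{(m)}(w)=c'\,\psi(w)\,(\calC K_{z_0}^{[m]})(\varphi(w))$ with $c'\neq0$; the right-hand side vanishes to order $\geq p$ at $w=\alpha$, whereas the left-hand side vanishes to order exactly $\text{ord}(\alpha,\phi)$, whence $\text{ord}(\alpha,\phi)\geq p$, which yields (a) and (b) simultaneously.

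The only genuinely delicate point is the selection of the parameter $z_0$ in the $\calC$--case, where $\calC$ is not given explicitly; the remainder is bookkeeping with orders of zeros of entire functions. I expect that density/open-mapping argument to be the place where care is required, and the explicit--kernel case to be essentially a direct computation.
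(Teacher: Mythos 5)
Your argument is correct, and your part (1) coincides with the paper's (Proposition \ref{basic-lemma} combined with Remark \ref{equi-AC*=CAC}), but your part (2) takes a genuinely different route. The paper localizes at the zero itself: if $\alpha\in\zer(\psi)\setminus\zer(\phi)$, it feeds the pair of item (1) at $z=\alpha$ into the inclusion, so the second component $\psi(\alpha)\calC(K_{\varphi(\alpha)})$ vanishes, equation \eqref{subspace-wco} then forces $\calC(K_\alpha^{[m]})\circ\varphi\equiv 0$, hence $\calC(K_\alpha^{[m]})=0$, a contradiction; the order inequality (2b) is obtained afterwards by factoring $(z-\alpha)^{q}$ out of both symbols and applying (2a) to the equivalent triple $\bigl((z-\alpha)^{p-q}\psi_*,\varphi,\phi_*\bigr)$. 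You instead evaluate the two-parameter identity at a generic $z_0$ and count orders of vanishing at $\alpha$ directly: in the $\bS^*$-case this yields the exact relation $\mathrm{ord}(\alpha,\phi)=\mathrm{ord}(\alpha,\psi)+m\,\mathrm{ord}(\alpha,\varphi)$, indeed the explicit factorization $\phi=c\,\psi\,\varphi^m e^{\ell}$ with $e^{\ell}$ zero-free, which is stronger than what is claimed and anticipates the symbol computation of Lemma \ref{lemma-tech-her}; in the $\calC$-case you pay for the abstractness of $\calC$ with the nondegeneracy selection of $z_0$, and your open-mapping/density argument handles this correctly (the span of $\{\calC K_u:u\in V\}$ is dense since $\calC$ is an anti-linear surjective isometry, so orthogonality to it would force $K_\alpha^{[m]}=0$). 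The trade-off: the paper's choice $z=\alpha$ makes the right-hand side vanish for free, so it avoids any density step and is shorter, but needs the separate symbol-equivalence reduction for (2b); your generic-point argument delivers (2a) and (2b) in one stroke and gives sharper information in the $\bS^*$-case, at the cost of the extra selection argument when only the abstract conjugation is available. Note also that the paper writes out only the $\calC$-case and calls the other ``similar,'' whereas in your treatment the plain-adjoint case is the easy explicit computation and the $\calC$-case is the delicate one.
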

\begin{proof}
The first assertion follows from Proposition \ref{basic-lemma} and Remark \ref{equi-AC*=CAC}.

(2a) We provide the proof for the case $G(\bS_{\calC}^*)\subset G(\bS)$ and omit the case $G(\bS^*)\subset G(\bS)$ as their arguments are quite similar. Assume by contradiction that $\alpha\in\zer(\psi)\cap [\C\setminus\zer(\phi)]$. Then there is a neighborhood $V$ of $\alpha$ such that $\psi(x)\ne 0$ for every $x\in V\setminus\{\alpha\}$. It follows from the first assertion that
$$
0=\psi(\alpha)\calC(K_{\varphi(\alpha)})\in\bS_{\calC}^*\left(\phi(\alpha)\calC(K_\alpha^{[m]})\right).
$$
But $\bS$ is $\calC$-selfadjoint, and hence
$$
(\phi(\alpha)\calC(K_\alpha^{[m]}),0)\in G(\bS).
$$
Consequently, taking into account the form of $\bS$, we get
$$
\psi(z)\phi(\alpha)\calC(K_\alpha^{[m]})(\varphi(z))=0,\quad\forall z\in\C,
$$
which implies, as $\phi(\alpha)\ne 0$, that $\calC(K_\alpha^{[m]})=0$. But this is impossible.

(2b) Assume by contradiction that there is $\alpha\in\zer(\psi)$ with the property
$$
\text{ord}(\alpha,\psi)>\text{ord}(\alpha,\phi).
$$
To simplify notation, we set $p=\text{ord}(\alpha,\psi)$ and $q=\text{ord}(\alpha,\phi)$. Then there are entire functions $\psi_*$ and $\phi_*$ such that $\psi_*(\alpha)\ne 0$, $\phi_*(\alpha)\ne 0$ and
$$
\psi(x)=(x-\alpha)^p\psi_*(x),\quad\phi(x)=(x-\alpha)^q\phi_*(x),\quad x\in\C.
$$
Equation \eqref{subspace-wco} is equivalent to the following identity
\begin{equation}\label{sub-wco-equiv}
(z-\alpha)^{p-q}\psi_*(z)f(\varphi(z))=\phi_*(z)g^{(m)}(z),\quad\forall z\in\C.
\end{equation} 
Hence, $(f,g)\in G(\bS)$ if and only if it verifies equation \eqref{sub-wco-equiv}.  In view of (2b), we reach a contradiction.
\end{proof}

\begin{rem}
It should be noted that two triples of symbols $(\psi,\varphi,\phi)$ and $(\psi_1,\varphi,\phi_1)$ give rise to the same operator $\bS_{\max}$ whenever $\phi/\psi=\phi_1/\psi_1$. In this case, such triples are called \emph{equivalent}.
\end{rem}

\noindent{\bf Assumption:} 
\begin{equation}\label{rem-vary-impot}
{\it Throughout \ this \ article \ we\  assume\  that \ the  \ symbol} \ \psi {\it \  has\  no \ zeros. }
\end{equation}

Note that in the case of single-valued weighted composition operators, Assumption \eqref{rem-vary-impot} is automatically satisfied (see \cite{hai2018unbounded}). If Assumption \eqref{rem-vary-impot} holds, then a triple $(\psi,\varphi,\phi)$ is equivalent to $(1,\varphi,\phi/\psi)$.

\begin{prop}\label{prop-unitary}
Let $m\in\Z_{\geq 0}$ and $\bS$ be a multi-valued weighted composition operator induced by equation \eqref{subspace-wco}, where $\psi\not\equiv 0$, $\phi\not\equiv 0$, and $\varphi\not\equiv\text{const}$. If the inclusion $G(\bS^*)\subset G(\bS^{-1})$ holds, then there is an equivalent set of symbols, which has the following form
$$
\phi\equiv 1,\quad \varphi(z)=Az+B,\quad z\in\C,
$$
where $A,B$ are complex constants with $A\ne 0$.
\end{prop}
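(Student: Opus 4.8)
The plan is to feed the kernel-type members of $G(\bS^{*})$ supplied by Proposition~\ref{basic-lemma} into the hypothesis $G(\bS^{*})\subset G(\bS^{-1})$ and convert the resulting graph membership into a functional equation for the symbols. Since $(\overline{\phi(z)}K_{z}^{[m]},\,\overline{\psi(z)}K_{\varphi(z)})\in G(\bS^{*})$ for every $z\in\C$, the inclusion places the flipped pair $(\overline{\psi(z)}K_{\varphi(z)},\,\overline{\phi(z)}K_{z}^{[m]})$ in $G(\bS)\subset G(\bS_{\max})$; substituting it into \eqref{subspace-wco} and taking the $m$-th derivative of $K_{z}^{[m]}$ gives, for all $w,z\in\C$,
$$
\psi(w)\overline{\psi(z)}\,e^{\varphi(w)\overline{\varphi(z)}}=\phi(w)\overline{\phi(z)}\,e^{w\overline{z}}\,P_{z}(w),\qquad
P_{z}(w):=e^{-w\overline{z}}\,\frac{d^{m}}{dx^{m}}\Big[x^{m}e^{x\overline{z}}\Big]\Big|_{x=w}.
$$
By Leibniz' rule $P_{z}$ is, for $z\neq0$, a polynomial in $w$ of degree exactly $m$ with leading coefficient $\overline{z}^{\,m}$ (and $P_{z}\equiv m!$ when $z=0$, $P_{z}\equiv 1$ when $m=0$).

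Next I read off the form of $\phi$ and the value of $m$. By the standing Assumption~\eqref{rem-vary-impot} the symbol $\psi$ is zero-free, so for each fixed $z$ the left-hand side above is a nowhere-vanishing, not identically zero, entire function of $w$; hence so is the right-hand side. This first forces $\phi(z)\neq0$ for every $z$, since otherwise the right-hand side would vanish identically, so $\phi$ has no zeros; it then forces $P_{z}(w)\neq0$ for all $w$, and choosing any $z\neq0$ a polynomial of positive degree always has a root, so necessarily $m=0$. Because $\phi$ is now known to be zero-free, the triple $(\psi,\varphi,\phi)$ is equivalent, in the sense of the Remark preceding \eqref{rem-vary-impot}, to $(\psi/\phi,\varphi,1)$; after this harmless renaming we may assume $\phi\equiv1$, the new $\psi$ still being zero-free. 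With $\phi\equiv1$ and $m=0$ the identity collapses to
$$
\psi(w)\overline{\psi(z)}\,e^{\varphi(w)\overline{\varphi(z)}}=e^{w\overline{z}},\qquad \forall\,w,z\in\C.
$$

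It remains to show $\varphi$ is affine. Setting $z=0$ solves $\psi(w)=\overline{\psi(0)}^{-1}e^{-\overline{\varphi(0)}\varphi(w)}$ (valid since $\psi(0)\neq0$); reinserting this and the corresponding expression for $\overline{\psi(z)}$, and evaluating at $w=z=0$ to fix $|\psi(0)|^{2}=e^{-|\varphi(0)|^{2}}$, one is left after cancellation with $e^{(\varphi(w)-\varphi(0))\overline{(\varphi(z)-\varphi(0))}}=e^{w\overline{z}}$. For fixed $z$ both exponents are entire in $w$ and the two sides agree at $w=0$, so the exponents differ by a $w$-independent element of $2\pi i\Z$, which must be $0$; hence $(\varphi(w)-\varphi(0))\overline{(\varphi(z)-\varphi(0))}=w\overline{z}$ identically. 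Writing $\eta=\varphi-\varphi(0)$, the choice $z=w$ gives $|\eta(w)|=|w|$, and the choice $z=1$ (so $|\eta(1)|=1$) gives $\eta(w)=\overline{\eta(1)}^{-1}w$; therefore $\varphi(z)=Az+B$ with $A=\overline{\eta(1)}^{-1}$ of modulus $1$ (hence $A\neq0$) and $B=\varphi(0)$. Together with $\phi\equiv1$ this is the asserted conclusion.

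The bookkeeping aside, the two points that call for a little care are the normalization $\phi\equiv1$ — admissible only once $\phi$ has been shown to be zero-free — and the step from $e^{F}=e^{G}$ to $F=G$, which one justifies by matching values at the origin and invoking connectedness rather than by choosing branches of the logarithm. It is also worth noting that the rigidity assertion $m=0$, i.e.\ that such an operator is necessarily single-valued, already emerges at the second stage from the bare incompatibility between a zero-free left-hand side and the polynomial factor $P_{z}$ when $m\geq1$.
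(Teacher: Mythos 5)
Your proposal is correct, and its opening move coincides with the paper's: feed the kernel pairs from Proposition \ref{basic-lemma} into the inclusion $G(\bS^*)\subset G(\bS^{-1})$ and land on the identity \eqref{dkm-go} via \eqref{derivative-Kz[m]}. After that the two arguments diverge. The paper stays with arbitrary $m$: it sets $x=z$ to get $\zer(\psi)=\zer(\phi)$, rearranges \eqref{dkm-go} to pass to an equivalent triple with $\phi\equiv1$, and then proves that $\varphi$ is injective by a second, more delicate use of the inclusion (forming $0\in\bS^*(\overline{\psi(z_2)}K_{z_1}^{[m]}-\overline{\psi(z_1)}K_{z_2}^{[m]})$, pushing it into $\bS(0)$ and comparing $(K_{z_1}^{[m]})^{(m)}$ with $(K_{z_2}^{[m]})^{(m)}$), finally invoking the classical fact \cite[Exercise 14, Chapter 3]{SS} that an injective entire function is affine. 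You instead exploit the standing Assumption \eqref{rem-vary-impot} (which the paper's proof also uses, at the step ``as $\psi$ is nowhere vanishing'') right away: zero-freeness of the left side of \eqref{dkm-go} kills the zeros of $\phi$ and, via the fundamental theorem of algebra applied to the degree-$m$ polynomial factor $P_z$, forces $m=0$; with $m=0$ and $\phi\equiv1$ you then solve the functional equation $\psi(w)\overline{\psi(z)}e^{\varphi(w)\overline{\varphi(z)}}=e^{w\overline z}$ explicitly, getting $\varphi(z)=Az+B$ with $|A|=1$ (and essentially the explicit form of $\psi$) without any appeal to the injectivity theorem. Both derivations are sound; what each buys is different. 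The paper's route keeps the proposition's conclusion minimal and uniform in $m$, deferring $m=0$ and $|A|=1$ to the unitarity theorem, whereas your route is shorter on the affineness step but is only available after the reduction to $m=0$, and it proves strictly more than the statement asks — namely that, under the zero-free hypothesis on $\psi$, the mere inclusion $G(\bS^*)\subset G(\bS^{-1})$ already pins down $m=0$, $|A|=1$ and the shape of $\psi$, which would render part of the case analysis in the paper's final section redundant. The two points you flag yourself (legitimacy of normalizing $\phi\equiv1$ only after $\phi$ is shown zero-free, and passing from $e^F=e^G$ to $F=G$ by continuity of the $2\pi i\Z$-valued difference) are handled correctly.
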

\begin{proof}
Let $z\in\C$. Proposition \ref{basic-lemma} reveals that $(\overline{\phi(z)}K_z^{[m]},\overline{\psi(z)}K_{\varphi(z)})\in G(\bS^*)\subset G(\bS^{-1})$ and so
$$
(\overline{\psi(z)}K_{\varphi(z)},\overline{\phi(z)}K_z^{[m]})\in G(\bS).
$$
Consequently, taking into account the explicit form of $\bS$, we have
$$
\psi(x)\overline{\psi(z)} K_{\varphi(z)}(\varphi(x))=\phi(x)\overline{\phi(z)}(K_z^{[m]})^{(m)}(x),\quad\forall x\in\C,
$$
which implies, as
\begin{equation}\label{derivative-Kz[m]}
(K_z^{[m]})^{(m)}(x)=\sum_{j=0}^m\binom{m}{j}m(m-1)\cdots (m-j+1)x^{m-j}\overline{z}^{m-j}K_z(x),
\end{equation}
that
\begin{eqnarray}\label{dkm-go}
\nonumber&&\psi(x)\overline{\psi(z)} K_{\varphi(z)}(\varphi(x))\\
&&=\phi(x)\overline{\phi(z)}\sum_{j=0}^m\binom{m}{j}m(m-1)\cdots (m-j+1)x^{m-j}\overline{z}^{m-j}K_z(x)
\end{eqnarray}
for every $x,z\in\C$. In particular with $x=z$, we get
$$
|\psi(z)|^2 e^{|\varphi(z)|^2}=|\phi(z)|^2\sum_{j=0}^m\binom{m}{j}m(m-1)\cdots (m-j+1)|z|^{2(m-j)}e^{|z|^2},\quad\forall z\in\C.
$$
The above inequality shows that $\zer(\psi)=\zer(\phi)$. Since $\psi\not\equiv 0$, there is $z_0\in\C$ with $\psi(z_0)\ne 0$. It follows from \eqref{dkm-go}, that
\begin{eqnarray*}
\psi(x)
&=&\phi(x)\overline{\psi(z_0)^{-1}\phi(z_0)}\sum_{j=0}^m\binom{m}{j}m(m-1)\cdots (m-j+1)\\
&&\quad\times x^{m-j}\overline{z_0}^{m-j}e^{x\overline{z_0}-\varphi(x)\overline{\varphi(z_0)}},
\end{eqnarray*}
and so we can suppose $\phi\equiv 1$.

To prove $\varphi(z)=Az+B$, where $A\ne 0$, it suffices to show that $\varphi$  is injective, see \cite[Exercise 14, Chapter 3]{SS}. Indeed, suppose that $\varphi(z_1)=\varphi(z_2)$ for some $z_1,z_2\in\C$. Proposition \ref{basic-lemma} implies
$$
\overline{\psi(z_1)\psi(z_2)}K_{\varphi(z_1)}\in\bS^*(\overline{\psi(z_2)}K_{z_1}^{[m]})
$$
and
$$
\overline{\psi(z_1)\psi(z_2)}K_{\varphi(z_2)}\in\bS^*(\overline{\psi(z_1)}K_{z_2}^{[m]}).
$$
Hence, by linearity we find
$$
0\in\bS^*(\overline{\psi(z_2)}K_{z_1}^{[m]}-\overline{\psi(z_1)}K_{z_2}^{[m]}),
$$
which implies, as $G(\bS^*)\subset G(\bS^{-1})$, that
$$
\overline{\psi(z_2)}K_{z_1}^{[m]}-\overline{\psi(z_1)}K_{z_2}^{[m]}\in\bS(0);
$$
namely, the pair $(0,\overline{\psi(z_2)}K_{z_1}^{[m]}-\overline{\psi(z_1)}K_{z_2}^{[m]})$ verifies equation \eqref{subspace-wco}. This equation, together with \eqref{derivative-Kz[m]}, implies
\begin{equation}\label{eq-tired}
0=\sum_{j=0}^m\alpha_j x^{m-j}[\overline{z_1}^{m-j}\overline{\psi(z_2)}e^{x\overline{z_1}}-\overline{z_2}^{m-j}\overline{\psi(z_1)}e^{x\overline{z_2}}],\quad\forall x\in\C,
\end{equation}
where
$$
\alpha_j=\binom{m}{j}m(m-1)\cdots (m-j+1).
$$
In particular taking $x=0$ one finds $\overline{\psi(z_2)}=\overline{\psi(z_1)}\ne 0$ (as $\psi$ is nowhere vanishing); substituting back into equation \eqref{eq-tired}:
$$
0=\sum_{j=0}^m\alpha_j x^{m-j}[\overline{z_1}^{m-j}e^{x\overline{z_1}}-\overline{z_2}^{m-j}e^{x\overline{z_2}}],\quad\forall x\in\C.
$$
Hence, again by \eqref{derivative-Kz[m]}, we see $(K_{z_1}^{[m]})^{(m)}=(K_{z_2}^{[m]})^{(m)}$, that is
$$
K_{z_1}^{[m]}(x)=K_{z_2}^{[m]}(x)+p(x)
$$
for some $p\in\C_{m-1}[z]$. Subsequently, taking into account the explicit forms of $K_{z_1}^{[m]}$ and $K_{z_2}^{[m]}$, we must have $p(x)=\alpha x^m$ for some $\alpha=\alpha(z_1,z_2)\in\C$. Substituting this form of $p$ back into the equality above, we find $\alpha=0$ and $z_1=z_2$.
\end{proof}

\subsection{Dense domain}
Not unexpected, it turns out that if the domain of a multi-valued weighted composition operator is dense, then its adjoint is single-valued.
\begin{prop}
If the domain $\text{dom}(\bS)$ is dense, then $\bS^*$ is a single-valued operator.
\end{prop}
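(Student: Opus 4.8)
The plan is to unwind the two definitions involved: what it means for $\bS^*$ to be single-valued, and what it means for a pair to lie in $G(\bS^*)$. Recall that a multi-valued operator $\A$ is single-valued precisely when $\A(0)=\{0\}$, equivalently when the only $v$ with $(0,v)\in G(\A)$ is $v=0$. So it suffices to show $\bS^*(0)=\{0\}$.

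First I would take an arbitrary $v\in\bS^*(0)$, i.e. $(0,v)\in G(\bS^*)$. By Definition \ref{S-adjoint} (with $S$ the identity), this means $\langle g,0\rangle=\langle f,v\rangle$ for every $(f,g)\in G(\bS)$, hence $\langle f,v\rangle=0$ for all $f\in\text{dom}(\bS)$. Since $\text{dom}(\bS)$ is dense in $\calF^2$ by hypothesis, the continuous linear functional $\langle\,\cdot\,,v\rangle$ vanishes on a dense set, so it vanishes identically; therefore $v=0$. This shows $\bS^*(0)=\{0\}$, i.e. $\bS^*$ is single-valued.

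There is essentially no obstacle here: the statement is a direct consequence of the definition of the adjoint combined with density, and the only point requiring the slightest care is matching the convention that "single-valued" is encoded as $\A(0)=\{0\}$, which is already recorded in the excerpt. (One could equivalently phrase the argument via Remark \ref{rem-dom-S*}, but the direct route above is cleanest.)
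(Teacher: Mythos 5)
Your proof is correct and coincides with the paper's own argument: both reduce single-valuedness to showing $\bS^*(0)=\{0\}$, take $v\in\bS^*(0)$, use the definition of the adjoint to get $\langle f,v\rangle=\langle g,0\rangle=0$ for all $(f,g)\in G(\bS)$, and conclude $v=0$ from density of $\text{dom}(\bS)$. Nothing to add.
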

\begin{proof}
It is enough to show that $\bS^*(0)=\{0\}$. Indeed, let $v\in\bS^*(0)$ and then by the definition of an adjoint operator, for every $(f,g)\in G(\bS)$ we have
$$
\inner{f}{v}=\inner{g}{0}=0.
$$
Since the domain $\text{dom}(\bS)$ is dense, the last equality yields $v=0$.
\end{proof}

\section{The adjoint $\bS^*$}
As we will soon verify, the symbols of a $\calC_{a,b,c}$-selfadjoint weighted composition operator are very special. To this aim, we concentrate first on the computation of the adjoint.  The main result of this section is stated below.

\begin{thm}\label{formula-adjoint}
Let $m\in\Z_{\geq 0}$ and $a,b,c$ be complex constants satisfying \eqref{abc-cond}. Let $\bS_{\max}$ be a maximal multi-valued weighted composition operator induced by equation \eqref{subspace-wco}, where
$$\psi(z)=Ce^{Dz},\quad\varphi(z)=Az+B,\quad\phi(z)=(aAz+aB+b)^m,\quad z\in\C.$$
For
\begin{equation}\label{widehat-symbols}
\widehat{\psi}(z)=\overline{C}e^{\overline{B}z},\quad\widehat{\varphi}(z)=\overline{A}z+\overline{D},\quad\widehat{\phi}(z)=(\overline{A}z+\overline{D})^m,\quad z\in\C,
\end{equation}
where $A,B,C$, and $D$ are complex constants, with $C\ne 0$, we consider the following equation
\begin{equation}\label{eq-for-adjoin}
\widehat{\psi}(z)f(\widehat{\varphi}(z))=\widehat{\phi}(z)\sum_{j=0}^m\binom{m}{j}\overline{a}^j\overline{b}^{m-j}g^{(j)}(z).
\end{equation}
Let
\begin{eqnarray*}
\text{dom}(\widehat{\bS}_{\max})
&=&\{f\in\calF^2:\,\text{there exists $g\in\calF^2$ such that}\\
&&\text{$(f,g)$ verifies equation \eqref{eq-for-adjoin}}\},
\end{eqnarray*}
$$
\widehat{\bS}_{\max}(f)=\{g\in\calF^2:\,\text{$(f,g)$ verifies equation \eqref{eq-for-adjoin}}\}.
$$
Then  $\bS_{\max}^*=\widehat{\bS}_{\max}$.
\end{thm}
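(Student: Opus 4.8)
The plan is to prove the two graph inclusions $G(\widehat{\bS}_{\max})\subseteq G(\bS_{\max}^*)$ and $G(\bS_{\max}^*)\subseteq G(\widehat{\bS}_{\max})$ separately, using throughout the explicit description of $\bS_{\max}$ on the kernel functions $K_{z,a,b}^{[k]}$ from Proposition \ref{dom-Smax} and the factorization statements of Corollary \ref{f=z^mg}. The case $m=0$ is the classical adjoint formula for weighted composition operators on $\calF^2$ (checked by evaluating both sides on reproducing kernels and extending via the norm comparisons of Section \ref{basic-sec}); I take it for granted and concentrate on $m\geq 1$.

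\emph{The inclusion $G(\bS_{\max}^*)\subseteq G(\widehat{\bS}_{\max})$.} Let $(u,v)\in G(\bS_{\max}^*)$. Since $(0,p)\in G(\bS_{\max})$ for every $p\in\C_{m-1}[z]$ (Proposition \ref{prop-must-multi}), the adjoint relation forces $\langle p,u\rangle=0$, so $u\in\calF^2(m,0)$ and $u(z)=z^m\widetilde u(z)$ with $\widetilde u\in\calF^2$ by Corollary \ref{f=z^mg}. Now feed into the adjoint relation the test pairs $(K_{z,a,b}^{[m]},\vartheta_{z,a,b,m})\in G(\bS_{\max})$ supplied by Proposition \ref{dom-Smax}(3), obtaining $\langle\vartheta_{z,a,b,m},u\rangle=\langle K_{z,a,b}^{[m]},v\rangle$ for all $z\in\C$. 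On the left, $\langle\vartheta_{z,a,b,m},u\rangle=\langle\vartheta_{z,a,b,m},z^m\widetilde u\rangle=\langle\vartheta_{z,a,b,m}^{(m)},\widetilde u\rangle$: the degree $<m$ ambiguity in $\vartheta_{z,a,b,m}$ disappears, and the Fock-space identity $\langle F,z^mG\rangle=\langle F^{(m)},G\rangle$ applies because $z^m\widetilde u=u\in\calF^2$; substituting the closed form \eqref{vartheta} for $\vartheta_{z,a,b,m}^{(m)}$ and using the reproducing property evaluates this to $Ce^{B\overline z}\,\overline{\widetilde u(\overline A z+\overline D)}$. On the right, expanding $K_{z,a,b}^{[m]}$ into the $K_z^{[j]}$ and using $f^{(j)}(z)=\langle f,K_z^{[j]}\rangle$ gives $\sum_{j=0}^m\binom mj a^jb^{m-j}\overline{v^{(j)}(z)}$. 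Taking complex conjugates of the resulting identity and multiplying through by $(\overline A z+\overline D)^m$ — using $(\overline A z+\overline D)^m\widetilde u(\overline A z+\overline D)=u(\overline A z+\overline D)$ — yields exactly \eqref{eq-for-adjoin} with $f=u$, $g=v$. Hence $(u,v)\in G(\widehat{\bS}_{\max})$.

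\emph{The inclusion $G(\widehat{\bS}_{\max})\subseteq G(\bS_{\max}^*)$.} Let $(f,g)$ satisfy \eqref{eq-for-adjoin} and fix an arbitrary $(h,k)\in G(\bS_{\max})$; we must check $\langle k,f\rangle=\langle h,g\rangle$. Equation \eqref{eq-for-adjoin} forces $f$ to vanish to order $m$ at $0$, and \eqref{subspace-wco} forces $h$ to vanish to order $m$ at $-b/a$ (Proposition \ref{dom-Smax}(1)); write $f(z)=z^m\widetilde f(z)$, $h(z)=a^{-m}(az+b)^m h_0(z)$ with $\widetilde f,h_0\in\calF^2$ by Corollary \ref{f=z^mg}. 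Cancelling the polynomial factors turns \eqref{eq-for-adjoin} and \eqref{subspace-wco} into $\sum_{j=0}^m\binom mj\bar a^j\bar b^{m-j}g^{(j)}(z)=\overline C e^{\overline B z}\widetilde f(\overline A z+\overline D)$ and $k^{(m)}(z)=a^{-m}C e^{Dz}h_0(Az+B)$. Thus $\langle k,f\rangle=\langle k,z^m\widetilde f\rangle=\langle k^{(m)},\widetilde f\rangle=a^{-m}C\langle e^{Dz}h_0(Az+B),\widetilde f\rangle$, while, using that multiplication by $(az+b)^m$ has Fock-space adjoint $G\mapsto\sum_{j=0}^m\binom mj\bar a^j\bar b^{m-j}G^{(j)}$, one gets $\langle h,g\rangle=a^{-m}\langle(az+b)^m h_0,g\rangle=a^{-m}\big\langle h_0,\sum_{j=0}^m\binom mj\bar a^j\bar b^{m-j}g^{(j)}\big\rangle=a^{-m}C\langle h_0,e^{\overline B z}\widetilde f(\overline A z+\overline D)\rangle$. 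The two final expressions coincide by the $m=0$ case of the theorem applied to the symbols $e^{Dz},\,Az+B$, namely $\langle e^{Dz}h_0(Az+B),\widetilde f\rangle=\langle h_0,e^{\overline B z}\widetilde f(\overline A z+\overline D)\rangle$. Hence $\langle k,f\rangle=\langle h,g\rangle$, so $(f,g)\in G(\bS_{\max}^*)$.

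I expect the two main difficulties to be the following. First, in the first inclusion, the bookkeeping of conjugations and binomial coefficients must be exact, so that \eqref{eq-for-adjoin} emerges on the nose rather than up to a misplaced conjugate or a wrong constant. Second, one must rigorously justify the Fock-space integrations by parts and the formula for the adjoint of multiplication by $(az+b)^m$: this requires knowing that the relevant derivatives and weighted compositions genuinely lie in $\calF^2$, which is exactly what Corollary \ref{f=z^mg}, Proposition \ref{Fock-sobolev-sp}, and the norm equivalence \eqref{equi-norm-Fock} are for. A helpful simplification is that only the $k=m$ test pairs of Proposition \ref{dom-Smax}(3) are needed — those with $k>m$ produce the same identity after differentiation — so no graph-norm density argument for $G(\bS_{\max})$ is required.
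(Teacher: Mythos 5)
Your first inclusion, $G(\bS_{\max}^*)\subseteq G(\widehat{\bS}_{\max})$, is correct and is essentially the paper's own argument: you test the adjoint identity on the pairs $(K_{z,a,b}^{[m]},\vartheta_{z,a,b,m})$ supplied by Proposition \ref{dom-Smax}(3), after noting that $(0,p)\in G(\bS_{\max})$ for $p\in\C_{m-1}[z]$ forces $u\perp\C_{m-1}[z]$; your use of $u=z^m\widetilde u$ instead of the division by $(\overline{A}z+\overline{D})^m$ in \eqref{vartheta-form} is a harmless variant.

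The reverse inclusion, however, contains a genuine gap at its final step, and it is exactly the step where all the real difficulty of the theorem sits. Your reductions $\langle k,f\rangle=a^{-m}C\,\langle e^{Dz}h_0(Az+B),\widetilde f\,\rangle$ and $\langle h,g\rangle=a^{-m}C\,\langle h_0,e^{\overline{B}z}\widetilde f(\overline{A}z+\overline{D})\rangle$ are defensible if the pairings are interpreted through absolutely convergent Taylor-coefficient series (the functions $e^{Dz}h_0(Az+B)=a^mC^{-1}k^{(m)}$ and $e^{\overline{B}z}\widetilde f(\overline{A}z+\overline{D})=\overline{C}^{-1}\sum_j\binom mj\overline{a}^j\overline{b}^{m-j}g^{(j)}$ are \emph{not} known to lie in $\calF^2$, since derivatives of Fock functions need not belong to $\calF^2$). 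But then the identity you invoke, $\langle e^{Dz}h_0(Az+B),\widetilde f\,\rangle=\langle h_0,e^{\overline{B}z}\widetilde f(\overline{A}z+\overline{D})\rangle$, is \emph{not} an instance of the $m=0$ case of the theorem: that case only yields $\langle W_{\psi,\varphi}h_0,\widetilde f\,\rangle=\langle h_0,W_{\psi,\varphi}^*\widetilde f\,\rangle$ when $h_0\in\text{dom}(W_{\psi,\varphi})$ and $\widetilde f\in\text{dom}(W_{\psi,\varphi}^*)$, i.e. when the two displayed weighted compositions are in $\calF^2$. Corollary \ref{f=z^mg} gives $h_0,\widetilde f\in\calF^2$, nothing more, and by the criterion from \cite{hai2016boundedness} recalled in the introduction, $\text{dom}(W_{\psi,\varphi})=\calF^2$ fails precisely when $|A|>1$ or $|A|=1$ with $A\overline{B}+D\ne0$. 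In those regimes your final equation is, after unwinding the reductions, literally equivalent to the target identity $\langle k,f\rangle=\langle h,g\rangle$, so the appeal to the $m=0$ case is circular in substance (and using the $m=0$ case ``for granted'' is in any event circular for $m=0$ itself unless cited independently). This is why the paper proves the inclusion by a three-case analysis: for $|A|<1$ via closedness of the domain (Proposition \ref{dom-closed}) together with Proposition \ref{dom-Smax}(4); for $|A|>1$ via surjectivity of $\bS_{\max}^*$ (Proposition \ref{S-open}), injectivity of $\widehat{\bS}_{\max}$ and Lemma \ref{extend->multivalued}; and for $|A|=1$, $A\overline{B}+D\ne0$ via the antiderivative $f_*$, Fock--Sobolev estimates, the Volterra operator and a weighted H\"older estimate establishing the bound of Remark \ref{rem-dom-S*}. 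Your argument can be patched only in the subcase $|A|<1$, where $W_{\psi,\varphi}$ is bounded and everywhere defined; for $|A|\geq1$ you must replace the last step by an estimate $|\langle g,u\rangle|\leq M_u\|f\|$ of the type the paper labors to obtain.
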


Before entering into the details of the proof, we isolate two technical observations. The first asserts the closedness of a specific domain of definition.

\begin{prop}\label{dom-closed}
Let $m\in\Z_{\geq 0}$ and $A,B,C,D,E,F$ be complex constants, with $C\ne 0$. Let $\vartheta$ be an entire function. For
$$\psi(z)=Ce^{Dz},\quad\varphi(z)=Az+B,\quad\phi(z)=(Ez+F)^m,\quad z\in\C,$$
we consider the equation
\begin{equation}\label{eq-closed-domain}
\psi(z)f(\varphi(z))=\phi(z)(\vartheta(z)g(z))^{(m)}
\end{equation}
and define the associated multi-valued operator
$$
\Omega:\text{dom}(\Omega)\in\calF^2\to 2^{\calF^2},
$$
where $(f,g)\in G(\Omega)$ if and only if it verifies equation \eqref{eq-closed-domain}. If condition
\begin{equation}\label{cond-bdd-multi}
\begin{cases}
\text{either $|A|<1$},\\
\text{or $|A|=1$, $A\overline{B}+D=0$}
\end{cases}
\end{equation}
holds, then:
\begin{enumerate}
\item for every $(f,g)\in G(\Omega)$, the function $\vartheta\cdot g\in\calF^2$. Moreover, there is a constant $\Delta_*$ such that
\begin{equation}\label{ineq-Delta*}
\|[\vartheta\cdot g]\|\leq\Delta_*\|f\|,\quad\forall (f,g)\in G(\Omega).
\end{equation}
\item If we assume additionally that $\vartheta\equiv 1$, then 
\begin{enumerate}
\item the domain $\text{dom}(\Omega)$ is closed;
\item $\text{dom}(\Omega^*)=\calF^2(m,0)$.
\end{enumerate}
\end{enumerate}
\end{prop}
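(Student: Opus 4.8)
The plan is to regard $h:=\vartheta\cdot g$ as the genuine unknown. Since $g$ is entire, so is $h$, and equation \eqref{eq-closed-domain} forces the identity $h^{(m)}(z)=\psi(z)f(\varphi(z))/\phi(z)=Ce^{Dz}f(Az+B)(Ez+F)^{-m}$ wherever $\phi(z)\neq 0$; as $h^{(m)}$ is entire the singularities at the zero of $\phi$ are removable and the identity holds on all of $\C$. Thus the whole statement reduces to an $L^2$-estimate for an $m$-fold antiderivative of $\psi f\circ\varphi/\phi$, and the hypothesis \eqref{cond-bdd-multi} will enter only once, through a single pointwise exponential bound. (Throughout I assume $A\neq 0$ and $\phi\not\equiv 0$; the contrary cases are degenerate and checked directly.)

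For item (1), pick $p\in\C_{m-1}[z]$ with $p^{(j)}(0)=h^{(j)}(0)$ for $0\le j<m$; then $(h-p)^{(j)}(0)=0$ for $j<m$ while $(h-p)^{(m)}=h^{(m)}$, so the equivalent norm \eqref{equi-norm-Fock} with $n=m$ yields
\[
\|[h]\|\ \le\ \|h-p\|\ \le\ \Delta_1^{-1}\Bigl(\int_\C|h^{(m)}(z)|^2(1+|z|)^{-2m}e^{-|z|^2}\,dV(z)\Bigr)^{1/2}.
\]
The heart of the proof is the existence of $R>0$ and $C_1>0$ with $Ez+F\neq 0$ for $|z|\ge R$ and
\[
|h^{(m)}(z)|^2(1+|z|)^{-2m}e^{-|z|^2}\ \le\ C_1\,|f(\varphi(z))|^2e^{-|\varphi(z)|^2},\qquad |z|\ge R.
\]
Cancelling the common factor $|C|^2|f(Az+B)|^2$ and taking logarithms, this is equivalent to $(|A|^2-1)|z|^2+2\,\re\bigl((D+A\overline{B})z\bigr)+|B|^2\le 2m\log|Ez+F|+2m\log(1+|z|)+\mathrm{const}$, and \eqref{cond-bdd-multi} is exactly what makes the left-hand side negligible for large $|z|$: if $|A|<1$ it tends to $-\infty$ quadratically, whereas if $|A|=1$ and $A\overline{B}+D=0$ it is the constant $|B|^2$; in either case the logarithmic right-hand side eventually dominates. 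Granting this bound, Lemma \ref{Guo-ineq} (with $\alpha=2m$) replaces the integral over $\C$ by the one over $\{|z|\ge R\}$, on which the bound applies, and the change of variables $x=\varphi(z)$ turns the result into $C_1\Delta_3|A|^{-2}\pi\|f\|^2$. This simultaneously shows $h-p\in\calF^2$, hence $\vartheta g=h\in\calF^2$, and gives \eqref{ineq-Delta*} with $\Delta_*:=\Delta_1^{-1}(C_1\Delta_3|A|^{-2}\pi)^{1/2}$.

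Now take $\vartheta\equiv 1$. Here $\Omega(0)=\{g\in\calF^2:g^{(m)}\equiv 0\}=\C_{m-1}[z]$, so every $(f,g)\in G(\Omega)$ admits a distinguished selection $g_0\in\Omega(f)$, namely $g_0:=g-p$ where $p\in\C_{m-1}[z]$ matches the $(m-1)$-jet of $g$ at $0$; item (1) applied verbatim (the low-order Taylor terms drop out) gives $\|g_0\|\le\Delta_0\|f\|$ with $\Delta_0:=\Delta_1^{-1}(C_1\Delta_3|A|^{-2}\pi)^{1/2}$. For (2a), let $f_n\to f$ in $\calF^2$ with $f_n\in\text{dom}(\Omega)$ and let $g_n$ be the distinguished selection of $f_n$; then $\|g_n\|\le\Delta_0\|f_n\|$ is bounded, so a subsequence converges weakly to some $g\in\calF^2$. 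Weak convergence in the reproducing kernel space $\calF^2$ forces $g_{n_k}^{(m)}(z)=\langle g_{n_k},K_z^{[m]}\rangle\to\langle g,K_z^{[m]}\rangle=g^{(m)}(z)$ and $f_{n_k}(z)\to f(z)$ for every $z$, so passing to the limit in $Ce^{Dz}f_{n_k}(Az+B)=(Ez+F)^mg_{n_k}^{(m)}(z)$ gives $(f,g)\in G(\Omega)$, i.e. $f\in\text{dom}(\Omega)$; thus $\text{dom}(\Omega)$ is closed. (Equivalently, item (1) identifies $\text{dom}(\Omega)$ with $\calF^2(m,\,B-AF/E)$, the space of $f\in\calF^2$ for which $(Ez+F)^m$ divides $e^{Dz}f(\varphi(z))$, an intersection of kernels of bounded point-evaluations, hence closed; this is the form predicted by Proposition \ref{dom-Smax}.)

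Finally (2b). For $u\in\calF^2$ one has $\langle u,z^j\rangle=u^{(j)}(0)$, so $\calF^2(m,0)=\C_{m-1}[z]^{\perp}$. If $u\in\calF^2(m,0)$ then for any $(f,g)\in G(\Omega)$, $\langle g,u\rangle=\langle g_0,u\rangle$ because $g-g_0\in\C_{m-1}[z]\perp u$, whence $|\langle g,u\rangle|\le\|g_0\|\,\|u\|\le\Delta_0\|u\|\,\|f\|$, and Remark \ref{rem-dom-S*} gives $u\in\text{dom}(\Omega^*)$. Conversely, if $u\in\text{dom}(\Omega^*)$, applying the defining inequality of Remark \ref{rem-dom-S*} to the pairs $(0,p)$ with $p\in\Omega(0)=\C_{m-1}[z]$ forces $\langle p,u\rangle=0$ for all such $p$, i.e. $u\in\C_{m-1}[z]^{\perp}=\calF^2(m,0)$. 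Hence $\text{dom}(\Omega^*)=\calF^2(m,0)$. The one real obstacle is the pointwise exponential estimate in (1); once it is established — it is precisely the step that consumes hypothesis \eqref{cond-bdd-multi} — everything else follows from boundedness of the distinguished selection $g_0$ together with $\Omega(0)=\C_{m-1}[z]$.
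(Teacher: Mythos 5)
Your proof is correct, and for item (1) it is essentially the paper's argument: subtract a polynomial matching the $(m-1)$-jet at the origin, control the quotient norm through the equivalent norm \eqref{equi-norm-Fock}, reduce to $\{|z|\ge R\}$ via Lemma \ref{Guo-ineq}, and absorb the weights by the exponent inequality $(|A|^2-1)|z|^2+2\re\big((D+A\overline{B})z\big)+|B|^2\le\mathrm{const}$, which is exactly where \eqref{cond-bdd-multi} enters (the paper changes variables $x=\varphi(z)$ before estimating the exponent, you make the pointwise comparison first; the content is identical, and both arguments tacitly assume $A\ne 0$ and $\phi\not\equiv 0$, as you flag). Where you genuinely diverge is in (2a)--(2b). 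For (2a) the paper lets the selections converge in norm (a Cauchy argument implicit in item (1)) and cites a lemma for norm-to-pointwise convergence of derivatives; you take the distinguished selections with vanishing jet, use the uniform bound $\norm{g_0}\le\Delta_0\norm{f}$ to extract a weakly convergent subsequence, and pass to the limit by pairing with the kernels $K_z^{[m]}$ --- slightly more economical, since only boundedness of the selections is needed. For (2b) the paper cites Theorem 3.6 of Cross together with $\Omega(0)=\C_{m-1}[z]$, whereas you prove both inclusions directly from Remark \ref{rem-dom-S*}, using $\calF^2(m,0)=\C_{m-1}[z]^\perp$, the estimate $\abs{\inner{g}{u}}=\abs{\inner{g_0}{u}}\le\Delta_0\norm{u}\norm{f}$ in one direction, and testing against the pairs $(0,p)$, $p\in\C_{m-1}[z]$, in the other; this makes the statement self-contained at no extra cost. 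One small caution: your parenthetical identification of $\text{dom}(\Omega)$ with $\calF^2(m,\,B-AF/E)$ needs solvability, i.e.\ that the $m$-fold antiderivative of $\psi\cdot f\circ\varphi/\phi$ lies in $\calF^2$, which item (1) as stated does not provide (it presupposes the existence of $g$); since your weak-compactness argument does not rely on this aside, nothing is affected, but the remark should either be dropped or justified by the membership direction of \eqref{equi-norm-Fock}, as the paper does in Proposition \ref{dom-Smax}(3).
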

\begin{proof}
Let $(f,g)\in G(\bS)$ and set $h=\vartheta\cdot g$. For $|z|\geq R$ large enough, $|\phi(z)|\geq 1$ and so
\begin{eqnarray*}
&&\int\limits_{|z|\geq R}|h^{(m)}(z)|^2(1+|z|)^{-2m}e^{-|z|^2}\,dV(z)\\
&&=\int\limits_{|z|\geq R}\abs{\psi(z)\dfrac{f(\varphi(z))}{\phi(z)}}^2(1+|z|)^{-2m}e^{-|z|^2}\,dV(z)\\
&&\leq\int\limits_{|z|\geq R}\abs{\psi(z)f(\varphi(z))}^2e^{-|z|^2}\,dV(z).
\end{eqnarray*}
Denote $Q=\{x\in\C:|x-B|\geq R|A|\}$ and do the change of variables $x=\varphi(z)=Az+B$ in order to obtain
\begin{eqnarray*}
&&\int\limits_{|z|\geq R}|h^{(m)}(z)|^2(1+|z|)^{-2m}e^{-|z|^2}\,dV(z)\\
&&\leq |A|^{-2}\int\limits_Q |C|^2\abs{e^{D(x-B)/A}f(x)}^2e^{-|x-B|^2/|A|^2}\,dV(x)\\
&&\leq |A^{-1}C|^2e^{\frac{|DB|}{|A|}-\frac{|B|^2}{|A|^2}}\int\limits_Q |f(x)|^2e^{\frac{2}{|A|^2}\re[(\overline{B}+D\overline{A})x]-\frac{|x|^2}{|A|^2}}\,dV(x).
\end{eqnarray*}
Note that
\begin{eqnarray*}
&&\frac{2}{|A|^2}\re[(\overline{B}+D\overline{A})x]-\frac{|x|^2}{|A|^2}\\
&&=-|x|^2+\dfrac{1}{|A|^2}[-(1-|A|^2)|x|^2+2\re[(\overline{B}+D\overline{A})x]]\\
&&\leq
\begin{cases}
-|x|^2+\dfrac{(|AD|+|B|)^2}{1-|A|^2}\quad\text{if $|A|<1$},\\
-|x|^2\quad\text{if $|A|=1$ and $A\overline{B}+D=0$}.
\end{cases}
\end{eqnarray*}
Thus, there exists a constant $K>0$ such that
\begin{eqnarray*}
\int\limits_{|z|\geq R}|h^{(m)}(z)|^2(1+|z|)^{-2m}e^{-|z|^2}\,dV(z)\leq K\int\limits_Q |f(x)|^2e^{-|x|^2}\,dV(x).
\end{eqnarray*}
Since $Q\subset Q_*=\{z\in\C^n:|z|\geq R|A|-|B|\}$, we have
\begin{eqnarray*}
\int\limits_{|z|\geq R}|h^{(m)}(z)|^2(1+|z|)^{-2m}e^{-|z|^2}\,dV(z)
&\leq& K\int\limits_Q |f(x)|^2e^{-|x|^2}\,dV(x)\\
&\leq& K\int\limits_{Q_*} |f(x)|^2e^{-|x|^2}\,dV(x),
\end{eqnarray*}
which implies, by Lemma \ref{Guo-ineq}, that
\begin{eqnarray*}
\int\limits_{\C}|h^{(m)}(z)|^2(1+|z|)^{-2m}e^{-|z|^2}\,dV(z)
&\leq& \Delta_3 K\int\limits_{\C} |f(x)|^2e^{-|x|^2}\,dV(x).
\end{eqnarray*}
Let $q\in\C_{m-1}[z]$ with the property that
$$
h^{(j)}(0)=q^{(j)}(0),\quad\forall j\in\{0,1,\cdots,m-1\}.
$$
According to \eqref{equi-norm-Fock}, 
\begin{eqnarray*}
\|h-q\|
&\leq&\Delta_1^{-1}\left(\int\limits_{\C}|h^{(m)}(z)|^2(1+|z|)^{-2m}e^{-|z|^2}\,dV(z)\right)^{1/2}\\
&\leq&\Delta_1^{-1}\sqrt{\Delta_3 K}\left(\int\limits_{\C} |f(x)|^2e^{-|x|^2}\,dV(x)\right)^{1/2}.
\end{eqnarray*}

(2a) Suppose that $\vartheta\equiv 1$. The closedness of $\text{dom}(\Omega)$ is proved as follows. Let $\{f_n\}\in\text{dom}(\Omega)$ with $f_n\to f\in\calF^2$ and $g_n\in\Omega(f_n)$. It follows from the first item that $\{g_n\}$ is convergent, too. We can suppose $g_n\to g\in\calF^2$ and so, by \cite[Lemma 2.5]{hai2018complex} we infer 
$$
g^{(j)}(z)\to g^{(j)}(z),\quad\forall z\in\C,j\in\Z_{\geq 0}.
$$
Since $(f_n,g_n)\in G(\Omega)$, we have
$$
\psi(z)f_n(\varphi(z))=\phi(z)g_n^{(m)}(z),\quad\forall z\in\C,
$$
which implies, by letting $n\to\infty$, that
$$
\psi(z)f(\varphi(z))=\phi(z)g^{(m)}(z),\quad\forall z\in\C.
$$
Thus, $(f,g)\in G(\Omega)$.

(2b) We can make use of \cite[Theorem 3.6]{cross2002multivalued} in order to get $\text{dom}(\widehat{\bS}_{\max}^*)=\bS_{\max}(0)^\perp=\calF^2(m,0)$, where the last equality uses Proposition \ref{prop-must-multi}. 
\end{proof}

The second observation isolates a sufficient condition for a multi-valued weighted composition operator to have a closed range.

\begin{prop}\label{S-open}
Let $\psi(z)=Ce^{Dz}$, $\varphi(z)=Az+B$, $\phi(z)=(Ez+F)^m$, where $A,B,C$, and $D$ are complex constants, with $C\ne 0$. If $|A|>1$, then 
\begin{enumerate}
\item there exists a constant $\Delta_4>0$ such that
$$
\norm{g}\geq\Delta_4\norm{f},\quad\forall (f,g)\in G(\bS_{\max}).
$$
\item the range $\ra (\bS_{\max})$ is closed.
\item the range $\ra (\bS_{\max}^*)=\calF^2$.
\end{enumerate}
\end{prop}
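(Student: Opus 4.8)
The plan is to derive all three assertions from a single lower bound on $\|g\|$ in terms of $\|f\|$, exploiting that $|A|>1$ contracts the Gaussian measure under the substitution $x=\varphi(z)=Az+B$. First I would take $(f,g)\in G(\bS_{\max})$, so that $\psi(z)f(\varphi(z))=\phi(z)g^{(m)}(z)$ for all $z$. The strategy is to estimate $\|f\|$ from below by integrating $|f(\varphi(z))|^2$ against a suitable Gaussian, using the change of variables $x=Az+B$ which produces a Jacobian factor $|A|^{-2}<1$ and, crucially, replaces $e^{-|x|^2}$ by $e^{-|z|^2/|A|^2}$ up to lower-order exponential terms coming from the cross terms in $|Az+B|^2$. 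Since $|A|>1$, the exponent $-|z|^2/|A|^2$ dominates $-|z|^2$ for large $|z|$, so on the region $|z|\geq R$ one gets $e^{-|z|^2/|A|^2}\leq (\text{const})\,e^{-|z|^2}$ after absorbing the linear terms; this direction of the inequality is exactly the one that $|A|>1$ makes work. Then, on $|z|\geq R$ with $R$ large, $|\phi(z)|=|Ez+F|^m$ is bounded below (assuming $E\neq 0$; if $E=0$ then $\phi$ is a nonzero constant and the estimate is even easier), so $|f(\varphi(z))|^2 = |\phi(z)/\psi(z)|^2|g^{(m)}(z)|^2$ is controlled by $|g^{(m)}(z)|^2$ times a factor that, together with the $(1+|z|)^{-2m}$ weight, stays bounded. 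Applying Lemma \ref{Guo-ineq} to pass from $|z|\geq R$ back to all of $\C$, and then the equivalent-norm Lemma \eqref{equi-norm-Fock} to bound $\int_\C |g^{(m)}(z)|^2(1+|z|)^{-2m}e^{-|z|^2}\,dV(z)$ by a multiple of $\|g\|^2$, yields a constant $\Delta_4>0$ with $\|f\|\leq \Delta_4^{-1}\|g\|$, which is item (1).

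For item (2), item (1) says precisely that $\bS_{\max}^{-1}$ is bounded (in the single-valued sense on the relevant quotient), hence $\ra(\bS_{\max})$ is closed: if $g_n\in\ra(\bS_{\max})$ with $g_n\to g$ in $\calF^2$, pick $f_n$ with $(f_n,g_n)\in G(\bS_{\max})$ and $\|f_n\|\leq\Delta_4^{-1}\|g_n\|$ (choosing the representative of minimal norm), so $\{f_n\}$ is bounded; passing to a weakly convergent subsequence $f_n\rightharpoonup f$ and using pointwise evaluation continuity together with the closedness of $G(\bS_{\max})$ from Proposition \ref{W-closed}, we get $(f,g)\in G(\bS_{\max})$, so $g\in\ra(\bS_{\max})$. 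For item (3), I would combine closedness of $\ra(\bS_{\max})$ with the general duality identity $\ra(\bS_{\max}^*)^\perp = \ke(\bS_{\max}^{**})$ or, more directly, use that for a closed multi-valued operator $\text{\bf Clo}(\ra(\bS_{\max}^*)) = (\{v : (0,v)\in G(\bS_{\max})\})^\perp$ paired with the fact that $\ra(\bS_{\max}^*)$ is already closed because $\bS_{\max}$ has closed range and a bounded inverse. Since $\bS_{\max}$ is injective by Proposition \ref{prop-zero-phi-f}(2), the orthogonal complement defining $\text{\bf Clo}(\ra(\bS_{\max}^*))$ is trivial, whence $\ra(\bS_{\max}^*)=\calF^2$; one can also invoke Proposition \ref{prop-zero-phi-f}(3) for density and then upgrade density to equality using the closed-range property of $\bS_{\max}^*$ (closed range of an operator is equivalent to closed range of its adjoint).

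The main obstacle is the exponential bookkeeping in step one: one must verify carefully that the cross terms $2\,\re[(\overline{B}+D\overline{A})x]$ arising from $|Az+B|^2$ and from $\psi(z)=Ce^{Dz}$ can be absorbed into $-(|A|^2-1)|z|^2/|A|^2$ when $|A|>1$, producing a uniform constant $K$ with $e^{-|z|^2/|A|^2+2\re[\cdots]/|A|^2}\leq K e^{-|z|^2}$ on $|z|\geq R$. This is the mirror image of the computation in the proof of Proposition \ref{dom-closed}, but with the inequality running the other way, and it is where the hypothesis $|A|>1$ is essential rather than $|A|\leq 1$; everything else is an application of the already-established Lemmas \ref{Guo-ineq} and \eqref{equi-norm-Fock} and the closed-graph property from Proposition \ref{W-closed}.
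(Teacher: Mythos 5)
Your plan follows the paper's own route in all three items: (1) via the norm equivalence \eqref{equi-norm-Fock} applied to $g^{(m)}$, the defining equation \eqref{subspace-wco}, the change of variables $x=\varphi(z)=Az+B$, Lemma \ref{Guo-ineq}, and the quadratic Gaussian gain supplied by $|A|>1$; (2) from (1) plus the closed graph of Proposition \ref{W-closed} (your weak-compactness detour is unnecessary: by linearity $(f_n-f_k,g_n-g_k)\in G(\bS_{\max})$, so (1) makes $\{f_n\}$ Cauchy, and by injectivity each $f_n$ is unique, so there is no ``minimal norm representative'' to choose); (3) from the closed-range theorem for linear relations together with the density of $\ra(\bS_{\max}^*)$ from Proposition \ref{prop-zero-phi-f}(3), which is exactly the paper's argument.

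The one estimate you actually display --- the crux where $|A|>1$ enters --- is however stated wrongly. Under $x=Az+B$ the Gaussian $e^{-|x|^2}$ becomes $e^{-|Az+B|^2}\sim e^{-|A|^2|z|^2}$, not $e^{-|z|^2/|A|^2}$, and the inequality you assert, $e^{-|z|^2/|A|^2+2\re[\cdots]/|A|^2}\leq K e^{-|z|^2}$ on $|z|\geq R$, is false for $|A|>1$ (the ratio grows like $e^{(1-|A|^{-2})|z|^2}$); as written, with the cross terms $2\re[(\overline{B}+D\overline{A})x]$ copied from Proposition \ref{dom-closed}, it is the $|A|\leq 1$ inequality, which proves nothing here. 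What you need, in your variables, is
$$
\abs{\frac{\phi(z)}{\psi(z)}}^2(1+|z|)^{2m}\,e^{-|Az+B|^2}\;\leq\;K\,e^{-|z|^2},\qquad |z|\geq R,
$$
equivalently, after $z=(x-B)/A$, the paper's inequality \eqref{ineq-sta-rain}, where the positive quadratic exponent $(1-|A|^{-2})|x|^2$ absorbs the polynomial factor $|Ez+F|^{2m}(1+|z|)^{2m}$ and the linear exponential terms coming from $1/\psi$ and from the cross term of $|Az+B|^2$. For the same reason, your intermediate claim that $|\phi(z)/\psi(z)|^2(1+|z|)^{-2m}$ ``stays bounded'' is not true on its own (the factor $e^{-2\re(Dz)}$ is unbounded on a half-plane); boundedness holds only after multiplying by $e^{-(|A|^2-1)|z|^2}$. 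Finally, in (3) the identity $\text{\bf Clo}(\ra(\bS_{\max}^*))=\{v:(0,v)\in G(\bS_{\max})\}^\perp$ confuses the multivalued part $\bS_{\max}(0)=\C_{m-1}[z]$ (whose orthocomplement is $\calF^2(m,0)$, not $\calF^2$) with the kernel; the correct statement is $\text{\bf Clo}(\ra(\bS_{\max}^*))=\ke(\bS_{\max})^\perp$. Your alternative route there --- density from Proposition \ref{prop-zero-phi-f}(3) upgraded by the closed-range theorem --- is correct and is the paper's.
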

\begin{proof}
(1) Since $|A|>1$, we can choose $R$ large enough with the property 
\begin{equation}\label{ineq-sta-rain}
\abs{\psi(\dfrac{x-B}{A})}e^{|x|^2(1-|A|^{-2})+2|A|^{-2}\re(\overline{B}x)}\geq\left(1+\abs{\dfrac{x-B}{A}}\right)^{2m}\abs{\dfrac{E}{A}(x-B)+F}^{2m}
\end{equation}
for all $|x|\geq R$. Let $(f,g)\in G(\bS_{\max})$ so that equation \eqref{subspace-wco} is satisfied. By \eqref{equi-norm-Fock},
\begin{eqnarray*}
\norm{g}^2
&\geq&\Delta_2^{-2}\int\limits_{\C}|g^{(m)}(z)|^2(1+|z|)^{-2m}e^{-|z|^2}\,dV(z)\\
&=&\Delta_2^{-2}\int\limits_{\C}\abs{\psi(z)f(\varphi(z))}^2|Ez+F|^{-2m}(1+|z|)^{-2m}e^{-|z|^2}\,dV(z).
\end{eqnarray*}
In the last inequality we perform the change of variables $z=(x-B)A^{-1}$ in order to get
\begin{eqnarray*}
\norm{g}^2
&\geq&\Delta_2^{-2}|A|^{-2}\int\limits_{\C}\abs{\psi\left(\dfrac{x-B}{A}\right)f(x)}^2\abs{E\dfrac{x-B}{A}+F}^{-2m}\\
&&\quad\times\left(1+\abs{\dfrac{x-B}{A}}\right)^{-2m}e^{-|\frac{x-B}{A}|^2}\,dV(x),
\end{eqnarray*}
which implies, by \eqref{ineq-sta-rain}, that
\begin{eqnarray*}
\norm{g}^2
\geq\Delta_2^{-2}|A|^{-2}e^{-\frac{|B|^2}{|A|^2}}\int\limits_{\C}\abs{f(x)}^2e^{-|x|^2}\,dV(x).
\end{eqnarray*}

(2) Let $\{g_n\}\subset\ra (\bS_{\max})$ be a convergent sequence and then there exists $\{f_n\}\subset\text{dom}(\bS_{\max})$ such that $g_n\in\bS_{\max}(f_n)$ for each $n$. It follows from the first item that the sequence $\{f_n\}$ is also convergent. Denote
$$
f=\lim\limits_{n\to\infty}f_n,\quad g=\lim\limits_{n\to\infty}g_n.
$$
Since $(f_n,g_n)\in G(\bS_{\max})$, so is $(f,g)$. Thus, $\ra (\bS_{\max})$ is closed.

(3) By Proposition \ref{W-closed} and \cite[Theorem 3.3]{cross2002multivalued}, the range $\ra (\bS_{\max})$ is closed if and only if so is $\ra (\bS_{\max}^*)$. Hence by Proposition \ref{prop-zero-phi-f}, we get the desired conclusion.
\end{proof}

At this point we are ready to prove Theorem \ref{formula-adjoint}. 
\begin{proof}[Proof of Theorem \ref{formula-adjoint}]
First, we show that
\begin{equation}\label{S*<S^}
G(\bS_{\max}^*)\subseteq G(\widehat{\bS}_{\max}).
\end{equation}
Indeed, let $(u,v)\in G(\bS_{\max}^*)$. Then for any $(f,g)\in G(\bS_{\max})$ we have
$$
\inner{g}{u}=\inner{f}{v}.
$$
Let $z\in\C$ with $Az+\overline{B}\ne 0$ and $\vartheta_{z,a,b,m}$ be of form \eqref{vartheta-form}. In particular with $(f,g)=(K_{z,a,b}^{[m]},\vartheta_{z,a,b,m})$ we can write
$$
\inner{\vartheta_{z,a,b,m}}{u}=\inner{K_{z,a,b}^{[m]}}{v}
$$
and so
\begin{eqnarray*}
\sum_{j=0}^m\binom{m}{j}\overline{a}^j\overline{b}^{m-j}v^{(j)}(z)
&=&\inner{u}{\vartheta_{z,a,b,m}}=\overline{C}e^{\overline{B}z}(\overline{A}z+\overline{D})^{-m}\inner{u}{K_{\overline{A}z+\overline{D}}}\\
&=&\overline{C}e^{\overline{B}z}(\overline{A}z+\overline{D})^{-m}u(\overline{A}z+\overline{D}).
\end{eqnarray*}
Thus, we get
$$
\widehat{\phi}(z)\sum_{j=0}^m\binom{m}{j}\overline{a}^j\overline{b}^{m-j}v^{(j)}(z)=\overline{C}e^{\overline{B}z}u(\overline{A}z+\overline{D}),\quad\forall z\in\C.
$$
Next, we prove the equality of equation \eqref{S*<S^} occurs. To that aim, we consider the following possibilities of the constant $A$.
\bigskip

{\bf Case 1:} $|A|<1$. 

Note equation \eqref{eq-for-adjoin} can be rewritten as
\begin{eqnarray*}
\widehat{\psi}(z)f(\widehat{\varphi}(z))
&=&\widehat{\phi}(z)\overline{a}^m\sum_{j=0}^m\binom{m}{j}\overline{a}^{j-m}\overline{b}^{m-j}g^{(j)}(z)\\
&=&\widehat{\phi}(z)\overline{a}^m\sum_{j=0}^m\binom{m}{j}(-b)^{m-j}g^{(j)}(z)\quad\text{(by \eqref{abc-cond})}\\
&=&\widehat{\phi}(z)e^{bz}\overline{a}^m (e^{-bz}g(z))^{(m)},\quad\text{(by the product rule for derivatives)}.
\end{eqnarray*}
For denoting $\widetilde{\psi}(z)=\psi(z)e^{-bz}\overline{a}^{-m}$, the last equality becomes
$$
\widetilde{\psi}(z)f(\widehat{\varphi}(z))=\widehat{\phi}(z)(e^{-bz}g(z))^{(m)}.
$$
This leads us to consider the following multi-valued operator
$$
\widetilde{\bS}_{\max}:\text{dom}(\widetilde{\bS}_{\max})\subset\calF^2\to 2^{\calF^2}
$$
where $(f,g)\in G(\widetilde{\bS}_{\max})$ if and only if it verifies
$$
\widetilde{\psi}(z)f(\widehat{\varphi}(z))=\widehat{\phi}(z)g^{(m)}(z),\quad z\in\C.
$$
It is clear that $\text{dom}(\widehat{\bS}_{\max})\subset\text{dom}(\widetilde{\bS}_{\max})$. On the other hand, by Proposition \ref{dom-closed}(2a), the domain $\text{dom}(\widetilde{\bS}_{\max})$ is closed, and so by Proposition \ref{dom-Smax}(4) we must have $\text{dom}(\widetilde{\bS}_{\max})=\calF^2(m,0)$. Note that Proposition \ref{dom-closed}(2b) reveals that
$$\calF^2(m,0)=\text{dom}(\widehat{\bS}_{\max}^*)\subset\text{dom}(\widehat{\bS}_{\max})\subset\text{dom}(\widetilde{\bS}_{\max})=\calF^2(m,0),$$
where the first inclusion uses \eqref{S*<S^}. Thus, this case gives $\bS_{\max}^*=\widehat{\bS}_{\max}$.
\bigskip

{\bf Case 2:} $|A|>1$.

In this case, by Proposition \ref{S-open}, $\bS_{\max}^*$ is surjective, while a similar argument as in Proposition \ref{prop-zero-phi-f} proves that $\widehat{\bS}_{\max}$ is injective. These observations, together with inclusion \eqref{S*<S^}, allow us to make use of Lemma \ref{extend->multivalued} in order to derive $\bS_{\max}^*=\widehat{\bS}_{\max}$.
\bigskip

{\bf Case 3:} $|A|=1$ and $A\overline{B}+D\ne 0$.

By inclusion \eqref{S*<S^}, it suffices to show that $\text{dom}(\widehat{\bS}_{\max})\subseteq\text{dom}(\bS_{\max}^*)$. Indeed, let $u\in\text{dom}(\widehat{\bS}_{\max})$. It was mentioned in Remark \ref{rem-dom-S*} that $u\in\text{dom}(\bS_{\max}^*)$ if and only if there exists a constant $M=M(u)>0$ such that
$$
\abs{\inner{g}{u}}\leq M\norm{f},\quad\forall (f,g)\in G(\bS_{\max}).
$$
To that aim, we take arbitrarily $f\in\text{dom}(\bS_{\max})$ and so by Proposition \ref{dom-Smax}(1) we can define the function
\begin{eqnarray*}
f_*(x)
&=&\int\limits_0^{x+\overline{A}B+\overline{D}} dx_{m-1}\int\limits_0^{x_{m-1}}dx_{m-2}\cdots\\
&&\cdots\int\limits_0^{x_1}\psi(x_0-\overline{A}B-\overline{D})\dfrac{f(\varphi(x_0-\overline{A}B-\overline{D}))}{\phi(x_0-\overline{A}B-\overline{D})}\,dx_0,\quad x\in\C.
\end{eqnarray*}
By the definition of $\text{dom}(\bS_{\max})$, there exists $g\in\calF^2$ such that the pair $(f,g)$ verifies equation \eqref{subspace-wco}, and hence we have
$$
g^{(m)}(z-\overline{A}B-\overline{D})=\psi(z-\overline{A}B-\overline{D})\dfrac{f(\varphi(z-\overline{A}B-\overline{D}))}{\phi(z-\overline{A}B-\overline{D})},\quad\forall z\in\C.
$$
Since
$$
g(z-\overline{A}B-\overline{D})=\int\limits_0^{z} dx_{m-1}\int\limits_0^{x_{m-1}}dx_{m-2}\cdots\int\limits_0^{x_1}g^{(m)}(x_0-\overline{A}B-\overline{D})\,dx_0+\lambda(z),
$$
for some $\lambda\in\C_{m-1}[z]$, we find $f_*\in\calF^2$ and moreover
$$
g(x)=f_*(x)+\lambda(x+\overline{A}B+\overline{D}),\quad\forall x\in\C.
$$
Since $(f,g)\in G(\bS_{\max})$, the pair $(f,g)$ verifies equation \eqref{subspace-wco}, i.e.
\begin{eqnarray*}
Ce^{DBA^{-1}}f(z)
&=&(az+b)^m e^{-DA^{-1}z}g^{(m)}(A^{-1}(z-B))\\
&=&(az+b)^m e^{-DA^{-1}z}f_*^{(m)}(A^{-1}(z-B)),
\end{eqnarray*}
and hence by Corollary \ref{f=z^mg}, the function $h(z)=e^{-DA^{-1}z}f_*^{(m)}(A^{-1}(z-B))$ satisfies
$$
h^{(j)}\in\calF^2,\quad\|h^{(j)}\|\leq\Delta_4\norm{f},\quad\forall j\in\{0,\cdots,m\}.
$$
By the product rule for derivatives, for each $k\in\{0,\cdots,m\}$ there are complex constants $\alpha_\ell$ for $\ell\in\{0,\cdots,k\}$ with
$$
h^{(k)}(z)=\sum_{\ell=0}^k\alpha_\ell e^{-DA^{-1}z}f_*^{(m+\ell)}(A^{-1}(z-B))
$$
Using inductive arguments on $k$, one finds
$$p_k(z)=e^{-DA^{-1}z}f_*^{(m+k)}(A^{-1}(z-B))\in\calF^2,\quad k\in\{0,\cdots,m\}$$
and moreover, there exists a constant $E=E(m)>0$ such that
$$
\|p_k\|\leq E\|f\|,\quad\forall k\in\{0,\cdots,m\}.
$$
Define the functions
$$
q_k(x)=f_*^{(k)}(x-\overline{A}B-\overline{D}),\quad k\in\{0,\cdots,2m\},x\in\C.
$$
Note that
\begin{eqnarray*}
&&\pi\|p_k\|^2\\
&&=\int\limits_{\C}|p_k(z)|^2e^{-|z|^2}\,dV(z)=\int\limits_{\C}|f_*^{(m+k)}(\overline{A}(z-B))|^2e^{-|z|^2-2\re(D\overline{A}z)}\,dV(z)\\
&&=\int\limits_{\C}|f_*^{(m+k)}(x-\overline{A}B-\overline{D})|^2e^{-|x|^2}\,dV(x)\quad\text{(change variables $x=\overline{A}z+\overline{D}$)},
\end{eqnarray*}
which means 
\begin{equation}\label{m<k<2m}
q_k\in\calF^2,\quad\|q_k\|=\|p_{k-m}\|\leq E\|f\|,\quad k\in\{m,m+1,\cdots,2m\}.
\end{equation}
Next, we show that assertion \eqref{m<k<2m} also holds for $k\in\{0,1,\cdots,m-1\}$. Recall that the operator
$$
(T\vartheta)(z)=\int\limits_0^z\vartheta(x)\,dx,\quad\vartheta\in\calF^2
$$
with domain $\calF^2$ is always bounded (see \cite[Theorem 1(i)]{constantin2012volterra}). Since
$$
q_{m-1}(z)=q_{m-1}(0)+(Tq_m)(z)=(Tq_m)(z),\quad\forall z\in\C,
$$
we have $q_{m-1}\in\calF^2$ and moreover
$$\|q_{m-1}\|\leq\|T\|\cdot\|q_m\|\leq\|T\|E\|f\|.$$
A backward induction implies
\begin{equation}\label{0<k<m}
q_k\in\calF^2,\quad\|q_k\|\leq\|T\|^{m-k}E\|f\|,\quad k\in\{0,1,\cdots,m-1\}. 
\end{equation}
It follows from assertions \eqref{m<k<2m} and \eqref{0<k<m} that the function $q_0$ belongs to the Fock-Sobolev space of order $2m$. Hence, by Proposition \ref{Fock-sobolev-sp}, the function $b_{2m}\cdot q_0\in\calF^2$ and its norm is is comparable to $\sum_{k=0}^{2m}\|q_k\|$; namely, there exists a constant $\Delta_5=\Delta_5(m)>0$ such that
\begin{eqnarray}
\|b_{2m}\cdot q_0\|
\nonumber&\leq& \Delta_5^{-1/2}(\sum_{k=0}^{2m}\|q_k\|)=\Delta_5^{-1/2}\left[(\sum_{k=0}^{m-1}+\sum_{k=m}^{2m})\|q_k\|\right]\\
\label{b2mq0<}&\leq& \Delta_5^{-1/2}\left[\sum_{k=0}^{m-1}\|T\|^{m-k}E+(m+1)E\right]\|f\|.
\end{eqnarray}
Note that $\lambda(x+\overline{A}B+\overline{D})$ is a polynomial of variable $x$ with degree at most $m-1$. This allows us to use Proposition \ref{dom-Smax}(4) in order to get
$$
\inner{g}{u}=\inner{f_*}{u}=\dfrac{1}{\pi}\int\limits_{\C}f_*(z)\overline{u(z)}e^{-|z|^2}\,dV(z).
$$
Doing the change of variables $z=\overline{A}y+\overline{D}$ and noting the fact that
$$
|z|^2=|y|^2+2\re(\overline{A}Dy)+|D|^2,
$$
the inner product can be estimated as
\begin{eqnarray*}
|\inner{g}{u}|
&\leq&\dfrac{1}{\pi}\int\limits_{\C}|f_*(\overline{A}y+\overline{D})|\cdot|u(\overline{A}y+\overline{D})|e^{-|y|^2-2\re(\overline{A}Dy)-|D|^2}\,dV(y)\\
&=&\int\limits_{\C}|f_*(\overline{A}y+\overline{D})|(1+|y|)^m|\overline{A}y+\overline{D}|^m e^{-|y|^2/2-2\re(\overline{A}Dy)-\re(\overline{B}y)}\\
&&\times |u(\widehat{\varphi}(y))|(1+|y|)^{-m}|\widehat{\phi}(y)|^{-1}e^{-|y|^2/2+\re(\overline{B}y)}\,dV(y),
\end{eqnarray*}
which implies, by  H\"older's inequality
\begin{eqnarray}\label{boss}
&\nonumber&|\inner{g}{u}|^2\\
\nonumber&&\leq\int\limits_{\C}|f_*(\overline{A}y+\overline{D})|^2(1+|y|)^{2m}|\overline{A}y+\overline{D}|^{2m} e^{-|y|^2-4\re(\overline{A}Dy)-2\re(\overline{B}y)}\,dV(y)\\
&&\times \int\limits_{\C}|u(\widehat{\varphi}(y))|^2(1+|y|)^{-2m}|\widehat{\phi}(y)|^{-2}e^{-|y|^2+2\re(\overline{B}y)}\,dV(y).
\end{eqnarray}
Since $u\in\text{dom}(\widehat{\bS}_{\max})$, there is $v\in\calF^2$ with the property that $(u,v)\in G(\widehat{\bS}_{\max})$, that is $\widehat{\psi}\cdot u\circ\widehat{\varphi}=\widehat{\phi}\cdot v^{(m)}$. Substitute this identity into the second integral above:
\begin{eqnarray}
\nonumber&&\int\limits_{\C}\abs{\widehat{\psi}(y)u(\widehat{\varphi}(y))}^2|\widehat{\phi}(y)|^{-2}(1+|y|)^{-2m}e^{-|y|^2}\,dV(y)\\
\nonumber&&=\int\limits_{\C}\abs{\sum_{j=0}^m\binom{m}{j}\overline{a}^j\overline{b}^{m-j}v^{(j)}(z)}^2(1+|y|)^{-2m}e^{-|y|^2}\,dV(y)\\
\nonumber&&\leq (m+1)\sum_{j=0}^m\left[\binom{m}{j}|a|^j|b|^{m-j}\right]^2\int\limits_{\C}\abs{v^{(j)}(z)}^2(1+|y|)^{-2m}e^{-|y|^2}\,dV(y)\\
\nonumber&&\leq (m+1)\sum_{j=0}^m\left[\binom{m}{j}|a|^j|b|^{m-j}\right]^2\int\limits_{\C}\abs{v^{(j)}(z)}^2(1+|y|)^{-2j}e^{-|y|^2}\,dV(y)\\
\label{boss1}&&\leq (m+1)\sum_{j=0}^m\left[\binom{m}{j}|a|^j|b|^{m-j}\right]^2\Delta_2(j)^2\norm{v}^2,\quad\text{(by \eqref{equi-norm-Fock})}.
\end{eqnarray}
For the first integral in \eqref{boss}, we do the change of variables $x=\overline{A}y+\overline{A}B+2\overline{D}$ in order to get
\begin{eqnarray*}
&&\int\limits_{\C}|f_*(\overline{A}y+\overline{D})|^2(1+|y|)^{2m}|\overline{A}y+\overline{D}|^{2m} e^{-|y|^2-4\re(\overline{A}Dy)-2\re(\overline{B}y)}\,dV(y)\\
&&=\int\limits_{\C}|f_*(x-\overline{A}B-\overline{D})|^2(1+|x-B\overline{A}-2\overline{D}|)^{2m}|x-B\overline{A}-\overline{D}|^{2m}e^{-|x|^2}\,dV(x).
\end{eqnarray*}
For $R$ enough large, we denote $\Omega=\{z\in\C:|z|>R\}$. By Lemma \ref{Guo-ineq}, we have
\begin{eqnarray}
\nonumber&&\int\limits_{\C}|f_*(\overline{A}y+\overline{D})|^2(1+|y|)^{2m}|\overline{A}y+\overline{D}|^{2m} e^{-|y|^2-4\re(\overline{A}Dy)-2\re(\overline{B}y)}\,dV(y)\\
\nonumber&&\leq 6^{2m}\Delta_3\int\limits_{\Omega}|f_*(x-\overline{A}B-\overline{D})x^{2m}|^2 e^{-|x|^2}\,dV(x)\\
\label{x2mf*}&&\leq 6^{2m}\Delta_3\int\limits_{\C}|f_*(x-\overline{A}B-\overline{D})x^{2m}|^2 e^{-|x|^2}\,dV(x).
\end{eqnarray}
Thus, from \eqref{boss}, \eqref{boss1}, \eqref{x2mf*} and \eqref{b2mq0<}, we obtain
\begin{eqnarray*}
|\inner{g}{u}|^2
&\leq& 6^{2m}\Delta_3 \Delta_5^{-1}\left[\sum_{k=0}^{m-1}\|T\|^{m-k}E+(m+1)E\right]^2\|f\|^2\\
&&\times(m+1)\sum_{j=0}^m\left[\binom{m}{j}|a|^j|b|^{m-j}\right]^2\Delta_2(j)^2\|v\|^2,
\end{eqnarray*}
which completes the proof.
\end{proof}

\section{$\calC_{a,b,c}$-symmetry}\label{ajfmzuoqr}
In the present section we investigate the $\calC_{a,b,c}$-selfadjointness of  multi-valued weighted composition operators. It is remarkable that in this case one can compute in closed form all possible symbols $\psi,\varphi,\phi$.

We start by a technical lemma.
\begin{lem}\label{phi-psi-lem}
Let $m\in\Z_{\geq 0}$. Let $\phi\not\equiv 0$, $\varphi\not\equiv\text{const}$ and $\psi$ be entire functions. Suppose that $\psi$ nowhere vanishes. If equation
\begin{equation}\label{Cabc-symmetry}
e^{bz}\phi(z)\psi(x)(a\varphi(x)+b)^m e^{(az+b)\varphi(x)}=e^{b\varphi(z)}\phi(x)\psi(z)(a\varphi(z)+b)^m e^{x(a\varphi(z)+b)}
\end{equation}
holds for every $x,z\in\C$, then
\begin{equation}\label{phi-psi-forms}
\varphi(y)=Ay+B,\quad Ce^{Dy}\phi(y)=\psi(y)[a(Ay+B)+b]^m,\quad y\in\C,
\end{equation}
where 
\begin{equation}\label{D=}
D=b+aB-bA.
\end{equation}
In this case, equation \eqref{subspace-wco} is equivalent to 
\begin{gather}\label{subspace-wco-C-sym}
    Ce^{Dz}f(Az+B)=[a(Az+B)+b]^mg^{(m)}(z),\quad\forall z\in\C.
\end{gather}
\end{lem}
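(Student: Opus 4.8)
The plan is to treat \eqref{Cabc-symmetry} as a separation-of-variables identity that first pins $\varphi$ down as an affine map and then forces the multiplicative relation between $\phi$ and $\psi$. For the first step, note that $\psi$ is nowhere zero and $a\ne 0$ (recall $|a|=1$), so $\phi$ and $a\varphi+b$ are non-trivial entire functions and hence have discrete zero sets; let $U$ be the dense open set on which neither vanishes. Fixing $z_1\ne z_2$ in $U$, I would write \eqref{Cabc-symmetry} once with $z=z_1$ and once with $z=z_2$ and divide the two identities: on the dense open set $\{x\in\C:\phi(x)\ne 0,\ a\varphi(x)+b\ne 0\}$ the factors $\psi(x)(a\varphi(x)+b)^m$ on the left and $\phi(x)$ on the right cancel, leaving
\[
c_1\,e^{a(z_1-z_2)\varphi(x)}=c_2\,e^{xa(\varphi(z_1)-\varphi(z_2))}
\]
with nonzero constants $c_i=c_i(z_1,z_2)$. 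Both sides are entire in $x$, so this holds for every $x\in\C$, whence the entire function $x\mapsto a(z_1-z_2)\varphi(x)-xa(\varphi(z_1)-\varphi(z_2))$ has constant exponential and is therefore itself constant. Dividing by $a(z_1-z_2)$ shows $\varphi$ has degree at most one, so $\varphi(y)=Ay+B$, and $A\ne 0$ because $\varphi\not\equiv\text{const}$.

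With $\varphi(y)=Ay+B$ now in hand, I would substitute it back into \eqref{Cabc-symmetry} and abbreviate $Q(y)=\psi(y)(aAy+aB+b)^m$. Rearranging on the dense open set where the relevant factors are nonzero gives
\[
\frac{Q(x)}{\phi(x)}\cdot\frac{\phi(z)}{Q(z)}=\exp\left(b(Az+B)+x(aAz+aB+b)-bz-(az+b)(Ax+B)\right).
\]
Expanding the exponent, the quadratic term $aAxz$ and the constant $bB$ cancel and the linear terms collect into $(aB+b-bA)(x-z)=D(x-z)$, with $D=b+aB-bA$ exactly as in \eqref{D=}. Hence $w\mapsto Q(w)/\left(\phi(w)e^{Dw}\right)$ is at once a function of $x$ alone and of $z$ alone, so it equals a constant $C$; by the identity theorem $Q(w)=Ce^{Dw}\phi(w)$ on all of $\C$, which is the asserted identity $Ce^{Dy}\phi(y)=\psi(y)[a(Ay+B)+b]^m$ in \eqref{phi-psi-forms}, and $C\ne 0$ because $\psi$ is nowhere zero so $Q\not\equiv 0$.

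Finally, from \eqref{phi-psi-forms} one has $\phi(z)=\psi(z)[a(Az+B)+b]^m/(Ce^{Dz})$; inserting this into $\psi\cdot f\circ\varphi=\phi\cdot g^{(m)}$ and cancelling the nowhere-zero factor $\psi(z)$ yields exactly \eqref{subspace-wco-C-sym}. The only genuinely delicate point is the bookkeeping in the first step: the division of the two instances of \eqref{Cabc-symmetry} is legitimate only on the dense open set where all the cancelled factors are nonzero, and one then recovers the identity on all of $\C$ by analytic continuation — this is precisely where the hypothesis that $\psi$ has no zeros (together with $a\ne 0$) is used. The exponent simplification in the second step is mechanical but has to be carried out with care to land on $D=b+aB-bA$.
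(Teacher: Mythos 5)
Your argument is correct, and it reaches the conclusion by a route that differs from the paper's at the key step. The paper first observes that $g(y)=\phi(y)[\psi(y)]^{-1}[a\varphi(y)+b]^{-m}$ is entire and zero free, rewrites \eqref{Cabc-symmetry} as $g(z)e^{(az+b)\varphi(x)+bz}=g(x)e^{(a\varphi(z)+b)x+b\varphi(z)}$, proves that $\varphi$ is \emph{injective} by differentiating in $x$ at a point where $\varphi'\ne 0$, and then invokes the classical fact that an injective entire function is affine (the cited exercise in Stein--Shakarchi); the relation between $\phi$ and $\psi$ is then obtained by the specialization $x=0$. You instead take the ratio of two instances of \eqref{Cabc-symmetry} at $z_1\ne z_2$, cancel the nowhere-relevant factors on a co-discrete set, extend by the identity theorem, and read off affineness of $\varphi$ directly from the fact that an entire function with constant exponential is constant --- so you never need the univalence theorem, nor the preliminary claim that $g$ is entire and zero free. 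Your second step (full separation of variables, identifying $Q(w)/(\phi(w)e^{Dw})$ as a constant $C\ne 0$) is also a bit more robust than the paper's $x=0$ substitution, which tacitly wants $\phi(0)\ne 0$ and $aB+b\ne 0$; your version handles those degenerate configurations automatically. Do note, as you did, that both proofs use $a\ne 0$: this is not stated in the lemma itself but is forced by \eqref{abc-cond} in every application, and the paper's own injectivity argument needs it equally, so this is not a gap. The exponent bookkeeping you describe does land exactly on $D=b+aB-bA$, and the final equivalence with \eqref{subspace-wco-C-sym} by cancelling the zero-free factor $\psi$ matches the paper.
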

\begin{proof}
Since the function $\psi$ is pointwise non-zero, we infer from equation \eqref{Cabc-symmetry} that the function
$$
g(y)=\phi(y)[\psi(y)]^{-1}[a\varphi(y)+b]^{-m},\quad y\in\C
$$
is entire and moreover it is zero free. This allows us to rewrite equation \eqref{Cabc-symmetry} as follows
$$
g(z)e^{(az+b)\varphi(x)+bz}=g(x)e^{(a\varphi(z)+b)x+b\varphi(z)},\quad x,z\in\C.
$$
By \cite[Exercise 14, Chapter 3]{SS}, it suffices to indicate that $\varphi$ is injective. Indeed, suppose that $\varphi(z_1)=\varphi(z_2)$ for some $z_1,z_2\in\C$. The last equality implies
\begin{equation}\label{eq-sunday-00}
g(z_1)e^{(az_1+b)\varphi(x)+bz_1}=g(z_2)e^{(az_2+b)\varphi(x)+bz_2}\ne 0,\quad\forall x\in\C,
\end{equation}
which yields, by taking the derivative with respect to $x$, that
$$
az_1\varphi'(x)g(z_1)e^{(az_1+b)\varphi(x)+bz_1}=az_2\varphi'(x)g(z_2)e^{(az_2+b)\varphi(x)+bz_2},\quad\forall x\in\C.
$$
Since $\varphi$ is not a constant function, there is $x_0\in\C$ with the property that $\varphi'(x_0)\ne 0$. Hence, the above equation is simplified as follows:
\begin{equation}\label{eq-sunday-01}
az_1g(z_1)e^{(az_1+b)\varphi(x_0)+bz_1}=az_2g(z_2)e^{(az_2+b)\varphi(x_0)+bz_2}.
\end{equation}
From \eqref{eq-sunday-00} and \eqref{eq-sunday-01}, we find $z_1=z_2$ and so $\varphi$ is of the form in \eqref{phi-psi-forms}. Setting $x=0$ in \eqref{Cabc-symmetry}, we get the form of $\psi$ in \eqref{phi-psi-forms}.
\end{proof}

Recall that $\bS^*_{\max,\calC_{a,b,c}}$ is the $\calC_{a,b,c}$-adjoint of $\bS_{\max}$ (see Definition \ref{S-adjoint}). The following result isolates a necessary condition for a maximal multi-valued weighted composition operator to be $\calC_{a,b,c}$-selfadjoint.
\begin{prop}\label{djkdnvm}
Let $m\in\Z_{\geq 0}$ and $\bS_{\max}$ be a maximal multi-valued weighted composition operator induced by equation \eqref{subspace-wco}, where $\psi,\varphi,\phi$ are entire functions with $\phi\not\equiv 0$. Suppose that $\psi$ nowhere vanishes. If inclusion $G(\bS^*_{\max,\calC_{a,b,c}})\subset G(\bS_{\max})$ holds, then there is an equivalent set of symbols, which has the form, given by \eqref{phi-psi-forms} with condition \eqref{D=}.
\end{prop}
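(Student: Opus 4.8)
The plan is to feed the reproducing-kernel data produced by Proposition~\ref{basic-lemma-psi-ne-va} into the inclusion hypothesis, read off equation~\eqref{Cabc-symmetry}, and then invoke Lemma~\ref{phi-psi-lem}. First I would fix $z\in\C$ and apply Proposition~\ref{basic-lemma-psi-ne-va}(1) with the conjugation $\calC=\calC_{a,b,c}$, which gives $\bigl(\phi(z)\calC_{a,b,c}(K_z^{[m]}),\,\psi(z)\calC_{a,b,c}(K_{\varphi(z)})\bigr)\in G(\bS^*_{\max,\calC_{a,b,c}})$. By the hypothesis $G(\bS^*_{\max,\calC_{a,b,c}})\subset G(\bS_{\max})$, this same pair lies in $G(\bS_{\max})$, hence it satisfies the defining relation~\eqref{subspace-wco}, viewed as an identity in a variable $x\in\C$ with $z$ held fixed.

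Next I would evaluate $\calC_{a,b,c}$ on the kernels explicitly. Using the anti-linearity of $\calC_{a,b,c}$ together with the formula~\eqref{Ca,b,c-wcc}, one obtains $\calC_{a,b,c}(K_z^{[m]})(x)=c(ax+b)^m e^{x(az+b)+bz}$ and $\calC_{a,b,c}(K_{\varphi(z)})(x)=c\,e^{x(a\varphi(z)+b)+b\varphi(z)}$; in particular the second function is $c\psi(z)$ times a pure exponential in $x$, so its $m$-th $x$-derivative is immediate. Substituting $f(x)=\phi(z)\calC_{a,b,c}(K_z^{[m]})(x)$ and $g(x)=\psi(z)\calC_{a,b,c}(K_{\varphi(z)})(x)$ into $\psi(x)f(\varphi(x))=\phi(x)g^{(m)}(x)$ and cancelling the nonzero constant $c$ yields precisely equation~\eqref{Cabc-symmetry}, now known to hold for all $x,z\in\C$.

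Finally, with equation~\eqref{Cabc-symmetry} established I would invoke Lemma~\ref{phi-psi-lem} when $\varphi\not\equiv\mathrm{const}$ (its remaining hypotheses, $\phi\not\equiv0$ and $\psi$ zero-free, are part of our assumptions) to conclude $\varphi(y)=Ay+B$ and $Ce^{Dy}\phi(y)=\psi(y)[a(Ay+B)+b]^m$ with $D=b+aB-bA$; in the degenerate case $\varphi\equiv\mathrm{const}$ one separates variables directly in \eqref{Cabc-symmetry} and reaches the same conclusion with $A=0$. Either way the triple $(\psi,\varphi,\phi)$ is equivalent to one of the form~\eqref{phi-psi-forms}--\eqref{D=}. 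The only point requiring care is the explicit action of $\calC_{a,b,c}$ on $K_z^{[m]}$ and $K_{\varphi(z)}$ — keeping straight which occurrences of $z$ are conjugated under \eqref{Ca,b,c-wcc} and which are not — but this is routine; there is no genuine analytic difficulty here, the real content being already packaged in Proposition~\ref{basic-lemma-psi-ne-va} and Lemma~\ref{phi-psi-lem}.
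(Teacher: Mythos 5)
Your argument matches the paper's proof essentially step for step: apply Proposition \ref{basic-lemma-psi-ne-va}(1) with $\calC=\calC_{a,b,c}$, use the hypothesis $G(\bS^*_{\max,\calC_{a,b,c}})\subset G(\bS_{\max})$ to place the conjugated kernel pair in $G(\bS_{\max})$, compute $\calC_{a,b,c}$ on the kernels explicitly to arrive at equation \eqref{Cabc-symmetry}, and conclude via Lemma \ref{phi-psi-lem}. The only difference is your brief remark on the degenerate case $\varphi\equiv\text{const}$, which the paper passes over silently.
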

\begin{proof}
Let $z\in\C$. A direct computation gives
$$
\calC_{a,b,c}(\overline{\phi(z)}K_z^{[m]})(x)=\phi(z)ce^{bz}(ax+b)^m e^{(az+b)x}
$$
and
$$
\calC_{a,b,c}(\overline{\psi(z)}K_{\varphi(z)})(x)=\psi(z)ce^{b\varphi(z)}e^{(a\varphi(z)+b)x}.
$$
Proposition \ref{basic-lemma-psi-ne-va} implies
$$
(\calC_{a,b,c}(\overline{\phi(z)}K_z^{[m]}),\calC_{a,b,c}(\overline{\psi(z)}K_{\varphi(z)}))\in G(\bS^*_{\max,\calC_{a,b,c}})\subset G(\bS_{\max}),
$$
and so the pair $(\calC_{a,b,c}(\overline{\phi(z)}K_z^{[m]}),\calC_{a,b,c}(\overline{\psi(z)}K_{\varphi(z)}))$ verifies equation \eqref{subspace-wco}. Consequently, we obtain equation \eqref{Cabc-symmetry}, and hence by Lemma \ref{phi-psi-lem}, $\phi,\psi$ are of the form prescribed by \eqref{phi-psi-forms}.
\end{proof}

It turns out that the conclusion in Proposition \ref{djkdnvm} is also a sufficient condition.
\begin{thm}\label{thm-Cabc-self-maximal}
Let $m\in\Z_{\geq 0}$ and $\bS_{\max}$ be a maximal multi-valued weighted composition operator induced by equation \eqref{subspace-wco}, where $\psi,\varphi,\phi$ are entire functions with $\phi\not\equiv 0$. Suppose that $\psi$ nowhere vanishes. Then the following assertions are equivalent.
\begin{enumerate}
\item The operator $\bS_{\max}$ is $\calC_{a,b,c}$-selfadjoint.
\item The inclusion $G(\bS_{\max,\calC_{a,b,c}}^*)\subset G(\bS_{\max})$ holds.
\item There is an equivalent set of symbols, which has the form, given by \eqref{phi-psi-forms} with condition \eqref{D=}.
\end{enumerate}
\end{thm}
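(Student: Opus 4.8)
The plan is to establish the cycle of implications $(3)\Rightarrow(1)\Rightarrow(2)\Rightarrow(3)$. The implication $(2)\Rightarrow(3)$ is precisely the content of Proposition \ref{djkdnvm}, which is already available, and $(1)\Rightarrow(2)$ is trivial since $\calC_{a,b,c}$-selfadjointness means $G(\bS^*_{\max,\calC_{a,b,c}})=G(\bS_{\max})$, which in particular gives the inclusion. So the genuine work is in proving $(3)\Rightarrow(1)$: assuming the symbols have the normalized form \eqref{phi-psi-forms} with \eqref{D=}, show that $\bS_{\max}$ is $\calC_{a,b,c}$-selfadjoint, i.e. $G(\bS^*_{\max,\calC_{a,b,c}})=G(\bS_{\max})$.

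To carry out $(3)\Rightarrow(1)$, I would first use Remark \ref{equi-AC*=CAC}, which identifies $G(\bS^*_{\max,\calC_{a,b,c}})$ with $\{(\calC_{a,b,c}x,\calC_{a,b,c}y):(x,y)\in G(\bS^*_{\max})\}$, so the statement reduces to computing $\bS^*_{\max}$ and then conjugating. This is exactly where Theorem \ref{formula-adjoint} enters: under the normalized symbols, $\bS^*_{\max}=\widehat{\bS}_{\max}$, the operator attached to equation \eqref{eq-for-adjoin}. The core of the proof is then the purely algebraic verification that applying the conjugation $\calC_{a,b,c}$ to the graph relation \eqref{eq-for-adjoin} — using the substitution rules $\calC_{a,b,c}f(z)=ce^{bz}\overline{f(\overline{az+b})}$ together with the constraints \eqref{abc-cond} on $a,b,c$ and the relation \eqref{D=} between $D$ and $A,B$ — converts \eqref{eq-for-adjoin} back into equation \eqref{subspace-wco-C-sym}, which by Lemma \ref{phi-psi-lem} is equivalent to the defining equation \eqref{subspace-wco} of $\bS_{\max}$. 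Concretely, I expect the product rule manipulation $\sum_j\binom{m}{j}\overline{a}^j\overline{b}^{m-j}g^{(j)}(z)=\overline a^m e^{bz}(e^{-bz}g(z))^{(m)}$ (already used in Case 1 of the proof of Theorem \ref{formula-adjoint}) to be the bridge, after which one tracks how the exponential factors $e^{\overline{B}z}$, $e^{bz}$, the affine arguments $\overline{A}z+\overline{D}$ versus $\overline{az+b}$, and the powers $(\overline{A}z+\overline{D})^m$ versus $(a\varphi(x)+b)^m$ match up under conjugation.

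The main obstacle I anticipate is bookkeeping rather than conceptual: one has to be careful that Theorem \ref{formula-adjoint} applies, which requires the symbols $\widehat\psi,\widehat\varphi,\widehat\phi$ there to be read off correctly from \eqref{phi-psi-forms}, and that the case analysis on $|A|<1$, $|A|>1$, $|A|=1$ does not re-enter here — since \eqref{phi-psi-forms} forces no restriction on $|A|$, one must confirm $\bS^*_{\max}=\widehat{\bS}_{\max}$ holds in all three cases (which Theorem \ref{formula-adjoint} indeed asserts unconditionally). A secondary delicate point is that $\calC_{a,b,c}$-selfadjointness is an equality of graphs, not merely an inclusion, so after showing $G(\bS^*_{\max,\calC_{a,b,c}})\subseteq G(\bS_{\max})$ one must also argue the reverse; but applying the already-established forward inclusion to the $\calC_{a,b,c}$-adjoint of $\bS_{\max}$ — or equivalently using that $\calC_{a,b,c}$ is an involution so that taking $\calC_{a,b,c}$-adjoints twice returns the closed operator $\bS_{\max}$ (it is closed by Proposition \ref{W-closed}) — closes the loop. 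I would then remark that once $(3)\Rightarrow(1)$ is in hand, combining with Proposition \ref{djkdnvm} for $(2)\Rightarrow(3)$ and the obvious $(1)\Rightarrow(2)$ completes the equivalence.
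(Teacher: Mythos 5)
Your proposal follows essentially the same route as the paper's proof: the trivial $(1)\Rightarrow(2)$, Proposition \ref{djkdnvm} for $(2)\Rightarrow(3)$, and for $(3)\Rightarrow(1)$ the combination of Theorem \ref{formula-adjoint}, Remark \ref{equi-AC*=CAC}, and the product-rule identity $\sum_j\binom{m}{j}\overline{a}^j\overline{b}^{m-j}g^{(j)}=\overline{a}^m e^{bz}(e^{-bz}g)^{(m)}$ to transform \eqref{eq-for-adjoin} under $\calC_{a,b,c}$ back into \eqref{subspace-wco-C-sym}. The ``secondary delicate point'' you raise is not needed (and the double-$\calC$-adjoint trick as stated would only reproduce the same inclusion): since each step of the conjugation computation is an equivalence, the chain $(f,g)\in G(\bS_{\max,\calC_{a,b,c}}^*)\Leftrightarrow(\calC_{a,b,c}f,\calC_{a,b,c}g)$ verifies \eqref{eq-for-adjoin} $\Leftrightarrow(f,g)\in G(\bS_{\max})$ yields the graph equality directly, exactly as in the paper.
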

\begin{proof}
It is obvious that $(1)\Longrightarrow(2)$, meanwhile implication $(2)\Longrightarrow(3)$ holds by Proposition \ref{djkdnvm}. It remains to prove $(3)\Longrightarrow(1)$. Suppose that the symbols are as  in \eqref{phi-psi-forms} with condition \eqref{D=}. In view of equation \eqref{subspace-wco-C-sym}, we can suppose 
$$
\psi(z)=Ce^{Dz},\quad\phi(z)=[a(Az+B)+b]^m.
$$
We aim at showing that $G(\bS_{\max,\calC_{a,b,c}}^*)=G(\bS_{\max})$. By Theorem \ref{formula-adjoint}, the adjoint $\bS^*$ is the maximal multi-valued weighted composition operator induced by equation \eqref{eq-for-adjoin}. It was indicated in Remark \ref{equi-AC*=CAC} that $(f,g)\in G(\bS_{\max,\calC_{a,b,c}}^*)$ if and only if $(\calC_{a,b,c}f,\calC_{a,b,c}g)\in G(\bS_{\max}^*)$ or equivalently the pair $(\calC_{a,b,c}f,\calC_{a,b,c}g)$ verifies equation \eqref{eq-for-adjoin}. Denoting $f_*=\calC_{a,b,c}f$ and $g_*=\calC_{a,b,c}g$, equation \eqref{eq-for-adjoin} becomes
$$
e^{-bz}\widehat{\psi}(z)f_*(\widehat{\varphi}(z))=\widehat{\phi}(z)\overline{a}^m(e^{-bz}g_*(z))^{(m)},\quad\forall z\in\C,
$$
where $\widehat{\psi},\widehat{\varphi},\widehat{\phi}$ are of forms in \eqref{widehat-symbols}. We obtain
$$
e^{-bz}\widehat{\psi}(z)f_*(\widehat{\varphi}(z))=\overline{C}ce^{b\overline{D}+(\overline{A}b+\overline{B}-b)z}\overline{f(A\overline{az}+\overline{a}D+\overline{b})},
$$
which implies, by \eqref{D=} and \eqref{abc-cond}, that
$$
e^{-bz}\widehat{\psi}(z)f_*(\widehat{\varphi}(z))=\overline{C}ce^{\overline{D}(az+b)}\overline{f(A\overline{az+b}+B)}.
$$
Meanwhile,
\begin{eqnarray*}
\widehat{\phi}(z)\overline{a}^m(e^{-bz}g_*(z))^{(m)}
&=&c(\overline{A}z+\overline{D})^m\overline{a}^m\left[\overline{g(\overline{az+b})}\right]^{(m)}\\
&=&c(\overline{A}z+\overline{D})^m\overline{g^{(m)}(\overline{az+b})}.
\end{eqnarray*}
Thus, equation \eqref{eq-for-adjoin} is equivalent to $(f,g)\in G(\bS_{\max})$.
\end{proof}

The previous statement focused on the $\calC_{a,b,c}$-selfadjointness of multi-valued weighted composition operators with maximal domains. The following result shows that the maximality of the domain is a consequence of $\calC_{a,b,c}$-selfadjointness.
\begin{thm}\label{res-Cabc-sym}
Let $m\in\Z_{\geq 0}$ and $\bS$ be a multi-valued weighted composition operator induced by equation \eqref{subspace-wco}, where $\psi,\varphi,\phi$ are entire functions with $\phi\not\equiv 0$. Suppose that $\psi$ is zero free. Then $\bS$ is $\calC_{a,b,c}$-selfadjoint if and only if the following conditions hold.
\begin{enumerate}
\item $\bS=\bS_{\max}$.
\item There is an equivalent set of symbols, which has the form, given by \eqref{phi-psi-forms} with condition \eqref{D=}.
\end{enumerate}
\end{thm}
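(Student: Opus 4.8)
The plan is to derive both implications from the maximal case already settled in Theorem~\ref{thm-Cabc-self-maximal}, combined with the elementary observation that passing to the $\calC_{a,b,c}$-adjoint reverses inclusions of graphs and that every $\bS$ induced by \eqref{subspace-wco} is, by definition, a restriction of $\bS_{\max}$.

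For the sufficiency, assume $\bS=\bS_{\max}$ and that, up to equivalence, the symbols are of the form \eqref{phi-psi-forms} with condition \eqref{D=}. This is precisely assertion (3) of Theorem~\ref{thm-Cabc-self-maximal}, so the implication $(3)\Longrightarrow(1)$ there yields that $\bS_{\max}=\bS$ is $\calC_{a,b,c}$-selfadjoint.

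For the necessity, suppose $\bS$ is $\calC_{a,b,c}$-selfadjoint, i.e. $G(\bS_{\calC_{a,b,c}}^*)=G(\bS)$. By the definition of a multi-valued weighted composition operator induced by \eqref{subspace-wco} we have $G(\bS)\subseteq G(\bS_{\max})$. Since the $\calC_{a,b,c}$-adjoint reverses inclusions (any pair $(u,v)$ with $\langle g,\calC_{a,b,c}u\rangle=\langle f,\calC_{a,b,c}v\rangle$ for all $(f,g)\in G(\bS_{\max})$ satisfies the same relation a fortiori for all $(f,g)\in G(\bS)$), we obtain $G(\bS_{\max,\calC_{a,b,c}}^*)\subseteq G(\bS_{\calC_{a,b,c}}^*)=G(\bS)$. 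Hence
$$
G(\bS_{\max,\calC_{a,b,c}}^*)\subseteq G(\bS)\subseteq G(\bS_{\max}).
$$
In particular $G(\bS_{\max,\calC_{a,b,c}}^*)\subset G(\bS_{\max})$, which is exactly assertion (2) of Theorem~\ref{thm-Cabc-self-maximal}. Consequently assertions (1) and (3) of that theorem also hold: the symbols admit an equivalent representation of the form \eqref{phi-psi-forms} with \eqref{D=} (this is item~(2) of the present statement), and $\bS_{\max}$ is $\calC_{a,b,c}$-selfadjoint, i.e. $G(\bS_{\max,\calC_{a,b,c}}^*)=G(\bS_{\max})$. Substituting this equality into the displayed chain of inclusions forces $G(\bS)=G(\bS_{\max})$, that is $\bS=\bS_{\max}$, which is item~(1).

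The whole argument is thus a short piece of bookkeeping once Theorem~\ref{thm-Cabc-self-maximal} (and behind it Proposition~\ref{djkdnvm} and the adjoint formula of Theorem~\ref{formula-adjoint}) is available; the only point that requires any care is the inclusion-reversing property of the $\calC_{a,b,c}$-adjoint for multi-valued operators together with the fact that $\bS$ is by fiat a restriction of $\bS_{\max}$. I do not expect a genuine obstacle: the substantive analysis has already been carried out in the maximal case.
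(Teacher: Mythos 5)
Your proposal is correct and follows essentially the same route as the paper: both directions are reduced to Theorem~\ref{thm-Cabc-self-maximal} via the chain $G(\bS_{\max,\calC_{a,b,c}}^*)\subseteq G(\bS_{\calC_{a,b,c}}^*)=G(\bS)\subseteq G(\bS_{\max})$, from which the symbol form and then $\bS=\bS_{\max}$ follow. The only cosmetic difference is that you invoke assertion (2) of Theorem~\ref{thm-Cabc-self-maximal} directly where the paper first passes through Proposition~\ref{djkdnvm}, which amounts to the same thing.
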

\begin{proof}
The sufficiency implication follows directly from Theorem \ref{thm-hermitian-maximal}. To prove the necessity, we suppose that $\bS$ is $\calC_{a,b,c}$-selfadjoint, which means $G(\bS)=G(\bS_{\calC_{a,b,c}}^*)$. Since $G(\bS)\subset G(\bS_{\max})$, we have
$$
G(\bS_{\max,\calC_{a,b,c}}^*)\subset G(\bS_{\calC_{a,b,c}}^*)=G(\bS)\subset G(\bS_{\max}).
$$
According to Proposition \ref{djkdnvm} assertion (2) is valid, therefore by Theorem \ref{thm-Cabc-self-maximal} the operator $\bS_{\max}$ is $\calC_{a,b,c}$-selfadjoint. Thus, item (1) follows from the following inclusions
$$
G(\bS)\subset G(\bS_{\max})=G(\bS_{\max,\calC_{a,b,c}}^*)\subset G(\bS^*_{\calC_{a,b,c}})=G(\bS).
$$
\end{proof}

\begin{rem}
Although larger and more general than single-valued operators described in \cite{hai2018unbounded}, multi-valued operators appearing in Theorem \ref{res-Cabc-sym} inherit similar properties, such as their domains cannot be the whole Fock space when $m\geq 1$ (see Proposition \ref{dom-Smax} for a detailed explanation).
\end{rem}

\section{Hermitian multi-valued weighted composition operators}\label{sec-her}
We identify in the present section all hermitian multi-valued weighted composition operators acting on Fock space $\calF^2$. Our main results indicate two very restrictive constraints:
\bigskip

 (i) the maximality of the domain is a consequence of the hermitian property,
\bigskip
 
 (ii) $m=0$ is the only case giving rise to hermitian, \emph{single-valued} weighted composition operators.
 \bigskip
 
 In addition, we prove that hermitian, multi-valued weighted composition operators are properly contained in the class of $\calC_{a,b,c}$-selfadjoint operators.

We start by a lemma which focuses on symbol computation.
\begin{lem}\label{lemma-tech-her}
Let $\phi\not\equiv 0$, $\varphi\not\equiv\text{const}$, $\psi\not\equiv 0$ be entire functions. Suppose that $\psi$ nowhere vanishes. If equation
\begin{equation}\label{important-her}
\psi(u)\varphi(u)^m \overline{\phi(z)}e^{\varphi(u)\overline{z}}=\phi(u)\overline{\psi(z)\varphi(z)^m}e^{u\overline{\varphi(z)}},\quad\forall z,u\in\C
\end{equation}
holds, then these functions are of the following form:
\begin{equation}\label{forms-when-hermitian}
\varphi(u)=Au+B,\quad Ce^{u\overline{B}}\phi(u)=\psi(u)(Au+B)^m,\quad u\in\C,
\end{equation}
where $A\in\R$, $C\in\R\setminus\{0\}$, and $B\in\C$. In this case, equation \eqref{subspace-wco} is equivalent to
\begin{gather}\label{subspace-wco-her}
    Ce^{u\overline{B}}f(Au+B)=(Au+B)^mg^{(m)}(u),\quad\forall u\in\C.
\end{gather}
\end{lem}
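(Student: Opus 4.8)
The plan is to follow the strategy of Lemma~\ref{phi-psi-lem}, adapted to the Hermitian (conjugate-linear in the second slot) symmetry of \eqref{important-her}. The central object is the meromorphic function
$$
G(u)=\psi(u)\,\varphi(u)^m\,[\phi(u)]^{-1},\qquad u\in\C ,
$$
which is well defined and not identically zero since $\psi$ is zero free, $\varphi\not\equiv\text{const}$, and $\phi\not\equiv 0$.

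\emph{Step 1: $G$ is entire and zero free.} Choose $z_0\in\C$ with $\phi(z_0)\varphi(z_0)\neq 0$, which is possible because $\zer(\phi)$ and $\zer(\varphi)$ are discrete while $\psi$ never vanishes. Putting $z=z_0$ in \eqref{important-her} and dividing by the nonzero constant $\overline{\phi(z_0)}$ produces the identity of entire functions
$$
\psi(u)\,\varphi(u)^m\,e^{\varphi(u)\overline{z_0}}=\overline{G(z_0)}\,\phi(u)\,e^{u\overline{\varphi(z_0)}},\qquad u\in\C .
$$
The right-hand side is entire and zero free, hence so is the left-hand side; dividing by $\phi(u)\,e^{\varphi(u)\overline{z_0}}$ shows that $G$ extends to an entire zero-free function. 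Dividing \eqref{important-her} itself by $\phi(u)\,\overline{\phi(z)}$ then gives the clean functional equation
\begin{equation}
G(u)\,e^{\varphi(u)\overline{z}}=\overline{G(z)}\,e^{u\overline{\varphi(z)}},\qquad u,z\in\C ,
\tag{$\star$}
\end{equation}
which holds first off the discrete set $\zer(\phi)$ and then everywhere by continuity.

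\emph{Step 2: the forms of $\varphi$ and $G$.} Differentiating $(\star)$ in $u$ and dividing by $(\star)$ itself yields
$$
\frac{G'(u)}{G(u)}+\varphi'(u)\,\overline{z}=\overline{\varphi(z)},\qquad u,z\in\C .
$$
Setting $z=0$ forces $G'/G\equiv\overline{\varphi(0)}$, so $G(u)=Ce^{u\overline{B}}$ with $B:=\varphi(0)$ and $C:=G(0)\neq 0$; feeding this back, $\varphi'(u)\,\overline{z}=\overline{\varphi(z)}-\overline{B}$ for all $u,z$, so (taking any $z\neq 0$) $\varphi'$ is constant, say $\varphi(u)=Au+B$, and then $\overline{A}\,\overline{z}=A\,\overline{z}$ for all $z$, i.e.\ $A\in\R$ (and $A\neq 0$ since $\varphi$ is non-constant). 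Substituting $G(u)=Ce^{u\overline{B}}$ and $\varphi(u)=Au+B$ into $(\star)$, all exponentials cancel and one is left with $C=\overline{C}$, so $C\in\R\setminus\{0\}$. Unwinding $G=\psi\varphi^m/\phi$ gives $Ce^{u\overline{B}}\phi(u)=\psi(u)(Au+B)^m$, which is \eqref{forms-when-hermitian}.

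\emph{Step 3: equivalence of \eqref{subspace-wco} and \eqref{subspace-wco-her}.} Multiplying \eqref{subspace-wco}, namely $\psi(u)f(Au+B)=\phi(u)g^{(m)}(u)$, by $(Au+B)^m$ and using $\psi(u)(Au+B)^m=Ce^{u\overline{B}}\phi(u)$ gives $Ce^{u\overline{B}}\phi(u)f(Au+B)=\phi(u)(Au+B)^m g^{(m)}(u)$; since $\phi\not\equiv 0$, cancelling $\phi(u)$ (by the Identity Theorem) produces \eqref{subspace-wco-her}, and the steps are reversible. The only delicate points are the bookkeeping of the mixed holomorphic/anti-holomorphic dependence in \eqref{important-her} and the argument that $G$ extends across the zeros of $\phi$ and $\varphi$ so that $(\star)$ is a genuine identity; once $(\star)$ is in hand the remainder is a short computation.
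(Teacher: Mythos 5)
Your proof is correct, but it takes a genuinely different route at the decisive step. The paper works with the reciprocal function $g=\phi\,[\psi]^{-1}[\varphi]^{-m}$, deduces from \eqref{important-her} that it is entire and zero free, rewrites the hypothesis as $\overline{g(z)}e^{\varphi(u)\overline{z}}=g(u)e^{u\overline{\varphi(z)}}$, and then proves that $\varphi$ is \emph{injective} (by differentiating in $u$ at a hypothetical pair $z_1,z_2$ with $\varphi(z_1)=\varphi(z_2)$), so that the affinity of $\varphi$ comes from the classification of injective entire functions cited as \cite[Exercise 14, Chapter 3]{SS}; the forms and the reality of the constants then follow by setting $z=0$ and substituting back, exactly as in Lemma \ref{phi-psi-lem}. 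You instead take the logarithmic derivative of the functional equation in $u$ and separate variables, $G'(u)/G(u)+\varphi'(u)\overline{z}=\overline{\varphi(z)}$, which at $z=0$ forces $G$ to be a pure exponential and then forces $\varphi'$ to be constant; this yields the affine form of $\varphi$, the exponential form of $G$, and (after substitution) $A\in\R$, $C\in\R\setminus\{0\}$ in one sweep, without invoking the injectivity classification — a more self-contained argument, at the price of being slightly longer than the paper's citation-based shortcut. You also spell out the equivalence of \eqref{subspace-wco} and \eqref{subspace-wco-her}, which the paper leaves implicit; that verification is fine. One wording slip in your Step 1: the right-hand side $\overline{G(z_0)}\,\phi(u)\,e^{u\overline{\varphi(z_0)}}$ is \emph{not} zero free when $\phi$ has zeros (nor is the left-hand side when $\varphi$ does), so the sentence ``the right-hand side is entire and zero free, hence so is the left-hand side'' should be dropped; what your displayed identity actually gives, after dividing by $\phi(u)e^{\varphi(u)\overline{z_0}}$ off $\zer(\phi)$, is that $G$ coincides there with the zero-free entire function $\overline{G(z_0)}\,e^{u\overline{\varphi(z_0)}-\varphi(u)\overline{z_0}}$ and hence extends to one — which is the statement you use, so the proof stands.
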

\begin{proof}
Since $\psi$ is not vanishing,  equation \eqref{Cabc-symmetry} implies the function
$$
g(y)=\phi(y)[\psi(y)]^{-1}[\varphi(y)]^{-m},\quad y\in\C
$$
is entire and moreover it has no zeros. This allows us to rewrite equation \eqref{important-her} as follows
$$
\overline{g(z)}e^{\varphi(u)\overline{z}}=g(u)e^{u\overline{\varphi(z)}},\quad u,z\in\C.
$$
By \cite[Exercise 14, Chapter 3]{SS}, it suffices to check that $\varphi$ is injective. Indeed, suppose that $\varphi(z_1)=\varphi(z_2)$ for some $z_1,z_2\in\C$. The last equality yields
$$
\overline{g(z_1)}e^{\varphi(u)\overline{z_1}}=\overline{g(z_2)}e^{\varphi(u)\overline{z_2}}\ne 0,
$$
which implies, by taking the derivative with respect to $u$:
$$
\overline{g(z_1)}e^{\varphi(u)\overline{z_1}}\overline{z_1}\varphi'(u)=\overline{g(z_2)}e^{\varphi(u)\overline{z_2}}\overline{z_2}\varphi'(u).
$$
Since $\varphi$ is not a constant function, there is $x_0\in\C$ with the property that $\varphi'(x_0)\ne 0$. Hence, the above equation gives $z_1=z_2$ and so $\varphi(z)=Az+B$. In \eqref{important-her}, we let $z=0$ in order to get
$$
g(u)e^{u\overline{B}}=\overline{g(0)},\quad u\in\C,
$$
and so $g(0)\in\R\setminus\{0\}$. Substituting the forms of $g$ and $\varphi$ back into \eqref{important-her}, we get $A\in\R$ and the proof is complete.
\end{proof}

A necessary condition for \emph{maximal} multi-valued weighted composition operators to be hermitian is provided by the following proposition
\begin{prop}\label{hermitian-nes}
Let $m\in\Z_{\geq 0}$ and $\bS_{\max}$ be a maximal multi-valued weighted composition operator induced by equation \eqref{subspace-wco}, where $\psi,\varphi,\phi$ are entire functions with $\phi\not\equiv 0$. Suppose that $\psi$ nowhere vanishes. If inclusion $G(\bS_{\max}^*)\subset G(\bS_{\max})$ holds, then there is an equivalent set of symbols, which has the form, given by \eqref{forms-when-hermitian}, where $A\in\R$, $C\in\R\setminus\{0\}$, $B\in\C$.
\end{prop}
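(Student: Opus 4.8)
The plan is to specialize the argument of Proposition \ref{djkdnvm} to the identity in place of the conjugation $\calC_{a,b,c}$. The only input needed is Proposition \ref{basic-lemma}, which furnishes, for each fixed $z\in\C$, the explicit element
$$
\left(\overline{\phi(z)}K_z^{[m]},\ \overline{\psi(z)}K_{\varphi(z)}\right)\in G(\bS_{\max}^*).
$$
By hypothesis $G(\bS_{\max}^*)\subset G(\bS_{\max})$, so this pair belongs to $G(\bS_{\max})$ and therefore satisfies the defining equation \eqref{subspace-wco}.

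The next step is to write out equation \eqref{subspace-wco} for this particular pair. Using $K_z^{[m]}(x)=x^m e^{x\overline z}$, the left-hand side $\psi\cdot\bigl(\overline{\phi(z)}K_z^{[m]}\bigr)\circ\varphi$ evaluated at $u$ equals $\psi(u)\varphi(u)^m\overline{\phi(z)}\,e^{\varphi(u)\overline z}$. On the right-hand side, $\overline{\psi(z)}K_{\varphi(z)}$ is the function $x\mapsto\overline{\psi(z)}\,e^{x\overline{\varphi(z)}}$, whose $m$-th derivative evaluated at $u$ is $\overline{\psi(z)}\,\overline{\varphi(z)}^m\,e^{u\overline{\varphi(z)}}$; multiplying by $\phi(u)$ gives the right-hand side. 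Equating the two expressions for all $u,z\in\C$ is precisely equation \eqref{important-her}. Since $\psi$ is zero free, $\phi\not\equiv 0$, and $\varphi$ may be assumed non-constant (exactly as in Lemma \ref{lemma-tech-her} and in the proof of Proposition \ref{djkdnvm}), Lemma \ref{lemma-tech-her} applies and yields an equivalent triple of symbols of the form \eqref{forms-when-hermitian} with $A\in\R$, $C\in\R\setminus\{0\}$, $B\in\C$, which is the assertion.

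I do not expect a genuine obstacle: the statement is a short consequence of Proposition \ref{basic-lemma} together with Lemma \ref{lemma-tech-her}, in which all the analytic content --- forcing $\varphi$ to be affine and forcing the reality of $A$ and $C$ --- has already been isolated. The two points deserving a moment of care are differentiating $\overline{\psi(z)}K_{\varphi(z)}$ with respect to the correct variable, so that the exponent $e^{x\overline{\varphi(z)}}$ produces exactly the factor $\overline{\varphi(z)}^m$ occurring in \eqref{important-her}, and then matching the resulting functional identity term by term with the hypothesis of Lemma \ref{lemma-tech-her} (noting in particular $\overline{\psi(z)}\,\overline{\varphi(z)}^m=\overline{\psi(z)\varphi(z)^m}$). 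If one wished to remove the tacit non-constancy assumption on $\varphi$, the degenerate case $\varphi\equiv\text{const}$ would have to be dispatched separately, but it is excluded throughout this part of the paper.
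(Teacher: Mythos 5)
Your proof is correct and follows essentially the same route as the paper: apply Proposition \ref{basic-lemma} to get $(\overline{\phi(z)}K_z^{[m]},\overline{\psi(z)}K_{\varphi(z)})\in G(\bS_{\max}^*)$, push it into $G(\bS_{\max})$ via the assumed inclusion, write out equation \eqref{subspace-wco} for this pair to obtain \eqref{important-her}, and conclude with Lemma \ref{lemma-tech-her}. Your explicit computation of the $m$-th derivative of $\overline{\psi(z)}K_{\varphi(z)}$, and your remark that the non-constancy of $\varphi$ is used tacitly, are both consistent with (indeed slightly more careful than) the paper's argument.
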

\begin{proof}
By Proposition \ref{basic-lemma}, for every $z\in\C$, we have $\overline{\phi(z)}K_z^{[m]}\in\text{dom}(\bS_{\max}^*)$, and furthermore,
$$
\overline{\psi(z)}K_{\varphi(z)}\in\bS_{\max}^*\left(\overline{\phi(z)}K_z^{[m]}\right).
$$
Due to the hermicity, $\overline{\phi(z)}K_z^{[m]}\in\text{dom}(\bS_{\max})$, and furthermore,
$$
\overline{\psi(z)}K_{\varphi(z)}\in\bS_{\max}\left(\overline{\phi(z)}K_z^{[m]}\right).
$$
Consequently, taking into account form \eqref{subspace-wco} of $\bS_{\max}$, equation \eqref{important-her} follows. Thus, we invoke Lemma \ref{lemma-tech-her} to reach the desired conclusion.
\end{proof}

It turns out that the assertion in Proposition \ref{hermitian-nes} is also sufficient for a \emph{maximal} weighted composition operator to be hermitian.
\begin{thm}\label{thm-hermitian-maximal}
Let $m\in\Z_{\geq 0}$ and $\bS_{\max}$ be a maximal multi-valued weighted composition operator induced by equation \eqref{subspace-wco}, where $\psi,\varphi,\phi$ are entire functions with $\phi\not\equiv 0$. Suppose that $\psi$ nowhere vanishes. Then the following assertions are equivalent.
\begin{enumerate}
\item The operator $\bS_{\max}$ is hermitian.
\item The inclusion $G(\bS_{\max}^*)\subseteq G(\bS_{\max}$) holds.
\item There is an equivalent set of symbols, which has the form, given by \eqref{forms-when-hermitian}, where $A\in\R$, $C\in\R\setminus\{0\}$, $B\in\C$.
\end{enumerate}
\end{thm}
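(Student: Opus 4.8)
The plan is to prove the cycle $(1)\Rightarrow(2)\Rightarrow(3)\Rightarrow(1)$. The first implication is immediate from the definition of hermiticity: $G(\bS_{\max}^*)=G(\bS_{\max})$ trivially yields $G(\bS_{\max}^*)\subseteq G(\bS_{\max})$. The second implication $(2)\Rightarrow(3)$ is exactly Proposition \ref{hermitian-nes}. Hence the only substantive step is the reverse direction $(3)\Rightarrow(1)$, and the strategy there is to feed the special symbol shapes into the explicit adjoint formula of Theorem \ref{formula-adjoint} and verify that the operator it produces is $\bS_{\max}$ itself.

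For $(3)\Rightarrow(1)$ I would first normalize. By item (3) the triple is equivalent to one of the form \eqref{forms-when-hermitian}, and the remark following Lemma \ref{lemma-tech-her} shows that equation \eqref{subspace-wco} is then equivalent to \eqref{subspace-wco-her}; so without loss of generality $\psi(z)=Ce^{\overline{B}z}$, $\varphi(z)=Az+B$, $\phi(z)=(Az+B)^m$, with $A\in\R$, $C\in\R\setminus\{0\}$, $B\in\C$. The key observation is that $(Az+B)^m=(aAz+aB+b)^m$ for the admissible choice $a=1$, $b=0$ (and $c=1$), which satisfies \eqref{abc-cond}. Theorem \ref{formula-adjoint} therefore applies, with $D=\overline{B}$, and gives $\bS_{\max}^*=\widehat{\bS}_{\max}$, the maximal operator induced by equation \eqref{eq-for-adjoin}.

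It then remains to identify $\widehat{\bS}_{\max}$ with $\bS_{\max}$. Reading off \eqref{widehat-symbols} and using that $C$ and $A$ are real while $D=\overline{B}$, one finds $\widehat{\psi}(z)=\overline{C}e^{\overline{B}z}=\psi(z)$, $\widehat{\varphi}(z)=\overline{A}z+\overline{D}=Az+B=\varphi(z)$, and $\widehat{\phi}(z)=(\overline{A}z+\overline{D})^m=(Az+B)^m=\phi(z)$. Furthermore, with $a=1$ and $b=0$ the sum on the right of \eqref{eq-for-adjoin} collapses, since $\overline{b}^{m-j}=0$ whenever $j<m$, leaving only the $j=m$ term $g^{(m)}(z)$; thus \eqref{eq-for-adjoin} reduces to $\psi(z)f(\varphi(z))=\phi(z)g^{(m)}(z)$, which is precisely \eqref{subspace-wco}. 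Hence $G(\bS_{\max}^*)=G(\bS_{\max})$, i.e. $\bS_{\max}$ is hermitian, and the cycle closes.

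The only genuinely hard input is Theorem \ref{formula-adjoint}, already proved above; granted that, the argument here is a short verification requiring no new estimates. The one point deserving care is the sub-case $A=-1$: then $A\overline{B}+D=\overline{B}(A+1)=0$, so this instance of $|A|=1$ is governed by the boundedness hypothesis \eqref{cond-bdd-multi} and handled via Proposition \ref{dom-closed}, rather than by the $|A|=1$, $A\overline{B}+D\ne0$ branch; one should make sure Theorem \ref{formula-adjoint} indeed covers it.
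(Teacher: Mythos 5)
Your proof is correct and follows the paper's own route: $(1)\Rightarrow(2)$ is trivial, $(2)\Rightarrow(3)$ is Proposition \ref{hermitian-nes}, and $(3)\Rightarrow(1)$ is obtained by specializing Theorem \ref{formula-adjoint} (here with $a=1$, $b=0$, $c=1$, $D=\overline{B}$, $A,C$ real) so that \eqref{eq-for-adjoin} collapses to \eqref{subspace-wco}; the paper records this last step only as ``makes use of Theorem \ref{formula-adjoint}'', and your computation is exactly the intended verification. Your closing caution about the sub-case $|A|=1$, $A\overline{B}+D=0$ (e.g.\ $A=-1$, or $A=1$, $B=0$) is a fair remark about the case split inside the proof of Theorem \ref{formula-adjoint} itself, but since both you and the paper invoke that theorem as stated, it does not affect the present argument.
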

\begin{proof}
It is obvious that $(1)\Longrightarrow(2)$, meanwhile implication $(2)\Longrightarrow(3)$ holds by Proposition \ref{hermitian-nes}. The proof for implication $(3)\Longrightarrow(1)$ makes use of Theorem \ref{formula-adjoint}.
\end{proof}

In the previous result, we studied the hermitian property of multi-valued weighted composition operators with maximal domains. The following result relaxes the domain assumption to only reveal that the hermitian property cannot be separated from the maximality of the domain.
\begin{thm}\label{dfhafgs}
Let $m\in\Z_{\geq 0}$ and $\bS$ be a multi-valued weighted composition operator induced by equation \eqref{subspace-wco}, where $\psi,\varphi,\phi$ are entire functions with $\phi\not\equiv 0$. Suppose that $\psi$ nowhere vanishes. Then $\bS$ is hermitian if and only if the following conditions hold.
\begin{enumerate}
\item $\bS=\bS_{\max}$.
\item There is an equivalent set of symbols, which has the form, given by \eqref{forms-when-hermitian}, where $A\in\R$, $C\in\R\setminus\{0\}$, $B\in\C$.
\end{enumerate}
\end{thm}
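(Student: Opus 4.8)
The plan is to mirror the structure of the proof of Theorem~\ref{res-Cabc-sym}, reducing everything to the maximal-domain characterization in Theorem~\ref{thm-hermitian-maximal} together with the order-reversing behaviour of the adjoint operation with respect to graph inclusion. For the sufficiency direction, suppose $\bS=\bS_{\max}$ and the symbols are equivalent to one of the form \eqref{forms-when-hermitian} with $A\in\R$, $C\in\R\setminus\{0\}$, $B\in\C$. Then the implication $(3)\Longrightarrow(1)$ of Theorem~\ref{thm-hermitian-maximal} immediately yields that $\bS_{\max}=\bS$ is hermitian, and nothing further is needed.

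For the necessity direction, assume $\bS$ is hermitian, i.e.\ $G(\bS)=G(\bS^*)$. The first step is to record the elementary fact that the $S$-adjoint is order-reversing: if $G(\A)\subseteq G(\B)$ then $G(\B^*)\subseteq G(\A^*)$, which is immediate from Definition~\ref{S-adjoint}, since a pair $(u,v)$ tested against every element of $G(\B)$ is in particular tested against every element of $G(\A)$. Applying this to the always-valid inclusion $G(\bS)\subseteq G(\bS_{\max})$ gives
$$
G(\bS_{\max}^*)\subseteq G(\bS^*)=G(\bS)\subseteq G(\bS_{\max}).
$$
Thus the hypothesis of Proposition~\ref{hermitian-nes} is met, and that proposition delivers condition (2): the symbols are equivalent to one of the form \eqref{forms-when-hermitian}. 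The same displayed inclusion is precisely hypothesis (2) of Theorem~\ref{thm-hermitian-maximal}, whose implication $(2)\Longrightarrow(1)$ shows that $\bS_{\max}$ itself is hermitian, i.e.\ $G(\bS_{\max})=G(\bS_{\max}^*)$.

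It remains to derive condition (1). Chaining the inclusions,
$$
G(\bS)\subseteq G(\bS_{\max})=G(\bS_{\max}^*)\subseteq G(\bS^*)=G(\bS),
$$
forces equality throughout, so $G(\bS)=G(\bS_{\max})$, that is $\bS=\bS_{\max}$. There is no genuine analytic obstacle here: the whole argument is a bookkeeping exercise with graph inclusions, and all the substantive work --- the symbol computation and the boundedness and closedness estimates --- has already been absorbed into Lemma~\ref{lemma-tech-her}, Proposition~\ref{hermitian-nes}, Theorem~\ref{formula-adjoint} and Theorem~\ref{thm-hermitian-maximal}. The only point demanding care is to invoke the order-reversal of the adjoint in the correct direction and to apply Proposition~\ref{hermitian-nes}, which is phrased for $\bS_{\max}$, only after the inclusion $G(\bS_{\max}^*)\subseteq G(\bS_{\max})$ has been established.
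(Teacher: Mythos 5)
Your proposal is correct and follows essentially the same route as the paper: sufficiency via Theorem~\ref{thm-hermitian-maximal}, and necessity by chaining the inclusions $G(\bS_{\max}^*)\subseteq G(\bS^*)=G(\bS)\subseteq G(\bS_{\max})$, invoking Proposition~\ref{hermitian-nes} and then Theorem~\ref{thm-hermitian-maximal} to force $G(\bS)=G(\bS_{\max})$. The only difference is that you state explicitly the order-reversing property of the adjoint, which the paper uses implicitly.
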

\begin{proof}
The sufficient condition follows directly from Theorem \ref{thm-hermitian-maximal}. To prove the necessary condition, we suppose that $\bS$ is hermitian, which means $G(\bS)=G(\bS^*)$. Since $G(\bS)\subset G(\bS_{\max})$, we have
$$
G(\bS_{\max}^*)\subset G(\bS^*)=G(\bS)\subset G(\bS_{\max}).
$$
By Proposition \ref{hermitian-nes}, we reach item (2) and so by Theorem \ref{thm-hermitian-maximal} the operator $\bS_{\max}$ is hermitian. Thus, item (1) follows from the following inclusions
$$
G(\bS)\subset G(\bS_{\max})=G(\bS_{\max}^*)\subset G(\bS^*)=G(\bS).
$$
\end{proof}

\begin{cor}
Let $m\in\Z_{\geq 0}$ and $\bS$ be a multi-valued weighted composition operator induced by equation \eqref{subspace-wco}, where $\psi,\varphi,\phi$ are entire functions with $\phi\not\equiv 0$. Suppose that $\psi$ nowhere vanishes. If the operator $\bS$ is hermitian, then it is $\calC_{a,b,c}$-selfadjoint.
\end{cor}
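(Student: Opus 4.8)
The plan is to read off the corollary from the two structure theorems already proved, with essentially no new computation. Suppose $\bS$ is hermitian. Theorem \ref{dfhafgs} then forces $\bS = \bS_{\max}$ and, after replacing $(\psi,\varphi,\phi)$ by an equivalent triple, guarantees that the symbols take the form \eqref{forms-when-hermitian}, i.e. $\varphi(u) = Au+B$ and $Ce^{u\overline{B}}\phi(u) = \psi(u)(Au+B)^m$ with $A\in\R$, $C\in\R\setminus\{0\}$ and $B\in\C$. On the other side, Theorem \ref{res-Cabc-sym} (equivalently Theorem \ref{thm-Cabc-self-maximal}) tells us that $\bS_{\max}$ is $\calC_{a,b,c}$-selfadjoint, for a triple $(a,b,c)$ satisfying \eqref{abc-cond}, exactly when the symbols are equivalent to one of the form \eqref{phi-psi-forms} with $D$ as in \eqref{D=}. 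So the entire task reduces to choosing $a,b,c$ obeying \eqref{abc-cond} for which \eqref{forms-when-hermitian} is a special case of \eqref{phi-psi-forms}.

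First I would set $b = 0$ and $c = 1$, which makes \eqref{abc-cond} reduce to $|a| = 1$ (the conditions $\overline{a}b+\overline{b} = 0$ and $|c|^2e^{|b|^2} = 1$ being then automatic); with $b = 0$ the factor $[a(Ay+B)+b]^m$ collapses to $a^m(Ay+B)^m$ and the prescribed exponent \eqref{D=} becomes $D = aB$. Comparing the ratio $\phi/\psi$ coming from \eqref{forms-when-hermitian}, namely $C^{-1}(Au+B)^m e^{-u\overline{B}}$, with the one coming from \eqref{phi-psi-forms} under these choices, I need the exponential factors to coincide and the constants to absorb into one another; this amounts to the single requirement $aB = \overline{B}$, the leading constant in \eqref{phi-psi-forms} then being $Ca^{m}$, which is nonzero since $C\in\R\setminus\{0\}$ and $|a^m| = 1$. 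The equation $aB = \overline{B}$ with $|a| = 1$ is solved by $a = \overline{B}/B$ if $B\neq 0$ and by $a = 1$ if $B = 0$. With this $(a,b,c)$ the hermitian triple is equivalent to a triple of the form \eqref{phi-psi-forms} satisfying \eqref{D=}.

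It then remains only to apply Theorem \ref{res-Cabc-sym}: its hypothesis (1) holds because $\bS = \bS_{\max}$, and its hypothesis (2) holds by the previous step, so $\bS$ is $\calC_{a,b,c}$-selfadjoint. There is no real obstacle here — the argument is a short chaining of Theorems \ref{dfhafgs} and \ref{res-Cabc-sym}. The only place needing a moment of care is the elementary matching of \eqref{forms-when-hermitian} against \eqref{phi-psi-forms}: solving $aB = \overline{B}$ on the unit circle, which creates the tiny case split $B = 0$ versus $B\neq 0$, and verifying that the induced leading constant stays nonzero. One may equivalently argue at the level of the defining identities \eqref{subspace-wco-her} and \eqref{subspace-wco-C-sym}, observing that with the chosen $a,b,c$ they coincide after dividing through by $a^m$.
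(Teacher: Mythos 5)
Your proposal is correct and follows the paper's own argument: the paper likewise deduces $\bS=\bS_{\max}$ and the form \eqref{forms-when-hermitian} from Theorem \ref{dfhafgs}, then invokes Theorem \ref{res-Cabc-sym} with exactly the choice $a=\overline{B}/B$, $b=0$, $c=1$ (and $a=1$, $b=0$, $c=1$ when $B=0$). Your only addition is to write out the routine matching of \eqref{forms-when-hermitian} against \eqref{phi-psi-forms} with \eqref{D=}, which the paper leaves implicit.
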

\begin{proof}
By Theorem \ref{dfhafgs}, the symbols are of forms in \eqref{forms-when-hermitian}, where $A\in\R$, $C\in\R\setminus\{0\}$, $B\in\C$. Hence, we can make use of Theorem \ref{res-Cabc-sym} by choosing
$$
a=\overline{B}/B,\quad b=0,\quad c=1,\quad\text{if $B\ne 0$}.
$$
or
$$
a=1,\quad b=0,\quad c=1.
$$
\end{proof}

\section{Unitary multi-valued weighted composition operators}
In this section we describe all weighted composition operators that are unitary on Fock space $\calF^2$. It turns out that this particular class consists only of single-valued operators.
\begin{thm}
Let $m\in\Z_{\geq 0}$ and $\bS_{\max}$ be a maximal multi-valued weighted composition operator induced by equation \eqref{subspace-wco}, where $\psi,\varphi,\phi$ are entire functions with $\phi\not\equiv 0$. Suppose that $\psi$ is zero free. The operator $\bS_{\max}$ is unitary if and only if $m=0$ and the symbols are of the following form:
$$
\varphi(z)=Az+B,\quad\psi(z)=De^{-A\overline{B}z-|B|^2/2},\quad z\in\C,
$$
where $A,B,D$ are complex constants with $|A|=1$ and $|D|=1$.
\end{thm}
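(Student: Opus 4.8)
The plan is to prove the two implications separately. Assume first that $\bS_{\max}$ is unitary, so $G(\bS_{\max}^{*})=G(\bS_{\max}^{-1})$; in particular $G(\bS_{\max}^{*})\subseteq G(\bS_{\max}^{-1})$. I would begin by ruling out $\varphi\equiv\text{const}$: if $\varphi\equiv c$, then for any $z_{1}\neq z_{2}$ with $\phi(z_{j})\neq0$, Proposition~\ref{basic-lemma} and linearity give $(u_{0},0)\in G(\bS_{\max}^{*})$ for the nonzero, non-polynomial element $u_{0}=\bigl(\overline{\phi(z_{1})}/\overline{\psi(z_{1})}\bigr)K_{z_{1}}^{[m]}-\bigl(\overline{\phi(z_{2})}/\overline{\psi(z_{2})}\bigr)K_{z_{2}}^{[m]}$; then $(0,u_{0})\in G(\bS_{\max})$, forcing $u_{0}\in\bS_{\max}(0)\subseteq\C_{m-1}[z]$ by Proposition~\ref{prop-must-multi}, a contradiction. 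Hence $\varphi\not\equiv\text{const}$, and Proposition~\ref{prop-unitary} provides an equivalent triple of symbols with $\phi\equiv1$ and $\varphi(z)=Az+B$, $A\neq0$; by Assumption~\eqref{rem-vary-impot} (and $\phi\equiv1$) the symbol $\psi$ remains entire and zero free.

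By Proposition~\ref{basic-lemma} we have $(\overline{\phi(z)}K_{z}^{[m]},\overline{\psi(z)}K_{\varphi(z)})\in G(\bS_{\max}^{*})\subseteq G(\bS_{\max}^{-1})$, so the pair $(\overline{\psi(z)}K_{\varphi(z)},\overline{\phi(z)}K_{z}^{[m]})$ lies in $G(\bS_{\max})$. Writing out equation~\eqref{subspace-wco} for this pair, using $\phi\equiv1$ and the Leibniz rule for $\bigl(x^{m}e^{x\overline z}\bigr)^{(m)}$, yields, with $\alpha_{j}=\binom{m}{j}m(m-1)\cdots(m-j+1)$,
\begin{equation*}
\psi(x)\,\overline{\psi(z)}\,e^{\varphi(x)\overline{\varphi(z)}}=\Bigl(\sum_{j=0}^{m}\alpha_{j}\,x^{m-j}\,\overline{z}^{m-j}\Bigr)e^{x\overline z},\qquad x,z\in\C .
\end{equation*}
Setting $z=0$ leaves only the $j=m$ term on the right and gives $\psi(x)=\dfrac{m!}{\overline{\psi(0)}}\,e^{-A\overline B x-|B|^{2}}$. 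Substituting this back, all $x$- and $z$-dependent exponential factors cancel except one, and with $w=x\overline z$ the identity collapses to
\begin{equation*}
\frac{(m!)^{2}}{|\psi(0)|^{2}}\,e^{-|B|^{2}}\,e^{(|A|^{2}-1)w}=\sum_{j=0}^{m}\alpha_{j}\,w^{m-j},\qquad w\in\C ,
\end{equation*}
an entire exponential equated to a polynomial of degree exactly $m$ (leading coefficient $\alpha_{0}=1$). This forces $|A|=1$ and $m=0$, hence $|\psi(0)|^{2}=e^{-|B|^{2}}$; writing $\psi(0)=De^{-|B|^{2}/2}$ with $|D|=1$ recovers $\psi(z)=De^{-A\overline B z-|B|^{2}/2}$, as asserted.

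For the converse, suppose $m=0$ and $\varphi(z)=Az+B$, $\psi(z)=De^{-A\overline B z-|B|^{2}/2}$ with $|A|=|D|=1$; taking $\phi\equiv1$, $\bS_{\max}$ is the single-valued operator $W_{\psi,\varphi}f=\psi\cdot(f\circ\varphi)$. The substitution $w=\varphi(z)$, using $|A|=1$ (so $|z|^{2}=|w-B|^{2}$, $dV(z)=dV(w)$) and $|\psi(z)|^{2}=e^{-2\,\re(\overline B w)+|B|^{2}}$, gives $\|\psi\cdot(f\circ\varphi)\|=\|f\|$ for every $f\in\calF^{2}$; thus $W_{\psi,\varphi}$ is everywhere defined and isometric, so $W_{\psi,\varphi}^{*}W_{\psi,\varphi}=I$. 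A one-line kernel computation, $W_{\psi,\varphi}W_{\psi,\varphi}^{*}K_{w}=\overline{\psi(w)}\,\psi\cdot(K_{\varphi(w)}\circ\varphi)=K_{w}$ (again using $|A|=|D|=1$), together with density of $\{K_{w}:w\in\C\}$, gives $W_{\psi,\varphi}W_{\psi,\varphi}^{*}=I$. Hence $W_{\psi,\varphi}$ is unitary, i.e. $G(\bS_{\max}^{*})=G(\bS_{\max}^{-1})$.

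The main obstacle is the bookkeeping in the second step: one must verify that, once the formula for $\psi$ is inserted into the kernel identity, every exponential factor collapses into $e^{(|A|^{2}-1)x\overline z}$, so that comparing an entire exponential with a polynomial of degree $m$ simultaneously forces $|A|=1$ and $m=0$. A secondary point requiring care is the elimination of the case $\varphi\equiv\text{const}$, which is not covered by Proposition~\ref{prop-unitary}.
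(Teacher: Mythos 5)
Your proof is correct, and on the necessity side it diverges from the paper's argument in a useful way. Both you and the paper start by invoking Proposition~\ref{prop-unitary} to reduce to $\phi\equiv 1$, $\varphi(z)=Az+B$, and both exploit the kernel identity \eqref{dkm-go} coming from $(\overline{\psi(z)}K_{\varphi(z)},\overline{\phi(z)}K_z^{[m]})\in G(\bS_{\max})$. But the paper then splits into the cases $B=0$ and $B\ne 0$, handling the latter by conjugating with the unitary weighted translation $Q$ of \cite{zhao2014unitary} (via the ``Claim'' that $(f,g)\in G(\widehat{\bS}_{\max}^*)$ iff $(f,Qg)\in G(\bS_{\max}^*)$) so as to reduce to $B=0$; you instead treat all $B$ at once by setting $z=0$ in the kernel identity, which pins down $\psi(x)=\frac{m!}{\overline{\psi(0)}}e^{-A\overline{B}x-|B|^2}$, and then substituting back so that the identity collapses to an exponential in $w=x\overline z$ equal to a degree-$m$ polynomial, forcing $|A|=1$ and $m=0$ simultaneously. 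This is shorter and avoids the unitary-equivalence bookkeeping, at the price of slightly heavier exponential cancellations (which you carried out correctly). You also do two things the paper does not: you explicitly rule out $\varphi\equiv\text{const}$ before invoking Proposition~\ref{prop-unitary} (a genuine, if small, gap in the paper, since that proposition assumes $\varphi\not\equiv\text{const}$ while the theorem does not), and you prove the sufficiency directly (isometry via change of variables plus $W_{\psi,\varphi}W_{\psi,\varphi}^*K_w=K_w$ on kernels) instead of citing \cite{zhao2014unitary}; both additions are sound, the only implicit convention being that, as in the paper, the stated symbols are understood with $\phi\equiv 1$ in the sufficiency direction.
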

\begin{proof}
The sufficiency follows from \cite{zhao2014unitary}. The necessity is proved as follows. Suppose that the operator $\bS$ is unitary. By Proposition \ref{prop-unitary}, the symbols $\phi$ and $\varphi$ are of the following forms
$$
\phi\equiv 1,\quad \varphi(z)=Az+B,\quad z\in\C.
$$
We consider two cases of $B$.
\bigskip

{\bf Case 1:} $B=0$. In this situation $\varphi(z)=Az$.

Let $z\in\C$. It follows from \eqref{dkm-go}, that
\begin{equation}\label{eq-friday-1}
\psi(x)\overline{\psi(z)} K_{\varphi(z)}(\varphi(x))=\sum_{j=0}^m\binom{m}{j}m(m-1)\cdots (m-j+1)x^{m-j}\overline{z}^{m-j}K_z(x)
\end{equation}
for all $x,z\in\C$. In particular with $z=0$, we get $\psi(x)\overline{\psi(0)}=m!$ and so $\psi$ is a constant function with modulus $(m!)^{1/2}$. Substituting back into equation \eqref{eq-friday-1}, we obtain
$$
(m!)e^{(|A|^2-1)x\overline{z}}=\sum_{j=0}^m\binom{m}{j}m(m-1)\cdots (m-j+1)x^{m-j}\overline{z}^{m-j}.
$$
Since the left-hand side is an exponential function and the right is a polynomial, this equality occurs if and only if $|A|=1$ and $m=0$.
\bigskip

{\bf Case 2:} $B\ne 0$.

Let $\widehat{\bS}_{\max}$ be the maximal multi-valued weighted composition operator induced by equation
$$
\widehat{\psi}(z)f(\widehat{\varphi}(z))=g^{(m)}(z),\quad z\in\C,
$$
where
$$
\widehat{\varphi}(z)=Az,\quad\widehat{\psi}(z)=\psi(z)e^{A\overline{B}z+|B|^2/2},\quad z\in\C.
$$
A direct computation shows that $(u,v)\in G(\widehat{\bS}_{\max})$ if and only if $(Qu,v)\in G(\bS_{\max})$, where
$$
Q:\calF^2\to\calF^2,\quad (Qf)(z)=e^{\overline{B}z-|B|^2/2}f(z-B),\quad f\in\calF^2.
$$
It was indicated in \cite[Proposition 2.3]{zhao2014unitary} the operator $Q$ is unitary. First, we state the following.
\bigskip

{\bf Claim:} $(f,g)\in G(\widehat{\bS}_{\max}^*)$ if and only if $(f,Qg)\in G(\bS_{\max}^*)$.

We give the proof for the implication $\Longrightarrow$ and omit the inverse implication as their arguments are similar. Let $(f,g)\in G(\widehat{\bS}_{\max}^*)$ and $(x,y)\in G(\bS_{\max})$. Then we have $(Q^*x,y)\in G(\widehat{\bS}_{\max})$ and hence by the definition of adjoint $\widehat{\bS}_{\max}^*$ we get
$$
\inner{f}{y}=\inner{g}{Q^*x}=\inner{Qg}{x}
$$
as wanted.

Next, we show that $\widehat{\bS}_{\max}$ is unitary. Indeed, by the claim,
\begin{eqnarray*}
(f,g)\in G(\widehat{\bS}_{\max}^*)
&\Leftrightarrow& (f,Qg)\in G(\bS_{\max}^*)=G(\bS_{\max}^{-1})\\
&\Leftrightarrow& (Qg,f)\in G(\bS_{\max})\Leftrightarrow (f,g)\in G(\widehat{\bS}_{\max}^{-1}).
\end{eqnarray*}
Now we can apply Case 1 to the operator $\widehat{\bS}_{\max}$ in order to get the desired conclusion.
\end{proof}

\section*{Acknowledgements}
The authors are indebted to the referee for insightful comments and criticism, which considerably improved a first version of this work.

\bibliographystyle{plain}
\bibliography{refs}
\end{document}